\newtheorem{theo}{Theorem}%[section]
\newtheorem{lemma}{Lemma}[section]
\newtheorem{ass}{Assumption}[section]
\newtheorem{rem}{Remark}[section]
\newtheorem{propo}{Proposition}[section]
\newtheorem{exam}{Example}[section]
\newtheorem{fact}{Fact}
\newcommand{\N}{\mathbb{N}}
\newcommand{\R}{\mathbb{R}}
\newcommand{\E}{\mathbb{E}}
\newcommand{\X}{\mathcal{X}}
\newcommand{\Y}{\mathcal{Y}}
\newcommand{\LipF}{L_F}
\newcommand{\Lips}{L_{\sigma}}
\title{Error analysis for stochastic gradient optimization schemes using modified equations}
\author[C.-E. Br\'ehier]{Charles-Edouard Br\'ehier}
\address{Universite de Pau et des Pays de l'Adour, E2S UPPA, CNRS, LMAP, Pau, France}
\email{charles-edouard.brehier@univ-pau.fr}
\author[M. Dambrine]{Marc Dambrine}
\address{Universite de Pau et des Pays de l'Adour, E2S UPPA, CNRS, LMAP, Pau, France}
\email{marc.dambrine@univ-pau.fr}
\author[N. En--Nebbazi]{Nassim En--Nebbazi}
\address{Universite de Pau et des Pays de l'Adour, E2S UPPA, CNRS, LMAP, Pau, France}
\email{nenebbazi@univ-pau.fr}
\subjclass{60H35;65C30;65K05;65K10}
\keywords{stochastic optimization schemes; modified equations; weak and strong error analysis}
\date{}
\begin{document}

\maketitle

\begin{abstract}
We consider a class of stochastic gradient optimization schemes. Assuming that the objective function is strongly convex, we prove weak error estimates which are uniform in time for the error between the solution of the numerical scheme, and the solutions of continuous-time modified (or high-resolution) differential equations at first and second orders, with respect to the time-step size. At first order, the modified equation is deterministic, whereas at second order the modified equation is stochastic and depends on a modified objective function. We go beyond existing results where the error estimates have been considered only on finite time intervals and were not uniform in time. This allows us to then provide a rigorous complexity analysis of the method in the large time and small time-step size regimes. We provide numerical experiments to illustrate the convergence results.
\end{abstract}

\section{Introduction}\label{sec:intro}

The analysis of numerical optimization methods, whether deterministic or stochastic, has been the subject of intense research activity. Those methods are widely applied in many situations, in particular recently in machine learning. In order to approximate minimizers of an objective function $F:\R^d\to\R$, the simplest method is the gradient descent algorithm
\[
x_{n+1}=x_n-h\nabla F(x_n),
\]
with a constant time-step size $h$. To study the asymptotic behavior of $x_N$ when $N\to\infty$, it is fruitful to make a connection with the continuous-time gradient dynamics
\[
x'(t)=-\nabla F(x(t)).
\]
In fact, the gradient descent algorithm can be interpreted as the standard explicit Euler scheme applied to the ordinary differential equation above with time-step size $h$.

Connections between (discrete-time) optimization schemes and continuous-time dynamics have been studied in the literature, in deterministic and stochastic settings, not only for gradient algorithms but also for other methods such as the Polyak's heavy-ball method \cite{polyak1964} and Nesterov's acceleration method \cite{nesterov1983}. The latter methods intend to accelerate the convergence to minimizers by introducing inertia in the scheme, and as a result the related continuous-time dynamics are second-order differential equations. In the last years, the celebrated work \cite{SuBoydCandes} on Nesterov's acceleration method has driven the attention of many researchers on that connection. Instead of considering leading order approximations, better understanding on the discrete-time dynamics can be obtained by the identification of more accurate continuous-time dynamics, which depend on the time-step size. In the numerical analysis community for ordinary and stochastic differential equations, such more accurate approximations are referred to as modified equations, see for instance~\cite{DHZ,GGK2,LiTaiE,Zygalakis}. In the optimization community, they are referred to as high-resolution equations, see for instance~\cite{DDPR,ShiDu,Shi2019}. In this article, we use the terminology modified equations.

The connection between the discrete and continuous time dynamics motivates the decomposition of the analysis of the asymptotic behavior of optimization schemes into two parts: the analysis of the asymptotic behavior of the continuous-time system, and the analysis of the approximation error between the discrete and continuous time systems. On the one hand, differential and stochastic calculus techniques can be applied to show convergence of the continuous-time dynamics when time $T$ goes to infinity, by the identification of so-called Lyapunov functionals. For instance, if the objective function $F$ is strongly convex, the square of the norm is a Lyapunov function, and applying Gr\"onwall's inequality it is straightforward to show that the solution of the continuous-time gradient dynamics converges exponentially fast to the unique minimizer of $F$. More complex Lyapunov functions dedicated to objective functions satisfying weaker conditions have been constructed in the literature. On the other hand, when standard numerical analysis techniques are applied to study the approximation error between the discrete and continuous time systems, usually the error bounds with respect to the time-step size $h$ depend on the final time $T=Nh$. The dependence obtained when discrete Gr\"onwall inequalities are employed may be exponential. Instead, it is desirable to obtain uniform in time error estimates, i.e. with upper bounds independent of the final time $T$. Combining the two types of error bounds then enables to analyze the complexity of the algorithm, i.e. to determine how to choose the parameters $h$ and $N$ of the algorithm to ensure that the error is below any arbitrary threshold.

In this article, we analyze the behavior of the following stochastic gradient optimization scheme
\begin{equation*}
\left\lbrace
\begin{aligned}
&X_{n+1}=X_n-h\nabla F(X_n)+h\sigma(t_n,X_n)\gamma_{n+1},\quad \forall~n\in\N,\\
&X_0=x_0,
\end{aligned}
\right.
\end{equation*}
where $F:\R^d\to\R$ is the objective function. Compared with the deterministic version above, the additional term $h\sigma(t_n,X_n)\gamma_{n+1}$ describes random perturbations, where $\bigl(\gamma_n\bigr)_{n\ge 1}$ are independent standard $\R^d$-valued Gaussian random variables. We assume that the objective function $F$ is strongly convex, it thus admits a unique minimizer denoted by $x^\star$. Our objective is to analyze the convergence of $X_N$ to $x^\star$ when $N\to\infty$ and $h\to 0$. The stochastic gradient optimization scheme above has been considered previously in the literature, see for instance~\cite{DDPR} for the proof of strong error estimates and in~\cite{LiTaiE} for the proof of weak error estimates. Let us recall that when dealing with approximation of stochastic systems and associated error bounds, one may consider either strong approximation and error bounds, concerned with $\E[\|X_N-x^\star\|]$ or with $\bigl(\E[\|X_N-x^\star\|^2]\bigr)^{1/2}$, or weak approximation and error bounds, concerned with $|\E[\varphi(X_N)]-\varphi(x^\star)|$. We refer for instance to the reference books \cite{kloeden1992,lord2014} on numerical methods for stochastic (partial) differential equations.

The objective of this article is to fill an important gap in the analysis of the stochastic gradient optimization scheme above: our main contribution is to establish uniform in time weak error estimates between the solution $X_N$ of the stochastic optimization scheme and the solution at time $T=Nh$ of associated modified equations which are either deterministic and stochastic. This is a major improvement compared with previous works such as~\cite{DDPR,LiTaiE} where the error bounds are not uniform with respect to time $T$ and are thus not relevant for the analysis of the algorithm in the large time regime. As explained above, combined with results on the large time behavior of the continuous-time associated deterministic and stochastic dynamics, we are able to establish complexity results for the stochastic gradient optimization scheme. We consider two modified equations, which provide first and second order approximation with respect to $h$.

For the first order approximation, we exploit the connection between the stochastic gradient optimization scheme and the deterministic gradient system, i.e. the ordinary differential equation
\begin{equation*}
\frac{d\X^0(t)}{dt}=-\nabla F(\X^0(t)),\quad \forall~t\ge 0.
\end{equation*}
As already recalled above, assuming that $F$ is strongly convex, $\X^0(T)$ converges to $x^\star$ exponentially fast when $T\to\infty$. Our first main result is the proof of weak error estimates
\[
|\E[\varphi(X_N)]-\varphi(x^\star)|={\rm O}\bigl(e^{-\mu Nh}+h\bigr),
\]
see Theorem~\ref{theo:1} for the precise statement. This follows from uniform in time weak error estimates
\[
\underset{N\in\N}\sup~|\E[\varphi(X_N)]-\varphi(\X^0(Nh))|={\rm O}(h).
\]
Moreover, applying the weak error estimates with $\varphi=F$ and noting that the residual $F(X_N)-F(x^\star)$ dominates $\|X_N-x^\star\|^2$, we also obtain strong error estimates
\[
\bigl(\E[\|X_N-x^\star\|^2]\bigr)^{\frac12}={\rm O}\bigl(e^{-\mu Nh}+h^{\frac12}\bigr).
\]
This is not a standard result in stochastic numerics, since usually weak error bounds may deduced from strong error bounds. To prove weak error estimates, which are uniform in time, we revisit the arguments usually employed to study approximations of invariant distributions for stochastic differential equations, in particular we need to study the regularity properties of an auxiliary mapping $u^0$ (defined by Equation~\eqref{eq:u}), which is the solution to the (Kolmogorov) partial differential equation~\eqref{eq:Kolmogorov_u} associated with the deterministic gradient system. Precisely we prove the exponential decay with respect to $t$ for the first and second order spatial partial derivatives $\nabla_xu^0(t,x)$ and $\nabla_x^2u^0(t,x)$, see Lemma~\ref{lem:u}.

The considered stochastic gradient optimization scheme can be interpreted as the Euler--Maruyama scheme applied to the stochastic differential equation
\[
d\X^h(t)=-\nabla F(\X^h(t))dt+\sqrt{h}\sigma(t,\X^h(t))dB(t),\quad \forall~t\ge 0,
\]
which can be interpreted as a modified equation since it depends on the time-step size $h$. However that simple connection is not sufficient to obtain higher order approximation with respect to $h$. Instead, one needs to consider the following modified stochastic differential equation
\begin{equation*}
d\Y^h(t)=-\nabla F^h(\Y^h(t))dt+\sqrt{h}\sigma(t,\Y^h(t))dB(t),\quad \forall~t\ge 0,
\end{equation*}
which has the same diffusion coefficient $\sqrt{h}\sigma$ but depends on the modified objective function \[
F^h=F+\frac{h}{4}\|\nabla F\|^2.
\]
The modified stochastic differential equation and objective function above have been identified in the recent articles~\cite{DDPR,LiTaiE}. We refer to~\cite{DHZ,MR4084147,GGK2,MR4267491} for other examples of applications of modified/high-resolution equations in the context of deterministic and stochastic optimization. In this article, we prove uniform in time weak error estimates
\[
\underset{N\in\N}\sup~|\E[\varphi(X_N)]-\E[\varphi(\Y^h(Nh))]|={\rm O}(h^2),
\]
under stronger conditions on the modified objective function $F$: we assume that the modified objective function $F^h$ is strongly convex uniformly with respect to $h$ (in an interval $(0,h_0)$). We refer to Assumption~\ref{ass:F2} for a rigorous statement. To the best of our knowledge, this condition has not been exhibited before in the literature. That assumption plays a crucial role in the analysis. First, it allows us to prove the exponential decay with respect to $t$ of spatial derivatives of the solutions $v_T^h$ (defined by Equation~\eqref{eq:vh}) to associated (backward) Kolmogorov equations~\eqref{eq:Kolmogorov_vh}, uniformly with respect to $h$. In addition, it allows us to prove error bounds for $\E[\|\Y^h(T)-x^\star\|^2]$ in the large time regime.

Our second main result is thus the proof of weak error estimates
\begin{equation*}
\big|\E[\varphi(X_N)]-\varphi(x^\star)\big|={\rm O}\left(e^{-\mu Nh}+\sqrt{h} \sqrt{e^{-2\mu Nh}\rho(Nh)}+h^2\right),
\end{equation*}
where the mapping $\rho$ is defined by~\eqref{eq:rhoT} and depends on the diffusion coefficient $\sigma$. We refer to Theorem~\ref{theo:2} for a precise statement. Like for the first order approximation discussed above, one obtains strong error estimates
\[
\bigl(\E[\|X_N-x^\star\|^2]\bigr)^{\frac12}={\rm O}\left(e^{-\mu Nh}+\sqrt{h} \sqrt{e^{-2\mu Nh}\rho(Nh)}+h\right).
\]
It is worth mentioning that the mapping $T\mapsto e^{-2\mu T}\rho(T)$ is bounded, and thus the strong error bounds obtained using the second order (stochastic) approximation are always at least as good as the ones obtained using the first order (stochastic) approximation. It remains to compare the two results in terms of complexity, depending on the large time behavior of the diffusion coefficient, to see whether considering the second order approximation leads to improved error bounds.

We provide numerical experiments to illustrate the strong error estimates obtained in Theorems~\ref{theo:1} and~\ref{theo:2}. We observe the strong error estimates obtained in the second main result are sharp since the three error terms appearing on the right-hand side are visible in some regimes.

The comparison between the first and second order approximation results in the large time regime is not as straightforward as it may seem. Indeed, to obtain the first order approximation result it is sufficient to assume that some upper bounds on $\sigma(t,\cdot)$ which are uniform with respect to $t$. It turns out that the second order approximation result reduces the complexity only if $\sigma(t,\cdot)$ converges to $0$ when $t$ goes to infinity fast enough. We refer to Section~\ref{sec:results} for a precise discussion on the computational complexity of the stochastic gradient optimization scheme and a comparison of the two results.

Let us mention several directions for future works. It would be interesting to study the behavior of the stochastic gradient optimization scheme under weaker conditions on the objective function $F$. One may also investigate the behavior of other stochastic optimization schemes, such as stochastic versions of the celebrated heavy ball and Nesterov algorithms. The analysis may also be extended in an infinite dimensional setting.

The article is organized as follows. Section~\ref{sec:setting} is devoted to the description of the setting and the statement of the fundamental assumptions on the objective function $F$ and the diffusion coefficient $\sigma$. The main results of this article, Theorem~\ref{theo:1} for the first order approximation, and Theorem~\ref{theo:2}  for the second order approximation, are stated in Section~\ref{sec:results}, which also contains numerical experiments and a discussion on the computational complexity. Section~\ref{sec:aux} presents some results on the stochastic gradient optimization scheme, including uniform moment bounds given by Lemma~\ref{lem:momentbounds-scheme}. The proof of the first order approximation result, Theorem~\ref{theo:1}, is provided in Section~\ref{sec:error1}, and the proof of the second order approximation result, Theorem~\ref{theo:2}, is provided in Section~\ref{sec:error2}. Appendix~\ref{sec:app} contains the proof of technical but fundamental results, on the regularity properties of the solutions of the Kolmogorov equations associated with the first and second order modified equations.

\section{Setting}\label{sec:setting}

\subsection{Notation}\label{sec:setting_notation}

We denote by $\mathbb{N}^\star=\{1,\ldots\}$ and $\mathbb{N}=\{0,1,\ldots\}$ the sets of positive and nonnegative integers respectively. Given $d\in\mathbb{N}^\star$ be an arbitrary positive integer, the Euclidean space $\R^d$ is equipped with the standard inner product and norm denoted by $\langle \cdot,\cdot \rangle$ and $\|\cdot\|$ respectively. Let $\mathcal{M}_d(\R)$ denote the space of real-valued square matrices with $d$ rows and columns. The identity matrix is denoted by $I_d$. For any matrix $M\in\mathcal{M}_d(\R)$, $M^\star$ is the transpose of $M$ and ${\rm Tr}(M)$ is the trace of $M$. The space $\mathcal{M}_d(\R)$ is equipped with the norm $\|\cdot\|$ defined by
\[
\|M\|=\bigl(\sum_{1\le i,j\le d}|M_{i,j}|^2\bigr)^{\frac12},\quad \forall~M\in\mathcal{M}_d(\R).
\]
If $M,N\in\mathcal{M}_d(\R)$ are two matrices, let
\[
M:N=\sum_{1\le i,j\le d}M_{i,j}N_{i,j}={\rm Tr}\bigl(MN^\star\bigr).
\]

If $\phi:\R^d\to\R$ is a real-valued function of class $\mathcal{C}^1$, then $\nabla \phi(x)=\bigl(\partial_{x_i}\phi(x)\bigr)_{1\le i\le d}\in\R^d$ denotes the gradient of $\phi$ at $x\in\R^d$. Similarly, if $\phi:\R^d\to\R$ is a function of class $\mathcal{C}^2$, then $\nabla^2 \phi(x)=\bigl(\partial_{x_i}\partial_{x_j}\phi(x)\bigr)_{1\le i,j\le d}\in\mathcal{M}_d(\R)$ denotes the Hessian matrix of $\phi$ at $x\in\R^d$.

Given $d_1,d_2,m\in\N^\star$, if $\phi:\R^{d_1}\to\R^{d_2}$ is a mapping of class $\mathcal{C}^m$, then $D^m\phi(x).(k_1,\ldots,k_m)\in\R^{d_2}$ denotes the differential of order $m$ of $\phi$ at $x\in\R^{d_1}$ in the directions $k_1,\ldots,k_m\in\R^{d_1}$. In the case $d_1=d$ and $d_2=1$, if $\varphi:\R^d\to\R$ is of class $\mathcal{C}^2$, with the notation introduced above one has
\[
D\varphi(x).(k)=\langle \nabla\varphi(x),k\rangle,\quad D^2\varphi(x).(k_1,k_2)=\langle \nabla^2\varphi(x)k_1,k_2\rangle. 
\]
For any $m\in\N^\star$, if $\varphi:\R^d\to\R$ is of class $\mathcal{C}^m$, then set
\[
\vvvert\varphi\vvvert_m=\underset{x\in\R^d}\sup~\underset{k_1,\ldots,k_m\in\R^d\setminus\{0\}}\sup~\frac{|D^m\varphi(x).(k_1,\ldots,k_m)|}{\|k_1\|\ldots\|k_m\|}.
\]
In the sequel we consider functions $\varphi$ of class $\mathcal{C}^2$ with bounded second order derivative. Note that its first order derivative of such functions satisfies the following upper bound: for all $x\in\R^d$ and $k\in\R^d$ one has
\begin{equation}\label{eq:ineqDvarphi}
|D\varphi(x).k|\le \Bigl(\|\nabla \varphi(x^\star)\|+\vvvert\varphi\vvvert_2\|x-x^\star\|\Bigr)\|k\|.
\end{equation}

Finally, let $\bigl(B(t)\bigr)_{t\ge 0}$ be a standard $\R^d$-valued Wiener process defined on a probability space denoted by $(\Omega,\mathcal{F},\mathbb{P})$, and which is adapted to a filtration $\bigl(\mathcal{F}_t\bigr)_{t\ge 0}$ which satisfies the usual conditions. If $X$ is an integrable real-valued random variable, its expectation is denoted by $\E[X]$. In addition, given a $\sigma$-field $\mathcal{G}\subset\mathcal{F}$, $\E[X|\mathcal{G}]$ denotes the conditional expectation of $X$ given $\mathcal{G}$.

In the analysis, the value of (deterministic) positive real numbers $C\in(0,\infty)$ may vary from line to line. Unless specified, the value of $C$ is uniform with respect to variables such as time $t\in\R^+=[0,\infty)$, the state $x\in\R^d$ or the time-step size $h$.

\subsection{The considered stochastic gradient optimization scheme}\label{sec:setting_scheme}

Let $x_0\in\R^d$ be an arbitrary initial value assumed to be deterministic. Considering $\mathcal{F}_0$-measurable random initial values satisfying appropriate integrability properties could be considered with straightforward minor modifications of the statements.

Let $h>0$ denote the time-step size of the considered stochastic gradient optimization scheme. Without loss of generality, it is assumed that $h\in(0,h_{\rm max})$ for some positive real number $h_{\rm max}$. For any nonnegative integer $n\in\N$, set $t_n=nh$. Let $\bigl(\gamma_n\bigr)_{n\in\N^\star}$ be a sequence of independent standard centered $\R^d$-valued Gaussian random variables.

The objective of this article is to study the behavior of the stochastic gradient optimization scheme 
\begin{equation}\label{eq:scheme}
\left\lbrace
\begin{aligned}
&X_{n+1}=X_n-h\nabla F(X_n)+h\sigma(t_n,X_n)\gamma_{n+1},\quad \forall~n\in\N,\\
&X_0=x_0,
\end{aligned}
\right.
\end{equation}
where the mapping $F:\R^d\to\R$ is the objective function and the mapping $\sigma:[0,\infty)\times\R^d\to \mathcal{M}_d(\R)$ is the diffusion coefficient. We refer to Section~\ref{sec:setting_assumptions} for precise assumptions.

Let $\Delta B_{n+1}=B(t_{n+1})-B(t_n)$ denote the increments of the Wiener process on the interval $[t_n,t_{n+1}]$. Recall that $\bigl(h^{-\frac12}\Delta B_{n+1}\bigr)_{n\ge 0}$ are independent standard centered Gaussian $\R^d$-valued random variables. Thus the sequences of random variables $\bigl(\sqrt{h}\gamma_n\bigr)_{n\in\N^\star}$ and $\bigl(\Delta B_n\bigr)_{n\in\N^\star}$ are equal in distribution, and as a result, the scheme~\eqref{eq:scheme} can be interpreted using the alternative formulation
\begin{equation}\label{eq:schemebis}
\left\lbrace
\begin{aligned}
&X_{n+1}=X_n-h\nabla F(X_n)+\sqrt{h}\sigma(t_n,X_n)\Delta B_{n+1},\quad \forall~n\in\N,\\
&X_0=x_0.
\end{aligned}
\right.
\end{equation}
The formulation~\eqref{eq:schemebis} suggests to interpret the scheme as a numerical method applied to a stochastic differential equation, as will be explained below.

\subsection{Assumptions}\label{sec:setting_assumptions}

This section provides basic assumptions on the objective function $F$ and on the diffusion coefficient $\sigma$, which are necessary to justify rigorously the main properties of the stochastic gradient optimization scheme in the large time and small time-step regime. More precisely, Assumptions~\ref{ass:F1} and~\ref{ass:sigma1} are required to prove the first main result of this paper, stated in Theorem~\ref{theo:1}. Stronger additional conditions are imposed in Assumptions~\ref{ass:F2} and~\ref{ass:sigma2} below in order to prove the second main result of this paper, stated in Theorem~\ref{theo:2}, we refer to Section~\ref{sec:setting_order2} for their statements.

Let us first state the basic assumptions on the objective function $F$.
\begin{ass}[Basic assumptions on the objective function $F$]\label{ass:F1}
The mapping $F:\R^d\to\R$ is of class $\mathcal{C}^3$, and it satisfies the following properties:
\begin{enumerate}[label=(\roman*)]
\item $\nabla F:\R^d\to\R^d$ is globally Lipschitz continuous, i.e. there exists a positive real number $\LipF \in(0,\infty)$ such that one has
\begin{equation}\label{eq:assF1-Lip}
\|\nabla F(x_2)-\nabla F(x_1)\|\le\LipF \|x_2-x_1\|,\quad \forall~x_1,x_2\in\R^d.
\end{equation}
\item $F$ is a $\mu$-convex function, i.e. there exists a positive real number $\mu\in(0,\infty)$ such that one has
\begin{equation}\label{eq:assF1-muconvex}
\langle \nabla F(x_2)-\nabla F(x_1),x_2-x_1\rangle \ge \mu\|x_2-x_1\|^2,\quad \forall~x_1,x_2\in\R^d.
\end{equation}
\item The third order derivative of $F$ is bounded.
\end{enumerate}
\end{ass}

\begin{exam}
Assumption~\ref{ass:F1} is satisfied if $F$ is defined as
\[
F(x)=\|x\|^2+\epsilon G(x),\quad \forall~x\in\R^d,
\]
where the mapping $G:\R^d\to\R$ is of class $\mathcal{C}^3$ with bounded first, second and third order derivatives, and for sufficiently small $\epsilon$.
\end{exam}

Let us state several elementary consequences of Assumption~\ref{ass:F1}.

When the objective function $F$ satisfies Assumption~\ref{ass:F1}, it is straightforward to check that $\min(F)$ exists and is attained for a unique minimizer denoted by $x^\star$. Moreover the minimizer $x^\star$ is characterized by the first-order optimality condition $\nabla F(x^\star)=0$.

Note that the second order derivative $\nabla^2 F$ satisfies the property
\begin{equation}\label{eq:assF1-muconvex-bis}
\mu\|k\|^2 \leq \langle \nabla^2 F(x)k,k\rangle \le \LipF \|k\|^2,\quad \forall~k\in\R^d,
\end{equation}
where the upper bound follows from the Lipschitz continuity~\eqref{eq:assF1-Lip} of $\nabla F$ and the lower bound is a consequence of the $\mu$-convexity property~\eqref{eq:assF1-muconvex}. In addition, the global Lipschitz continuity condition~\eqref{eq:assF1-Lip} implies that $\nabla F$ has at most polynomial growth:
\begin{equation}\label{eq:assF1-growth}
\|\nabla F(x)\|\le \LipF \|x-x^\star\|,\quad \forall~x\in\R^d.
\end{equation}

Finally, note that combining the $\mu$-convexity condition~\eqref{eq:assF1-muconvex} and the optimality condition $\nabla F(x^\star)=0$, a second-order Taylor expansion yields the following lower bound on the residual 
\begin{equation}\label{eq:residual_low}
F(x)-F(x^\star)\ge \frac{\mu}{2}\|x-x^\star\|^2,\quad \forall~x\in\R^d.
\end{equation}
Similarly, one has an upper bound on the residual: 
\begin{equation}\label{eq:residual_up}
F(x)-F(x^\star)\le \frac{\LipF}{2}\|x-x^\star\|^2,\quad \forall~x\in\R^d.
\end{equation}

Let us now describe the basic assumptions on the diffusion coefficient $\sigma$.

\begin{ass}[Assumptions on the diffusion coefficient $\sigma$]\label{ass:sigma1}
The mapping $\sigma:[0,\infty)\times\R^d\to\mathcal{M}_d(\R)$ satisfies the following properties:

\begin{enumerate}[label=(\roman*)]
\item The mapping $\sigma:[0,\infty)\times\R^d\to \mathcal{M}_d(\R)$ is continuous.  
\item For all $t\in[0,\infty)$, the mapping $\sigma(t,\cdot):\R^d\to\mathcal{M}_d(\R)$ is globally Lipschitz continuous and grows at most linearly : for all $t\ge 0$, there exists $\varsigma(t)\in[0,\infty)$ such that one has
\begin{align}
&\|\sigma(t,x_2)-\sigma(t,x_1)\|\le \varsigma(t)\|x_2-x_1\|,\quad \forall~x_1,x_2\in\R^d,\label{eq:asssigma1Lip}\\
&\underset{x\in\R^d}\sup~\frac{\|\sigma(t,x)\|}{1+\|x-x^\star\|}\le \varsigma(t).\label{eq:asssigma1growth}
\end{align}
\item The mapping $\varsigma:[0,\infty)\to[0,\infty)$ is assumed to be bounded. Let $\|\varsigma\|_\infty=\underset{t\ge 0}\sup~|\varsigma(t)|$.
\end{enumerate}
\end{ass}

Finally, let us introduce the auxiliary mapping $a:[0,\infty)\times\R^d\to\mathcal{M}_d(\R)$ defined by
\begin{equation}\label{eq:a}
a(t,x)=\sigma(t,x)\sigma(t,x)^\star,\quad \forall~t\ge 0, x\in\R^d.
\end{equation}

\subsection{First order approximate ordinary and stochastic differential equations}\label{sec:setting_order1}

In this subsection, we consider that the basic assumptions on $F$ and $\sigma$, i.e. Assumptions~\ref{ass:F1} and~\ref{ass:sigma1} respectively, are satisfied.

Let us first study the deterministic situation: when $\sigma=0$, the scheme~\eqref{eq:schemebis} is the explicit Euler scheme applied to the deterministic gradient system
\begin{equation}\label{eq:ODE1}
\left\lbrace
\begin{aligned}
&\frac{d\X^0(t)}{dt}=-\nabla F(\X^0(t)),\quad \forall~t\ge 0,\\
&\X^0(0)=x_0.
\end{aligned}
\right.
\end{equation} 
The analysis of the gradient system~\eqref{eq:ODE1} under Assumption~\ref{ass:F1} is straightforward and well-known. Owing to the $\mu$-convexity condition~\eqref{eq:assF1-muconvex} from Assumption~\ref{ass:F1} on the objective function $F$, and due to the first-order optimality condition $\nabla F(x^\star)=0$, one obtains for all $t\ge 0$
\begin{align*}
\frac12\frac{d\|\X^0(t)-x^\star\|^2}{dt}&=\langle \frac{d(\X^0(t)-x^\star)}{dt},\X^0(t)-x^\star\rangle=-\langle \nabla F(\X^0(t))-\nabla F(x^\star),\X^0(t)-x^\star\rangle \\&\le -\mu\|\X^0(t)-x^\star\|^2.
\end{align*}
Therefore, applying the Gr\"onwall lemma, for all $t\ge 0$ one obtains
\begin{equation}\label{eq:cvX0}
\|\X^0(t)-x^\star\|\le e^{-\mu t}\|x_0-x^\star\|.
\end{equation}
This means that, for any initial value $x_0$, the solution $\X^0(t)$ at time $t$ of the deterministic gradient system~\eqref{eq:ODE1} converges exponentially fast to the unique minimizer $x^\star$ of the objective function $F$ in the large time regime $t\to\infty$, and that the rate of convergence is given by the parameter $\mu$.

Let us now take into account the stochastic perturbation. The stochastic gradient optimization scheme~\eqref{eq:scheme}, using the equivalent formulation~\eqref{eq:schemebis}, can be interpreted as the Euler--Maruyama scheme with time-step size $h$ applied to the stochastic differential equation
\begin{equation}\label{eq:SDE1}
\left\lbrace
\begin{aligned}
&d\X^h(t)=-\nabla F(\X^h(t))dt+\sqrt{h}\sigma(t,\X^h(t))dB(t),\quad \forall~t\ge 0,\\
&\X^h(0)=x_0.
\end{aligned}
\right.
\end{equation}
The noise in the stochastic differential equation~\eqref{eq:SDE1} is interpreted in the It\^o sense. Observe that the equation~\eqref{eq:SDE1} depends on the time-step size $h$, the diffusion coefficient is $\sqrt{h}\sigma$. Let us mention that the stochastic differential equation~\eqref{eq:SDE1} is globally well-posed, since the mappings $\nabla F$ and $\sigma(t,\cdot)$ satisfy appropriate globally Lipschitz continuity properties, given in Assumptions~\ref{ass:F1} and~\ref{ass:sigma1}: for any initial value $x_0\in\R^d$, there exists a unique global solution to the stochastic differential equation~\eqref{eq:SDE1}.

The deterministic system~\eqref{eq:ODE1} and the stochastic system~\eqref{eq:SDE1} are first-order approximations, in the sense of weak convergence, of the stochastic gradient optimization scheme~\eqref{eq:scheme} in the small time-step size limit $h\rightarrow 0$, for arbitrary fixed value of $T\in(0,\infty)$. More precisely, for any time $T\in(0,\infty)$ and any sufficiently smooth function $\varphi:\R^d\to\R$, there exists $C(T,x_0,\varphi)\in(0,\infty)$ such that for all $h\in(0,h_{\rm max})$ one has
\[
\underset{nh\le T}\sup~|\E[\varphi(X_n)]-\E[\varphi(\X^0(nh))]|+\underset{nh\le T}\sup~|\E[\varphi(X_n)]-\E[\varphi(\X^h(nh))]|\le C(T,x_0,\varphi)h.
\]

One of the objectives of this work is to show that under appropriate conditions the weak error estimates are uniform with respect to $T\in(0,\infty)$, which is of tremendous importance in the context of optimization where one needs to consider the large time regime $T\to \infty$. In other words, it will be proved that the weak error estimates above hold, with $\underset{T\in(0,\infty)}\sup~C(T,x_0,\varphi)<\infty$, we refer to Theorem~\ref{theo:1} stated in Section~\ref{sec:results} for a precise statement.

Note that in order to understand the behavior of the numerical scheme~\eqref{eq:scheme} considering the stochastic differential equation~\eqref{eq:SDE1} instead of the deterministic gradient system~\eqref{eq:ODE1} is not relevant since the weak order of convergence is equal to $1$ even when the noise is introduced. The objective of the next section is to introduce a stochastic differential equation depending on a modified objective function $F^h$, depending on the time-step size $h$, in order to obtain a second order weak approximation. As already mentioned, stronger conditions will be imposed on $F$ and $\sigma$ to prove this result.

\subsection{Second order approximate stochastic differential equation}\label{sec:setting_order2}

Following the works~\cite{LiTaiE,DDPR}, instead of considering the stochastic differential equation~\eqref{eq:SDE1}, we introduce the family of stochastic differential equations depending on the time-step size $h$ defined by
\begin{equation}\label{eq:SDE2}
\left\lbrace
\begin{aligned}
&d\Y^h(t)=-\nabla\bigl(F+\frac{h}{4}\|\nabla F\|^2\bigr)(\Y^h(t))dt+\sqrt{h}\sigma(t,\Y^h(t))dB(t),\quad \forall~t\ge 0,\\
&\Y^h(0)=x_0.
\end{aligned}
\right.
\end{equation}
This family provides second order weak approximation for the stochastic gradient optimization scheme~\eqref{eq:scheme}: for any time $T\in(0,\infty)$ and any sufficiently smooth function $\varphi:\R^d\to\R$, there exists $C(T,x_0,\varphi)\in(0,\infty)$ such that one has
\[
\underset{nh\le T}\sup~|\E[\varphi(X_n)]-\E[\varphi(\Y^h(nh))]|\le C(T,x_0,\varphi)h^2.
\]
Like for the first order approximations described in Section~\ref{sec:setting_order1}, the main objective of this work is to show that under appropriate conditions the weak error estimates are uniform with respect to $T\in(0,\infty)$. We refer to Theorem~\ref{theo:2} stated in Section~\ref{sec:results} for a precise statement.

Note that the stochastic differential equations~\eqref{eq:SDE1} and~\eqref{eq:SDE2} have the same diffusion coefficient $\sqrt{h}\sigma$, but they do not have the same drift: the drift in~\eqref{eq:SDE1} is $-\nabla F$ where $F$ is the objective function, whereas the drift in~\eqref{eq:SDE2} is given by $-\nabla F^h$ where $F^h:\R^d\to\R$ is the so-called modified objective function defined by
\begin{equation}\label{eq:Fh}
F^h(x)= F(x)+\frac{h}{4}\|\nabla F(x)\|^2,\quad \forall~x\in\R^d.
\end{equation}
It is worth mentioning that $x^\star$ is the unique minimizer of $F^h$ for any time-step size $h$. Indeed, one has $F^h(x^\star)=F(x^\star)$ and $F^h(x)>F(x)$ for all $x\in\R^d\setminus\{x^\star\}$.

Following from straightforward computations, note that $\nabla \|\nabla F\|^2(x^\star)=0$, and that there exists a positive real number $C \in (0, \infty)$ such that for all $x \in \mathbb{R}^d$ one has
\begin{equation}
   \|\nabla \|\nabla F\|^2(x)\|\le C\|x-x^\star\|, \hspace{0.6cm}
   \|\nabla^2 \|\nabla F\|^2(x)\|\le C(1+\|x-x^\star\|).\label{eq:nablaFh}
\end{equation}
As a consequence, there exists a positive real number $L_{F,2}\in(0,\infty)$ such that, for all $x_1,x_2\in\R^d$, one has
\begin{equation}\label{eq:assF2-LocLip}
\|\nabla \|\nabla F\|^2(x_2)-\nabla \|\nabla F\|^2(x_1)\|\le L_{F,2}(1+\|x_1-x^\star\|+\|x_2-x^\star\|)\|x_2-x_1\|.
\end{equation}

The conditions imposed on $F$ in Assumption~\ref{ass:F1} are not sufficient for the analysis, additional conditions are given in Assumption~\ref{ass:F2} below. Note that as usual higher order regularity conditions on $F$ and $\sigma$ are needed to justify the second order weak approximation result stated above, compared with the first order weak approximation result. A more restrictive condition also needs to be imposed: the mapping $F^h$ is assumed to be $\mu$-convex for any (sufficiently small) time-step size $h$, and the positive real number $\mu$ is independent of $h$. This assumption is non trivial and is discussed below. To the best of our knowledge, this type of condition has not been presented so far in the literature.

\begin{ass}[Strengthened assumptions on the objective function $F$]\label{ass:F2}

Assume that the objective function $F$ satisfies the basic Assumption~\ref{ass:F1}.

Furthermore, assume that the mapping $F:\R^d\to\R$ is of class $\mathcal{C}^5$, and that it satisfies the following properties:
\begin{enumerate}[label=(\roman*)]
\item There exist positive real numbers $h_0\in(0,h_{\rm max})$ and $\mu\in(0,\infty)$, such that for all $h\in(0,h_0)$ the mapping $F^h$ is $\mu$-convex:
\begin{equation}\label{eq:assF2-muconvex}
\langle \nabla F^h(x_2)-\nabla F^h(x_1),x_2-x_1\rangle \ge \mu\|x_2-x_1\|^2,\quad \forall~x_1,x_2\in\R^d,~\forall~h\in(0,h_0).
\end{equation}
\item The fourth and fifth order derivatives of $F$ are bounded.
\end{enumerate}
\end{ass}
Let us give an example of a function such that Assumption~\ref{ass:F2} is satisfied.
\begin{exam}
Let $F$ be given by $F(x)=\|x\|^2+\epsilon G(x)$ for all $x\in\R^d$, where $G:\R^d\to\R$ is a mapping of class $\mathcal{C}^5$ with bounded derivatives of any order and $\epsilon$ is chosen to ensure that the $\mu$-convexity condition from Assumption~\ref{ass:F1} is satisfied. Then Assumption~\ref{ass:F2} is satisfied if and only if there exists $c_F\in(0,\infty)$ such that
\[
\underset{x\in\R^d}\inf~D^3G(x).(x,k,k)\ge -c_F\|k\|^2, \forall~k\in\R^d,
\]
is satisfied.
\end{exam}

\begin{rem}
The mapping $-\nabla F^h:\R^d\to\R^d$ is only locally Lipschitz continuous, owing to the inequality~\eqref{eq:assF2-LocLip}, following from the basic Assumption~\ref{ass:F1}: there exists $C\in(0,\infty)$ such that for all $x_1,x_2\in\R^d$ one has
\begin{equation}\label{eq:localLipFh}
\|\nabla F^h(x_2)-\nabla F^h(x_1)\|\le (\LipF+hL_{F,2})(1+\|x_1-x^\star\|+\|x_2-x^\star\|)\|x_2-x_1\|.
\end{equation}
This local Lipschitz continuity property ensures local well-posedness of the stochastic differential equation~\eqref{eq:SDE2}. Owing to the strengthened Assumption~\ref{ass:F2}, $F^h$ satisfies the $\mu$-convexity condition~\eqref{eq:assF2-muconvex}, which implies that $-\nabla F^h$ satisfies a one-sided Lipschitz continuity property. This provides the global well-posedness of~\eqref{eq:SDE2} by standard arguments. Moreover, the $\mu$-convexity condition~\eqref{eq:assF2-muconvex} is employed to prove moment bounds on the solution of the stochastic differential equation~\eqref{eq:SDE2}, see Lemma~\ref{lem:momentboundsYh} in Section~\ref{sec:proof2}, which are uniform with respect to $t\ge 0$.
\end{rem}

Finally, let us describe additional regularity conditions imposed on the diffusion coefficient $\sigma$. These assumptions are less restrictive than those imposed on $F$ in Assumption~\ref{ass:F2} above.

\begin{ass}[Strengthened assumptions on the diffusion coefficient $\sigma$]\label{ass:sigma2}
    Assume that the diffusion coefficient $\sigma$ satisfies the basic Assumption~\ref{ass:sigma1}.
    
    Furthermore, assume that for all $t\in[0,\infty)$ the mapping $\sigma(t,\cdot):\R^d\to\mathcal{M}_d(\R)$ is of class $\mathcal{C}^3$. Moreover, the second and the third derivatives of $\sigma(t,\cdot)$ are assumed to be bounded, uniformly with respect to $t\in[0,\infty)$: one has
    \[
    \underset{t\in[0,\infty)}\sup~\underset{x\in\R^d}\sup~\underset{k_1,k_2\in\R^d\setminus\{0\}}\sup~\frac{\|D^2\sigma(t,x).(k_1,k_2)\|}{\|k_1\|\|k_2\|}+\underset{t\in[0,\infty)}\sup~\underset{x\in\R^d}\sup~\underset{k_1,k_2,k_3\in\R^d\setminus\{0\}}\sup~\frac{\|D^3\sigma(t,x).(k_1,k_2,k_3)\|}{\|k_1\|\|k_2\|\|k_3\|}<\infty.
    \]
    Finally, assume that for all $x\in\R^d$, the mapping $\sigma(\cdot,x):[0,\infty)\to\mathcal{M}_d(\R)$ is globally Lipschitz continuous, uniformly with respect to $x$: there exists a positive real number $\Lips \in(0,\infty)$ such that for all $x\in\R^d$ one has
\begin{equation}\label{eq:asssigma2Lip}
    \|\sigma(t_2,x)-\sigma(t_1,x)\|\le \Lips|t_2-t_1|,\quad \forall~t_1,t_2\in [0,\infty).
\end{equation}
\end{ass}

We conclude this section by remarks on the modified objective function $F^h$.
\begin{rem}
The gradient $\nabla F^h$ of the modified objective function can be written as
\[
\nabla F^h(x)=(I+\frac{h}{2}\nabla^2F(x))\nabla F(x), \forall~x\in\R^d.
\]
This observation suggests to propose alternative stochastic differential equations which provide a second order weak approximation instead of~\eqref{eq:SDE2}. For instance, one can consider the stochastic differential equations
\[
d\overline{\Y}^h(t)=\overline{b}^h(\overline{\Y}^h(t))dt+\sqrt{h}\sigma(t,\overline{\Y}^h(t))dB(t)
\]
with $\overline{b}^h(x)=-(I-\frac{h}{2}\nabla^2F(x))^{-1}\nabla F(x)$ (under the condition $h<2L_F$ to ensure that the matrix is invertible) or with $\overline{b}^h(x)=-\exp(\frac{h}{2}\nabla^2F(x))\nabla F(x)$. Note that in general it does not seem possible to write $\overline{b}^h(x)$ given in the two examples above as a gradient. It can be proved that second order weak error estimates are still valid with $\Y^h(nh)$ replaced by $\overline{\Y}^h(nh)$, for $nh\le T$.

It would be interesting to exhibit a choice of alternative drift coefficient $\overline{b}^h$ such that one could obtain uniform in time weak error estimates at second order in $h$ under weaker conditions on the objective function $F$. For the two examples mentioned above this does not seem to be the case. We leave investigations in this direction for future works.
\end{rem}

\section{Main results}\label{sec:results}

This section is devoted to the full and rigorous statements of the main results of this article in Section~\ref{sec:results1}. We then discuss the relevance of the results through numerical experiments in Section~\ref{sec:results2} and complexity analysis in Section~\ref{sec:results3}. The proofs of Theorems~\ref{theo:1} and~\ref{theo:2} are technical and require some auxiliary results, they are postponed to Sections~\ref{sec:proof1} and~\ref{sec:proof2} respectively.

In each of the main results three types of errors are considered. Weak error estimates deal with $\E[\varphi(X_N)]-\varphi(x^\star)$ for an appropriate class of functions $\varphi$, errors on the expected residual are concerned with $\E[F(X_N)]-F(x^\star)$, and strong error estimates give upper bounds on $\E[\|X_N-x^\star\|]$ and on $\bigl(\E[\|X_N-x^\star\|^2]\bigr)^{\frac12}$. By the Cauchy--Schwarz inequality, one has  $\E[\|X_N-x^\star\|]\le \bigl(\E[\|X_N-x^\star\|^2]\bigr)^{\frac12}$. The main task is to prove the weak error estimates. The error bounds on the residual will be obtained from weak error bounds when $\varphi=F$. Furthermore, the strong error estimates follow from the error bounds on the residual by a straightforward application of the inequality~\eqref{eq:residual_low}.

Error bounds are made of two contributions. The first contribution is related to the large time behavior of solutions to the associated continuous-time dynamics (given by the ordinary differential equation~\eqref{eq:ODE1} for Theorem~\ref{theo:1} and by the stochastic differential equation~\eqref{eq:SDE2} for Theorem~\ref{theo:2}). The second contribution is related to a time-stepping error as $h$ goes to $0$, when comparing the solutions to the associated continuous-time dynamics and the stochastic optimization scheme~\eqref{eq:scheme}.

\subsection{Error estimates}\label{sec:results1}

The first main result is Theorem~\ref{theo:1} below.

\begin{theo}\label{theo:1}
Assume that the objective function $F$ and that the diffusion coefficient $\sigma$ satisfy the basic Assumption~\ref{ass:F1} and the basic Assumption~\ref{ass:sigma1} respectively.

There exist positive real numbers $H\in(0,h_{\max})$ and $C\in(0,\infty)$ such that, for any initial value $x_0\in\R^d$, for all $N\in\N^\star$ and $h\in(0,H)$, the following error estimates are satisfied:
\begin{itemize}
\item weak error estimates: for any mapping $\varphi:\R^d\to\R$ of class $\mathcal{C}^2$ with bounded second order derivative, one has
\begin{equation}\label{eq:theo1-weak}
\big|\E[\varphi(X_N)]-\varphi(x^\star)\big|\le C_1(\varphi,x_0)\bigl(e^{-\mu Nh}+h\bigr),
\end{equation}
where the positive real number $C_1(\varphi,x_0)$ is given by
\begin{equation}\label{eq:constantC1}
C_1(\varphi,x_0)=C\bigl(\|\nabla\varphi(x^\star)\|+\vvvert\varphi\vvvert_2\bigr)\bigl(1+\|x_0-x^\star\|^3\bigr),
\end{equation}
\item error estimates on the expected residual: one has
\begin{equation}\label{eq:theo1-residual}
\E[F(X_N)]-F(x^\star)\le C\bigl(1+\|x_0-x^\star\|^3\bigr)\bigl(e^{-2\mu Nh}+h\bigr),
\end{equation}
\item strong error estimates: one has
\begin{equation}\label{eq:theo1-strong}
\E[\|X_N-x^\star\|]\le \bigl(\E[\|X_N-x^\star\|^2]\bigr)^{\frac12}\le C\bigl(1+\|x_0-x^\star\|^{\frac32}\bigr)\bigl(e^{-\mu Nh}+h^{\frac12}\bigr).
\end{equation}
\end{itemize}
\end{theo}
We refer to Section~\ref{sec:proof1} for a presentation of the strategy of the proof and to Section~\ref{sec:conclusion-proof1} for the details of the proofs of each of the error estimates.

Observe that Theorem~\ref{theo:1} only requires to impose the basic Assumptions~\ref{ass:F1} and~\ref{ass:sigma1} on $F$ and $\sigma$. The time-stepping error is of size $h$ in the weak error estimates~\eqref{eq:theo1-weak} and the estimates on the expected residual~\eqref{eq:theo1-residual}, and of size $h^{\frac12}$ in the strong error estimates~\eqref{eq:theo1-strong}. Improved error bounds, where the time-stepping error is of size $h^2$ in weak error estimates and estimates on the expected residual, and of size $h$ in the strong error estimates, are stated in Theorem~\ref{theo:2} below. Increasing the order of convergence for the time-stepping error requires to impose the strengthened Assumptions~\ref{ass:F2} and~\ref{ass:sigma2} on $F$ and $\sigma$. As will be clear in the statement of Theorem~\ref{theo:2}, there is a non-trivial impact on the other contribution of the error related to the large time behavior of solutions to the stochastic differential equation~\eqref{eq:SDE2}.

Let us introduce the mapping $\rho$ defined by
\begin{equation}\label{eq:rhoT}
\rho(T) = \int_0^{T}e^{2\mu s}\varsigma(s)^2 \,ds,\quad \forall~T\in[0,\infty),
\end{equation}
where the mapping $\varsigma:[0,\infty)\to[0,\infty)$ is introduced in Assumption~\ref{ass:sigma1}. Note that the mapping $T\in[0,\infty)\mapsto e^{-2\mu T}\rho(T)\in[0,\infty)$ is bounded: owing to Assumption~\ref{ass:sigma1}, $\varsigma$ is bounded, therefore one has
\begin{equation}\label{eq:boundrho}
\underset{T\ge 0}\sup~e^{-2\mu T}\rho(T)\le \frac{\|\varsigma\|_\infty^2}{2\mu}.
\end{equation}
As will be clearly explained below, without further conditions on the behavior of $\varsigma(T)$ when $T\to\infty$, obtaining Theorem~\ref{theo:2} may not be of any practical interest. However assuming that $\varsigma(T)$ tends to $0$ as $T\to\infty$, one obtains
\begin{equation}\label{eq:cvrho}
e^{-2\mu T}\rho(T)\underset{T\to\infty}\to 0,
\end{equation}
and the speed of convergence has an impact when analyzing the computational complexity of the algorithm when comparing Theorems~\ref{theo:1} and~\ref{theo:2}.

\begin{theo}\label{theo:2}
Assume that the objective function $F$ and that the diffusion coefficient $\sigma$ satisfy the strengthened Assumption~\ref{ass:F2} and the strengthened Assumption~\ref{ass:sigma2} respectively.

There exist positive real numbers $H\in(0,h_{\max})$ and $C\in(0,\infty)$ such that, for any initial value $x_0\in\R^d$, for all $N\in\N^\star$ and $h\in(0,H)$, the following error estimates are satisfied:
\begin{itemize}
\item weak error estimates: for any mapping $\varphi:\R^d\to\R$ of class $\mathcal{C}^3$ with bounded second and third order derivatives, one has
\begin{equation}\label{eq:theo2-weak}
\big|\E[\varphi(X_N)]-\varphi(x^\star)\big|\le C_2(\varphi,x_0)\left(e^{-\mu Nh}\left(1+h\rho(Nh)\right)^{\frac12}+h^2\right),
\end{equation}
where the positive real number $C_2(\varphi,x_0)$ is given by
\begin{equation}\label{eq:constantC2}
C_2(\varphi,x_0)=C\bigl(\|\nabla\varphi(x^\star)\|+\vvvert\varphi\vvvert_2+\vvvert\varphi\vvvert_3\bigr)\bigl(1+\|x_0-x^\star\|^5\bigr).
\end{equation}
\item error estimates on the expected residual: one has
\begin{equation}\label{eq:theo2-residual}
\E[F(X_N)]-F(x^\star)\le C\bigl(1+\|x_0-x^\star\|^5\bigr)\left(e^{-2\mu Nh}\left(1+h\rho(Nh)\right)+h^2\right),
\end{equation}
\item strong error estimates: one has
\begin{equation}\label{eq:theo2-strong}
\E[\|X_N-x^\star\|]\le \bigl(\E[\|X_N-x^\star\|^2]\bigr)^{\frac12}\le C\bigl(1+\|x_0-x^\star\|^{\frac52}\bigr)\left(e^{-\mu Nh}\left(1+h\rho(Nh)\right)^{\frac{1}{2}}+h\right).
\end{equation}
\end{itemize}
\end{theo}

Owing to the boundedness property~\eqref{eq:boundrho}, it is apparent that comparing the error estimate for the expected residual~\eqref{eq:theo1-residual} and~\eqref{eq:theo2-residual}, respectively the strong error estimates~\eqref{eq:theo1-strong} and~\eqref{eq:theo2-strong}, the upper bound from Theorem~\ref{theo:2} is dominated by the upper bound from Theorem~\ref{theo:1}. If one compares the weak error estimates~\eqref{eq:theo1-weak} and~\eqref{eq:theo2-weak}, the comparison is not clear. We refer to Section~\ref{sec:results3} for an analysis of the computational complexity.

\begin{rem}
The reason why $\rho(T)$ with $T=Nh$ appears in Theorem~\ref{theo:2}, whereas it does not appear in Theorem~\ref{theo:1} is due to the use of different decompositions of the weak error
\begin{align*}
\E[\varphi(X_N)]-\varphi(x^\star)&=\varphi(\X^0(Nh))-\varphi(x^\star)+\E[\varphi(X_N)]-\varphi(\X^0(Nh))\\
&=\E[\varphi(\Y^h(Nh))]-\varphi(x^\star)+\E[\varphi(X_N)]-\E[\varphi(\Y^h(Nh))].
\end{align*}
On the one hand, the first decomposition~\eqref{eq:decomperror1} (see Section~\ref{sec:proof1}) is the primary step in the proof of Theorem~\ref{theo:1}, and exploits the large time behavior of the solution $\X^0(T)$ of the deterministic gradient system~\eqref{eq:ODE1}. The diffusion coefficient $\sigma$ does not appear in this description. On the other hand, the second decomposition~\eqref{eq:decomperror2} (see Section~\ref{sec:proof2}) is the primary step in the proof of Theorem~\ref{theo:2}. To deal with the first term of the decomposition, it is necessary to study the large time behavior of the solution $\Y^h(T)$ of the stochastic differential equation~\eqref{eq:SDE2}, see Proposition~\ref{propo:YT}, which depends on $\rho$.
\end{rem}

\subsection{Numerical experiments}\label{sec:results2}

The objective of this section is investigate the behavior of the stochastic optimization scheme~\eqref{eq:scheme} and to verify that the terms appearing in the upper bounds from Theorem~\ref{theo:2} are relevant. We focus only on strong error estimates~\eqref{eq:theo2-strong}, which may be rewritten as
\begin{equation}\label{eq:num}
\bigl(\E[\|X_N-x^\star\|^2]\bigr)^{\frac12}\le C(x_0)\left(e^{-\mu T}+\sqrt{h} \sqrt{e^{-2\mu T}\rho(T)}+h\right),
\end{equation}
using the notation $T=Nh$.

The numerical experiments are performed in dimension $d=1$. In addition, the noise is additive, i.e. $\sigma$ only depends only on time $t$: one has
\[
\sigma(t,x)=\sigma(t),\quad\forall~t\ge 0,~x\in\R.
\]
Choosing different objective functions $F$ and mappings $\sigma$ allows us to illustrate below that the three terms appearing in the right-hand side of~\eqref{eq:num} can be observed when evaluating the error as a function of the time-step size $h$, for different values of $T$.

\subsubsection{Quadratic objective function}

Let us first assume that the objective function is given by
\[
F(x)=\frac{x^2}{2}.
\]
The unique minimizer of $F$ is $x^\star=0$. In the numerical experiments, the initial value is $x_0=1$. To estimate the strong error $\bigl(\E[\|X_N-x^\star\|^2]\bigr)^{\frac12}$, Monte Carlo averages over $100$ independent realizations is computed. The time-step size $h$ takes values in
\[
\{1.0~10^{-2},5.0~10^{-3},2.5~10^{-3},1.25~10^{-3},6.25~10^{-4},3.125~10^{-4},1.5625~10^{-4}\}.
%\{0.01, 0.005, 0.0025, 0.00125, 0.000625, 0.0003125, 0.00015625\}.
\]
The error is represented in logarithmic scale and a reference line with slope $1/2$ is displayed.

First, let us assume that $\sigma(t)=e^{-at}$, with $a\in\{1.5,1.0,0.5,0.1\}$. Figure~\ref{fig:quad-expo} illustrates the dependence of the strong error with respect to $h$ for different values of the final time $T\in\{2,4,8,16,32\}$. For small values of $T$ ($T=2$ and $T=4$), the error does not depend on $h$, and is also seen to be independent of the parameter $a$. This behavior corresponds to the observation of the error term $e^{-\mu T}$ in the right-hand side of~\eqref{eq:num}. For large values of $T$ ($T=16$ and $T=32$), the error does not depend on $h$ if $a$ is large ($a=1.5$ and $a=1$) however it depends on $h$ and converges with rate $1/2$ for smaller values of $a$ ($a=0.5$ and $a=0.1$). In addition, the error depends on $T$ (for a fixed  value of $a$). This behavior corresponds to the observation of the error term $\sqrt{h} \sqrt{e^{-2\mu T}\rho(T)}$ in the right-hand side of~\eqref{eq:num}.

\begin{figure}[ht]
\centering
\begin{subfigure}
  \centering
  \includegraphics[width=0.4\textwidth]{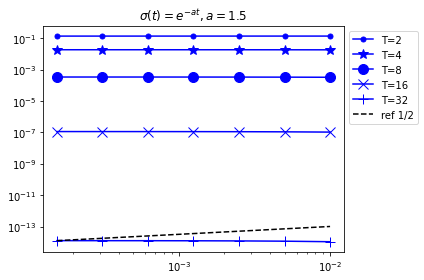}
  %\caption{}
\end{subfigure}%
\begin{subfigure}
  \centering
  \includegraphics[width=0.4\textwidth]{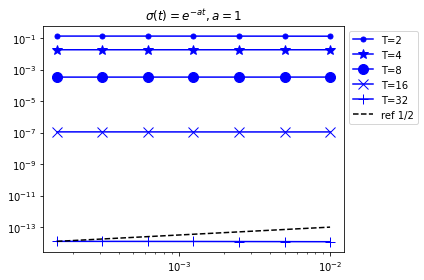}
  %\caption{}
\end{subfigure}%
\begin{subfigure}
  \centering
  \includegraphics[width=0.4\textwidth]{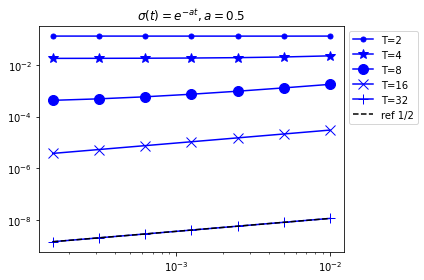}
  %\caption{}
\end{subfigure}%
\begin{subfigure}
  \centering
  \includegraphics[width=0.4\textwidth]{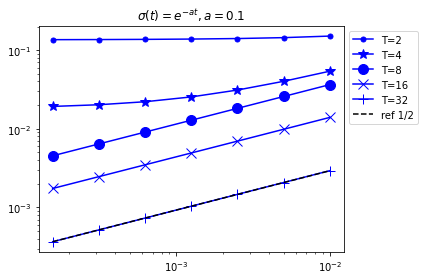}
  %\caption{}
\end{subfigure}%
\caption{Illustration of strong error estimates for the quadratic objective function $F$, and with $\sigma(t)=e^{-at}$ for $a=1.5$ (top left), $a=1.0$ (top right), $a=0.5$ (bottom left) and $a=0.1$ (bottom right). The values of the final time are $T\in\{2,4,8,16,32\}$.}
\label{fig:quad-expo}
\end{figure}

Second, let us assume that $\sigma(t)=\frac{1}{1+t^a}$, with $a\in\{2.0,1.5,1.0,0.5\}$. Figure~\ref{fig:quad-poly} illustrates the dependence of the strong error with respect to $h$ for different values of the final time $T\in\{2,4,8,16,32\}$. Like in the numerical experiments reported in Figure~\ref{fig:quad-expo}, for small values of $T$ ($T=2$ and $T=4$), the error does not depend on $h$, and is also seen to be independent of the parameter $a$. This behavior corresponds to the observation of the error term $e^{-\mu T}$ in the right-hand side of~\eqref{eq:num}. For large values of $T$ ($T=16$ and $T=32$), the error converges with rate $1/2$ for all the values of $a$. In addition, the error depends on $T$. This behavior corresponds to the observation of the error term $\sqrt{h} \sqrt{e^{-2\mu T}\rho(T)}$ in the right-hand side of~\eqref{eq:num}.

\begin{figure}[ht]
\centering
\begin{subfigure}
  \centering
  \includegraphics[width=0.4\textwidth]{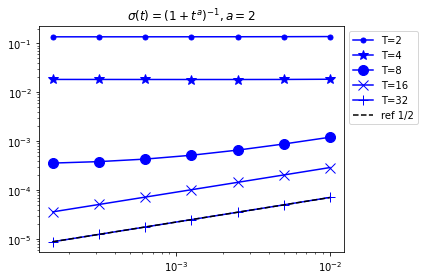}
  %\caption{}
\end{subfigure}%
\begin{subfigure}
  \centering
  \includegraphics[width=0.4\textwidth]{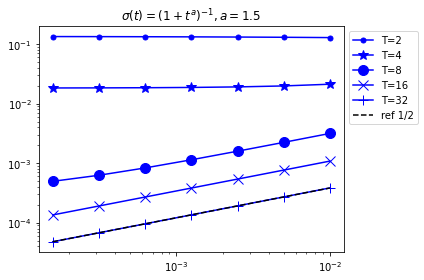}
  %\caption{}
\end{subfigure}%
\begin{subfigure}
  \centering
  \includegraphics[width=0.4\textwidth]{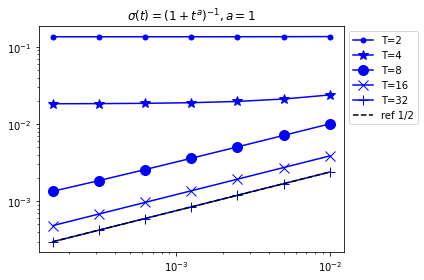}
  %\caption{}
\end{subfigure}%
\begin{subfigure}
  \centering
  \includegraphics[width=0.4\textwidth]{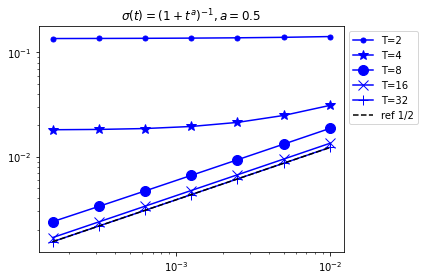}
  %\caption{}
\end{subfigure}%
\caption{Illustration of strong error estimates for the quadratic objective function $F$, and with $\sigma(t)=\frac{1}{1+t^a}$ for $a=2.0$ (top left), $a=1.5$ (top right), $a=1.0$ (bottom left) and $a=0.5$ (bottom right). The values of the final time are $T\in\{2,4,8,16,32\}$.}
\label{fig:quad-poly}
\end{figure}

Note that in the two numerical experiments reported in Figures~\ref{fig:quad-expo} and~\ref{fig:quad-poly}, one does not observe the third error term $h$ appearing in the right-hand side of~\eqref{eq:num}.

\subsubsection{Non-quadratic objective function}

In order to exhibit the third error term $h$ appearing in the right-hand side of~\eqref{eq:num}, we need to consider a non-quadratic objective function $F=F_\epsilon$, given by
\[
F_\epsilon'(x)=x+\arctan(x/\epsilon)
\]
where $\epsilon$ is a parameter, which takes values in $\{10^{-1},10^{-2},10^{-3},10^{-4},10^{-5},10^{-6}\}$.

The unique minimizer of $F$ is $x^\star=0$. In the numerical experiments, the initial value is $x_0=1$. To estimate the strong error $\bigl(\E[\|X_N-x^\star\|^2]\bigr)^{\frac12}$, Monte Carlo averages over $10$ independent realizations is computed. The time-step size $h$ takes values in
\[
%\{0.01, 0.005, 0.0025, 0.00125, 0.000625, 0.0003125, 0.00015625\}.
\{1.0~10^{-2},5.0~10^{-3},2.5~10^{-3},1.25~10^{-3},6.25~10^{-4},3.125~10^{-4},1.5625~10^{-4},7.8125~10^{-5},3.90625~10^{-5}\}.
\]
The error is represented in logarithmic scale and reference lines with slopes $1/2$ and $1$ are displayed.

In Figures~\ref{fig:arctan-expo} and~\ref{fig:arctan-poly}, one observes how the error depends on the parameter $\epsilon$ for different values of $T\in\{8,16,32\}$, respectively with $\sigma(t)=e^{-t}$ and $\sigma(t)=\frac{1}{1+t}$. The value of $\epsilon$ has a strong impact. For large values of $\epsilon$ ($\epsilon=10^{-1}$ and $\epsilon=10^{-2}$), the same behavior as in the case of a quadratic objective function is observed: the rate of convergence with respect to $h$ is $1/2$ and the error decreases when $T$ increases. This behavior corresponds to the observation of the error term $\sqrt{h} \sqrt{e^{-2\mu T}\rho(T)}$ in the right-hand side of~\eqref{eq:num}. For small values of $\epsilon$ ($\epsilon=10^{-5}$ and $\epsilon=10^{-6}$), one observes a different behavior: the rate of convergence with respect to $h$ is $1$ and the error is independent of $T$. This behavior corresponds to the observation of the error term $h$ in the right-hand side of~\eqref{eq:num}. For intermediate values of $\epsilon$ ($\epsilon=10^{-5}$ and $\epsilon=10^{-6}$), a transition between the two regimes is observed, depending on the size of $h$. Figures~\ref{fig:arctan-expo} and~\ref{fig:arctan-poly} thus show that the error bounds obtained in Theorem~\ref{theo:2} are sharp.

In Figure~\ref{fig:arctan-T}, a small sample of the same numerical experiments are presented differently, in order to illustrate how the error depends on $T$ when the value of $\epsilon$ is fixed. First, one observes that for $\epsilon=10^{-1}$ the rate of convergence with respect to $h$ is $1/2$ and that the error decreases as $T$ increases. Second, one observes that for $\epsilon=10^{-6}$ the rate of convergence with respect to $h$ is $1$ and that the error does not depend significantly on $T$. This confirms the interpretation of the results above.

Let us explain the role of the parameter $\epsilon$: the second and the third error terms in the right-hand side of~\eqref{eq:num} depend on $\epsilon$, more precisely one should write 
\[
C_{\epsilon,1}(x_0)\sqrt{h}\sqrt{e^{-2\mu T}\rho(T)}+C_{\epsilon,2}(x_0)h,
\]
where $C_{\epsilon,1}(x_0),C_{\epsilon,2}(x_0)$ depend on derivatives of the objective function $F_\epsilon$ and go to infinity as $\epsilon$ goes to $0$ at different speeds: when $\epsilon\to 0$, one has $C_{\epsilon,1}(x_0)={\rm o}\bigl(C_{\epsilon,2}(x_0)\bigr)$. For a fixed value of $\epsilon$, when $h$ goes to $0$ only the first error term above should be visible in the numerical experiments. However, choosing small values of $\epsilon$ allows us to exhibit the second error term for the considered values of $h$.
\begin{figure}[ht]
\centering
\begin{subfigure}
  \centering
  \includegraphics[width=0.4\textwidth]{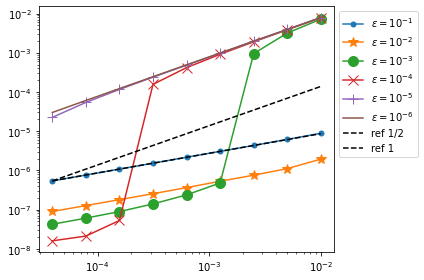}
  %\caption{}
\end{subfigure}%
\begin{subfigure}
  \centering
  \includegraphics[width=0.4\textwidth]{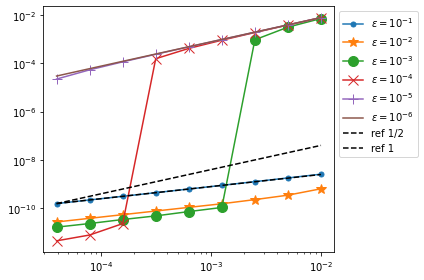}
  %\caption{}
\end{subfigure}%
\begin{subfigure}
  \centering
  \includegraphics[width=0.4\textwidth]{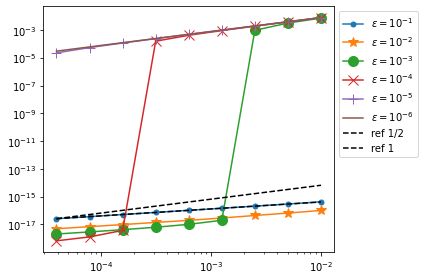}
  %\caption{}
\end{subfigure}%
\caption{Illustration of strong error estimates for the non-quadratic objective function $F_\epsilon$, for different values of $\epsilon$, and with $\sigma(t)=e^{-t}$. The values of the final time are $T=8$ (left), $T=16$ (right) and $T=32$ (bottom).}
\label{fig:arctan-expo}
\end{figure}

\newpage

\begin{figure}[ht]
\centering
\begin{subfigure}
  \centering
  \includegraphics[width=0.4\textwidth]{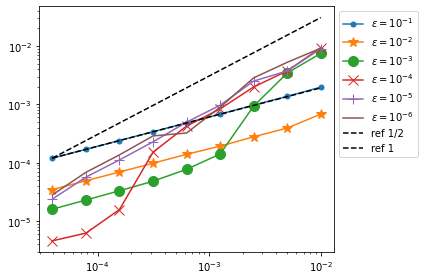}
  %\caption{}
\end{subfigure}%
\begin{subfigure}
  \centering
  \includegraphics[width=0.4\textwidth]{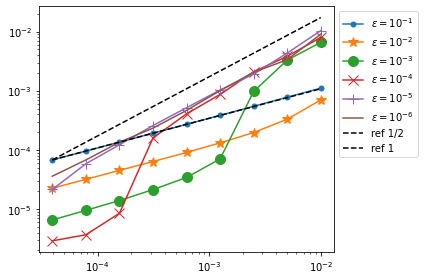}
  %\caption{}
\end{subfigure}%
\begin{subfigure}
  \centering
  \includegraphics[width=0.4\textwidth]{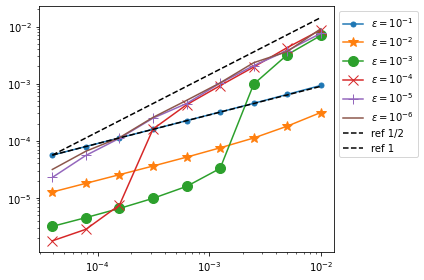}
  %\caption{}
\end{subfigure}%
\caption{Illustration of strong error estimates for the non-quadratic objective function $F_\epsilon$, for different values of $\epsilon$, and with $\sigma(t)=\frac{1}{1+t}$. The values of the final time are $T=8$ (left), $T=16$ (right) and $T=32$ (bottom).}
\label{fig:arctan-poly}
\end{figure}

\begin{figure}[H]
\centering
\begin{subfigure}
  \centering
  \includegraphics[width=0.39\textwidth]{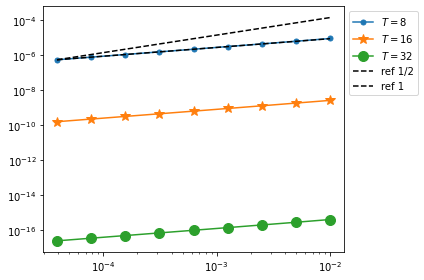}
  %\caption{}
\end{subfigure}%
\begin{subfigure}
  \centering
  \includegraphics[width=0.39\textwidth]{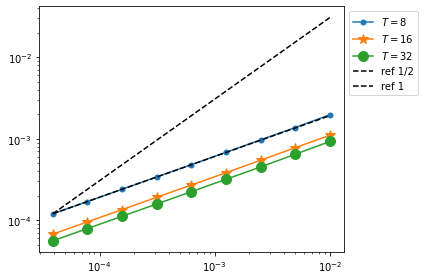}
  %\caption{}
\end{subfigure}%
\begin{subfigure}
  \centering
  \includegraphics[width=0.39\textwidth]{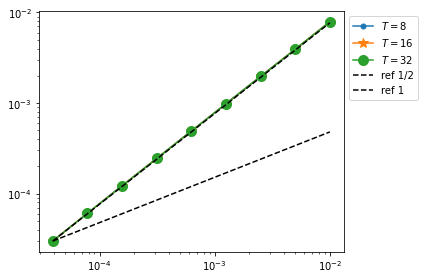}
  %\caption{}
\end{subfigure}%
\begin{subfigure}
  \centering
  \includegraphics[width=0.39\textwidth]{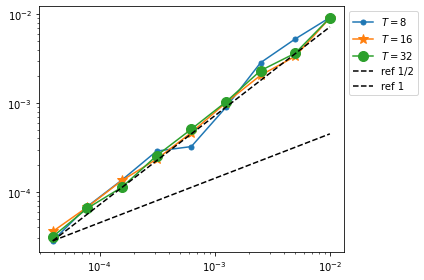}
  %\caption{}
\end{subfigure}%
\caption{Illustration of strong error estimates for the non-quadratic objective function $F_\epsilon$, for different values of $T$. Left: $\sigma(t)=e^{-t}$. Right: $\sigma(t)=\frac{1}{1+t}$. Top: $\epsilon=10^{-1}$. Bottom: $\epsilon=10^{-6}$.}
\label{fig:arctan-T}
\end{figure}

\subsection{Complexity analysis}\label{sec:results3}

Let us perform some complexity analysis associated with the convergence results stated in Theorems~\ref{theo:1} and~\ref{theo:2}. We only focus on the strong error estimates~\eqref{eq:theo1-strong} and~\eqref{eq:theo2-strong}. The conclusions would be the same for the error estimates on the expected residual~\eqref{eq:theo1-residual} and~\eqref{eq:theo2-residual}, owing to the inequalities~\eqref{eq:residual_low} and~\eqref{eq:residual_up}. However, the conclusions are different if one considers weak error estimates~\eqref{eq:theo1-weak} and~\eqref{eq:theo2-weak}, but the discussion is omitted in this work.

In order to study the computational complexity, let $\delta\in(0,1)$ be an arbitrarily small error size, then one needs to find $h\in(0,H)$ and $N\in\N^\star$ such that the strong error satisfies
\begin{equation}\label{eq:coststrong}
\bigl(\E[\|X_N-x^\star\|^2]\bigr)^{\frac12}\le \delta.
\end{equation}
The computational cost is proportional to the number of time steps $N$. Below, the symbol $\approx$ is used to denote the size of the error, constants which are independent of the considered parameters $\delta$, $h$ and $N$ are omitted.

First, if Theorem~\ref{theo:1} is considered, owing to the strong error estimate~\eqref{eq:theo1-strong} the upper bound~\eqref{eq:coststrong} is satisfied if $h$ is chosen such that $h\approx \delta^2$ and if then $N$ is chosen such that $Nh\approx \log(\delta^{-1})$. Therefore, the computational cost is of size
\[
N=N_1(\delta)\approx \delta^{-2}\log(\delta^{-1}).
\]
Second, if Theorem~\ref{theo:2} is considered, the analysis depends on the mapping $\varsigma$. As already explained in Section~\ref{sec:results1}, since the mapping $T\mapsto e^{-2\mu T}\rho(T)$ is bounded, the right-hand side of~\eqref{eq:theo2-strong} is dominated by the right-hand side of~\eqref{eq:theo1-strong}, therefore one always observe a lower computational cost when considering Theorem~\ref{theo:2} instead of Theorem~\ref{theo:1}. It remains to study whether some significant reduction of the computational cost is observed. We consider two cases: either we assume that $\varsigma(t)=e^{-at}$ or that $\varsigma(t)=(1+t^{a})^{-1}$, for some positive real number $a\in(0,\infty)$. These are the two cases considered in the numerical experiments reported in Section~\ref{sec:results2}.

If $\varsigma(t)=e^{-at}$, then it is straightforward to check one has
\[
e^{-2\mu T}\rho(T)\le C_ae^{-2\mu_aT},\quad \forall~T\ge 0,
\]
for some $C_a\in(0,\infty)$ and $\mu_a\in(0,\mu]$. As a result, owing to the strong error estimate~\eqref{eq:theo2-strong} the upper bound~\eqref{eq:coststrong} is satisfied if $h$ is chosen such that $h\approx\delta$ and if then $N$ is chosen such that $Nh\approx \log(\delta^{-1})$. Therefore, the computational cost is of size
\[
N=N_2(\delta)\approx \delta^{-1}\log(\delta^{-1}).
\] 
In that case, the comparison of the computational costs is favorable to Theorem~\ref{theo:2} compared with Theorem~\ref{theo:1}:
\[
\frac{N_2(\delta)}{N_1(\delta)}\underset{\delta\to 0}\to 0.
\]
It is thus worth using the higher order approximation with respect to $h$ to analyze the algorithm in that case, but it should be reminded that Theorem~\ref{theo:2} requires more stringent conditions on the objective function, imposed from Assumption~\ref{ass:F2}.

If $\varsigma(t)=(1+t^{a})^{-1}$, then it is straightforward to check one has
\[
e^{-2\mu T}\rho(T)\le \frac{C_a}{1+T^{2a}},\quad \forall~T\ge 0,
\]
for some $C_a\in(0,\infty)$. As a result, owing to the strong error estimate~\eqref{eq:theo2-strong} the upper bound~\eqref{eq:coststrong} is satisfied if $h$ is chosen such that $h\approx\delta$ and if then $N$ is chosen such that $Nh\approx \delta^{-\frac{1}{2a}}$. Therefore, the computational cost is of size
\[
N=N_2(\delta)\approx \delta^{-1-\frac{1}{2a}}.
\]
Contrary to the situation described above, the comparison of the computational costs $N_1(\delta)$ and $N_2(\delta)$ associated with the applications of Theorems~\ref{theo:1} and~\ref{theo:2} respectively depends on the value of the parameter $a$, i.e. on the speed of convergence to $0$ of the mapping $\sigma$. Indeed one has
\[
\frac{N_2(\delta)}{N_1(\delta)}\underset{\delta\to 0}\to
\begin{cases}
0,\quad \text{if}~a\ge 1/2,\\
\infty,\quad \text{if}~a<1/2.
\end{cases}
\]
As a conclusion, Theorem~\ref{theo:2} is more favorable than Theorem~\ref{theo:1} in terms of computational complexity only when $a\ge 1/2$.

\section{Auxiliary results on the stochastic gradient optimization scheme}\label{sec:aux}

This section is devoted to studying some properties of the discrete-time stochastic process $\bigl(X_n\bigr)_{n\in\N}$ defined by the stochastic gradient optimization scheme~\eqref{eq:scheme}. First, Lemma~\ref{lem:momentbounds-scheme} states moment bounds, which are uniform with respect to the time-step size $h$, and with respect to time $n\in\N$. Second, for any time-step size $h$ an auxiliary continuous time process $\bigl(\tilde{X}^h(t)\bigr)_{t\ge 0}$ is introduced and some properties are given in Lemma~\ref{lem:tildeX}.

\subsection{Moment bounds on the scheme}

\begin{lemma}\label{lem:momentbounds-scheme}
Assume that the objective $F$ satisfies the basic Assumption~\ref{ass:F1} and that the diffusion coefficient $\sigma$ satisfies the basic Assumption \ref{ass:sigma1}. There exist a non-increasing sequence $\bigl(H_p\bigr)_{p\in\N^\star}$, with $H_1\in(0,h_{\rm max})$, and a non-decreasing sequence $\bigl(C_p\bigr)_{p\in\N^\star}$, such that for any initial value $x_0\in\R^d$ and for any $p\in\N^\star$ one has
\begin{equation}\label{eq:momentbounds-scheme}
\underset{h\in(0,H_p)}\sup~\underset{n\in\N}\sup~\E[\|X_n-x^\star\|^{2p}]\le C_p(1+\|x_0-x^\star\|^{2p}).
\end{equation}
\end{lemma}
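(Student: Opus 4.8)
The plan is to prove, for each fixed $p\in\N^\star$, a one-step conditional estimate of the form
\[
\E\bigl[\|X_{n+1}-x^\star\|^{2p}\mid\mathcal{F}_{t_n}\bigr]\le (1-c_ph)\|X_n-x^\star\|^{2p}+C_ph
\]
valid for all $h$ below a threshold $H_p$, and then to iterate it over $n$. Throughout I write $Y_n=X_n-x^\star$ and $Z_n=Y_n-h\nabla F(X_n)$, which is $\mathcal{F}_{t_n}$-measurable, so that \eqref{eq:scheme} reads $Y_{n+1}=Z_n+h\sigma(t_n,X_n)\gamma_{n+1}$. The cornerstone is that the deterministic Euler step is a strict contraction: since $\nabla F(x^\star)=0$, the $\mu$-convexity \eqref{eq:assF1-muconvex} and the growth bound \eqref{eq:assF1-growth} give
\[
\|Z_n\|^2=\|Y_n\|^2-2h\langle Y_n,\nabla F(X_n)\rangle+h^2\|\nabla F(X_n)\|^2\le\bigl(1-2\mu h+\LipF^2 h^2\bigr)\|Y_n\|^2,
\]
and likewise $\|Z_n\|\le(1+h_{\rm max}\LipF)\|Y_n\|$.

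First I would treat $p=1$. Expanding $\|Y_{n+1}\|^2=\|Z_n\|^2+2h\langle Z_n,\sigma(t_n,X_n)\gamma_{n+1}\rangle+h^2\|\sigma(t_n,X_n)\gamma_{n+1}\|^2$ and taking the conditional expectation, the cross term vanishes because $\gamma_{n+1}$ is centered and independent of $\mathcal{F}_{t_n}$, while $\E[\|\sigma(t_n,X_n)\gamma_{n+1}\|^2\mid\mathcal{F}_{t_n}]={\rm Tr}(\sigma^\star\sigma)=\|\sigma(t_n,X_n)\|^2\le\|\varsigma\|_\infty^2(1+\|Y_n\|)^2$ by \eqref{eq:asssigma1growth}. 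Combining with the contraction and $(1+\|Y_n\|)^2\le 2+2\|Y_n\|^2$, the coefficient of $\|Y_n\|^2$ is $1-2\mu h+(\LipF^2+2\|\varsigma\|_\infty^2)h^2\le 1-\mu h$ as soon as $h\le\mu/(\LipF^2+2\|\varsigma\|_\infty^2)$, leaving an additive remainder of size $O(h^2)$.

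For general $p$ the argument is the same but the bookkeeping is heavier. I would expand $\|Y_{n+1}\|^{2p}=(\|Z_n\|^2+2h\langle Z_n,\sigma\gamma_{n+1}\rangle+h^2\|\sigma\gamma_{n+1}\|^2)^p$ by the multinomial formula and take the conditional expectation. Every monomial carrying an odd power of $\gamma_{n+1}$ vanishes by symmetry of the Gaussian law; the single term with no noise factor is $\|Z_n\|^{2p}\le(1-2\mu h+\LipF^2h^2)^p\|Y_n\|^{2p}\le(1-2p\mu h+C_ph^2)\|Y_n\|^{2p}$, while every surviving correction carries a factor $h^{j}$ with $j\ge 2$. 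Using the finiteness of all Gaussian moments together with $\|\sigma(t_n,X_n)\|\le\|\varsigma\|_\infty(1+\|Y_n\|)$ and $\|Z_n\|\le C\|Y_n\|$, each such correction is bounded by $C_ph^2$ times a polynomial in $\|Y_n\|$ of degree at most $2p$; absorbing the lower-degree monomials via $\|Y_n\|^j\le 1+\|Y_n\|^{2p}$ for $j<2p$, the corrections are bounded by $C_ph^2(1+\|Y_n\|^{2p})$. Hence
\[
\E\bigl[\|Y_{n+1}\|^{2p}\mid\mathcal{F}_{t_n}\bigr]\le\bigl(1-2p\mu h+C_ph^2\bigr)\|Y_n\|^{2p}+C_ph^2\le(1-p\mu h)\|Y_n\|^{2p}+C_ph
\]
for $h\le H_p:=\min(h_{\rm max},p\mu/C_p)$, and replacing $H_p,C_p$ by $\min_{q\le p}H_q$ and $\max_{q\le p}C_q$ makes the sequences non-increasing and non-decreasing as required.

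Finally, taking expectations and iterating yields
\[
\E[\|Y_n\|^{2p}]\le(1-p\mu h)^n\|Y_0\|^{2p}+C_ph\sum_{k=0}^{n-1}(1-p\mu h)^k\le\|Y_0\|^{2p}+\frac{C_p}{p\mu},
\]
since $(1-p\mu h)^n\le 1$ and the geometric sum is at most $(p\mu h)^{-1}$; with $Y_0=x_0-x^\star$ this is exactly \eqref{eq:momentbounds-scheme}. The main obstacle is the general-$p$ step: one must verify that every degree-$2p$ correction genuinely carries a factor $h^2$ rather than $h$, so that it is dominated by the $-2p\mu h$ drift produced by strong convexity, and that iterating the additive $O(h)$ remainder produces a bound independent of both $n$ and $h$. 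It is precisely the strict contraction of the deterministic step, a consequence of the $\mu$-convexity of $F$, that makes the estimate uniform in time; without it the discrete Gr\"onwall argument would only give bounds growing with $n$.
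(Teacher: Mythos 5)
Your proof is correct, and its overall architecture is the same as the paper's: a one-step conditional estimate of the form $\E[\|X_{n+1}-x^\star\|^{2p}\mid\mathcal{F}_n]\le(1-2p\mu h+C_ph^2)\|X_n-x^\star\|^{2p}+C_ph^2$, with the linear drift coming from $\mu$-convexity and every correction carrying a factor $h^2$, followed by a discrete Gr\"onwall iteration. Where you differ is in how the one-step estimate is produced for $p\ge 2$. The paper Taylor-expands $\psi_p(x)=\|x-x^\star\|^{2p}$ at $X_n$ in the direction of the full increment $\Delta X_{n+1}$: the $\mu$-convexity is applied to the first-order term $D\psi_p(X_n).\Delta X_{n+1}$ after conditioning (the noise part of that term has zero conditional mean), and the integral second-order remainder is bounded by $2p^2\bigl(\|X_n-x^\star\|+\|\Delta X_{n+1}\|\bigr)^{2(p-1)}\|\Delta X_{n+1}\|^2$, which is ${\rm O}(h^2)$ thanks to $\|\Delta X_{n+1}\|\le Ch(1+\|X_n-x^\star\|)(1+\|\gamma_{n+1}\|)$ and Gaussian moments. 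You instead isolate the deterministic Euler step $Z_n$, prove it is a strict contraction in norm, raise the resulting three-term identity to the $p$-th power by the multinomial theorem, and kill the monomials that are odd in $\gamma_{n+1}$ by Gaussian symmetry (which plays exactly the role of the paper's conditional-expectation step). Your route makes the mechanism very transparent --- the entire $-2p\mu h$ drift comes from $(1-2\mu h+\LipF^2h^2)^p$ together with the elementary inequality $(1-a)^p\le 1-pa+\binom{p}{2}a^2$ --- at the price of heavier combinatorial bookkeeping over the surviving multinomial terms; the paper's Taylor route avoids the combinatorics but needs the pointwise Hessian bound \eqref{borne:hessienne:norme^p}. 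Both arguments are rigorous and yield the same recursion.

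One small point to tighten: your threshold $H_p=\min(h_{\rm max},p\mu/C_p)$ does not by itself guarantee $1-p\mu h\in[0,1)$, which your final step uses when bounding $(1-p\mu h)^n\le 1$ and the geometric sum by $(p\mu h)^{-1}$. The fix is cosmetic --- include a term of order $1/(p\mu)$ in the minimum, as the paper does by setting $H_p=\frac12\min\bigl(\tfrac{1}{2p\mu},\tfrac{2p\mu}{C_p},h_{\rm max}\bigr)$ --- but it should be stated.
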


In the proof of Lemma~\ref{lem:momentbounds-scheme} given below, it will be obtained that $H_p$ tends to $0$ when $p$ tends to infinity. In the error analysis below, it will be sufficient to apply the result of Lemma~\ref{lem:momentbounds-scheme} with $p=3$, therefore the condition $h\in(0,H_3)$ on the time-step size will be sufficient.

\begin{rem}
Lemma~\ref{lem:momentbounds-scheme} is valid under the basic Assumptions~\ref{ass:F1} and~\ref{ass:sigma1} on the objective function $F$ and the diffusion coefficient $\sigma$, in other words it is not necessary to impose the strengthened Assumptions~\ref{ass:F2} and~\ref{ass:sigma2}.
\end{rem}

\begin{proof}
Let us first introduce some auxiliary notation.

For any positive integer $n\in\N^\star$, define the $\sigma$-field $\mathcal{F}_n=\sigma\bigl(\gamma_1,\ldots,\gamma_{n}\bigr)$. Set also $\mathcal{F}_0=\{\emptyset,\Omega\}$. Recall the notation $\E[\cdot|\mathcal{F}_n]$ for the conditional expectation.

For all $n\in\N$, set
\[
\Delta X_{n+1}=X_{n+1}-X_n=-h\nabla F(X_n)+h\sigma(t_n,X_n)\gamma_{n+1},
\]
Observe that the random variable $X_n$ is $\mathcal{F}_n$-measurable for all $n\in\N$. In addition, one has 
\begin{equation}\label{eq:auxscheme}
X_{n+1}-x^\star=X_n-x^\star+\Delta X_{n+1},\quad \forall~n\in\N.
\end{equation}

The proof then proceeds in two steps, in order to treat the cases $p=1$ and $p\ge 2$ separately.

\emph{Step 1: the case $p=1$.} Using the identity~\eqref{eq:auxscheme} above, one obtains for all $n\in\N$
\[
\|X_{n+1}-x^\star\|^2=\|X_n-x^\star\|^2+2\langle X_n-x^\star,\Delta X_{n+1}\rangle+\|\Delta X_{n+1}\|^2.
\]
Let $n\in\N$. Since the random variable $X_n$ is $\mathcal{F}_n$-measurable, one has
\[
\E[\|X_n-x^\star\|^2|\mathcal{F}_n]=\|X_n-x^\star\|^2.
\]
In addition, the random variable $\gamma_{n+1}$ is independent of $\mathcal{F}_n$, thus one has $$\E[\gamma_{n+1}|\mathcal{F}_n]=\E[\gamma_{n+1}]=0.$$ 
As a result, using the definition of $\Delta X_{n+1}$ and the properties of conditional expectation, one has
\begin{align*}
\E[\langle X_n-x^\star,\Delta X_{n+1}\rangle |\mathcal{F}_n]%]&=-h\E[\langle X_n-x^\star,\nabla F(X_n)\rangle |\mathcal{F}_n]+h\E[\langle X_n-x^\star,\sigma(t_n,X_n)\gamma_{n+1}\rangle |\mathcal{F}_n]\\
&=-h\langle X_n-x^\star,\nabla F(X_n)\rangle+h\langle X_n-x^\star,\sigma(t_n,X_n)\E[\gamma_{n+1}|\mathcal{F}_n]\rangle\\
&=-h\langle X_n-x^\star,\nabla F(X_n)\rangle.
\end{align*}
Using the $\mu$-convexity of $F$ \eqref{eq:assF1-muconvex} in Assumption~\ref{ass:F1}, one obtains 
\[
\langle X_n-x^\star,\nabla F(X_n)\rangle=\langle X_n-x^\star,\nabla F(X_n)-\nabla F(x^\star)\rangle \geq \mu \| X_n-x^\star\|^2,
\]
since $\nabla F(x^\star)=0$ by the the first-order optimality condition. One obtains the 
the upper bound
\[
\E[\langle X_n-x^\star,\Delta X_{n+1}\rangle |\mathcal{F}_n]=-h\langle X_n-x^\star,\nabla F(X_n)\rangle\le -\mu h\|X_n-x^\star\|^2.
\]
Finally, using the definition of $\Delta X_{n+1}$ and similar arguments one has
\begin{align*}
\E[\|\Delta X_{n+1}\|^2|\mathcal{F}_n]&=h^2\E[\|\nabla F(X_n)\|^2|\mathcal{F}_n]+h^2\E[\|\sigma(t_n,X_n)\gamma_{n+1}\|^2|\mathcal{F}_n]\\
&\quad-2h^2\E[\langle \nabla F(X_n),\sigma(t_n,X_n)\gamma_{n+1}\rangle|\mathcal{F}_n]\\
&=h^2\|\nabla F(X_n)\|^2+h^2\|\sigma(t_n,X_n)\|^2.
\end{align*}
Recall that the mappings $\nabla F$ and $\sigma$ have linear growth (see~\eqref{eq:assF1-growth} and~\eqref{eq:asssigma1growth}), and that the mapping $\varsigma$ is bounded (by Assumption~\ref{ass:sigma1}). Then one obtains for all $n\in\N$ and all $h\in(0,h_{\rm max})$
\[
\E[\|\Delta X_{n+1}\|^2|\mathcal{F}_n]\le \LipF^2 h^2\|X_n-x^\star\|^2+h^2\varsigma(t_n)^2(1+\|X_n-x^\star\|)^2
\]
and gathering the upper bounds for $\E[\langle X_n-x^\star,\Delta X_{n+1}\rangle |\mathcal{F}_n]$ and $\E[\|\Delta X_{n+1}\|^2|\mathcal{F}_n]$ one has
\[
\E[\|X_{n+1}-x^\star\|^2|\mathcal{F}_n]\le \Bigl(1-2\mu h+\bigl(\LipF^2+2\|\varsigma\|_\infty^2\bigr)h^2\Bigr)\|X_{n}-x^\star\|^2+2h^2\|\varsigma\|_\infty^2.
\]
For all $h\in(0,h_{\rm max})$, set
\[
\rho_1(h)=1-2\mu h+\bigl(\LipF^2+2\|\varsigma\|_\infty^2\bigr)h^2.
\]
The tower property of conditional expectation yields the identity $$\E[\E[\|X_{n+1}-x^\star\|^2|\mathcal{F}_n]]=\E[\|X_{n+1}-x^\star\|^2],$$ 
thus taking expectation, for all $n\in\N$ one has
\[
\E[\|X_{n+1}-x^\star\|^2]\le \rho_1(h)\E[\|X_{n}-x^\star\|^2]+2h^2\|\varsigma\|_\infty^2.
\]
Set
\[
H_1=\frac{1}{2}\min\Bigl(\frac{1}{2\mu},\frac{2\mu}{\LipF^2+2\|\varsigma\|_\infty^2},h_{\rm max}\Bigr)
\]
and observe that one has $\rho_1(h)\in(0,1)$ for all $h\in(0,H_1)$. Applying the discrete Gr\"onwall inequality, one obtains for all $n\ge 0$ and for all $h\in(0,H_1)$
\begin{align*}
\E[\|X_{n}-x^\star\|^2]&\le \rho_1(h)^n\|x_0-x^\star\|^2+\frac{2h^2\|\varsigma\|_\infty^2}{1-\rho_1(h)}\\
&\le \|x_0-x^\star\|^2+\frac{2h\|\varsigma\|_{\infty}^2}{2\mu -\bigl(\LipF^2+2\|\varsigma\|_\infty^2\bigr)h}\\
&\le \|x_0-x^\star\|^2+\frac{2H_1\|\varsigma\|_{\infty}^2}{\mu}.
\end{align*}
This concludes the proof of the inequality~\eqref{eq:momentbounds-scheme} in the case $p=1$.

\emph{Step 2: the case $p\ge 2$.} 

We start by a useful elementary remark we shall use also in other proofs. 
\begin{fact}
Define the auxiliary mapping $\psi_p$ by $\psi_p(x)=\|x-x^\star\|^{2p}$. For all $x\in\R^d$ one has
\begin{eqnarray}
D\psi_p(x).k&=&2p\|x-x^\star\|^{2(p-1)}\langle x-x^\star,k\rangle, \forall~k\in\R^d, \label{gradient:norme^p}\\
|D^2\psi_p(x).(k_1,k_2)|&\le &2p(2p-1)\|x-x^\star\|^{2(p-1)}\|k_1\|\|k_2\|,\quad \forall~k_1,k_2\in\R^d.  \label{borne:hessienne:norme^p}
\end{eqnarray}
\end{fact}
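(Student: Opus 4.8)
The plan is to treat $\psi_p$ as the composition $\psi_p = g^p$, where $g(x) = \|x-x^\star\|^2 = \langle x - x^\star, x-x^\star\rangle$ is a quadratic polynomial. In particular $\psi_p$ is itself a polynomial of degree $2p$, hence of class $\mathcal{C}^\infty$ on all of $\R^d$, so the formal differentiation below is justified everywhere (including at $x=x^\star$, where $\psi_p$ is not defined through the non-smooth map $x\mapsto\|x-x^\star\|$ but through its square). I would compute the two differentials exactly by the chain and product rules, then invoke Cauchy--Schwarz to pass from the exact Hessian expression to the stated upper bound.

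First I would record $Dg(x).k = 2\langle x - x^\star, k\rangle$. The chain rule then gives
\[
D\psi_p(x).k = p\,g(x)^{p-1}\,Dg(x).k = 2p\|x-x^\star\|^{2(p-1)}\langle x-x^\star, k\rangle,
\]
which is exactly the claimed identity~\eqref{gradient:norme^p}.

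For the Hessian, I would differentiate the expression $D\psi_p(x).k_1 = 2p\,g(x)^{p-1}\langle x-x^\star, k_1\rangle$ in the direction $k_2$, applying the product rule to the two factors $g(x)^{p-1}$ and $\langle x-x^\star, k_1\rangle$. Since $D(g^{p-1})(x).k_2 = 2(p-1)g(x)^{p-2}\langle x-x^\star, k_2\rangle$, and since the derivative of $x \mapsto \langle x - x^\star, k_1\rangle$ in the direction $k_2$ is simply $\langle k_1, k_2\rangle$, this yields the exact formula
\[
D^2\psi_p(x).(k_1,k_2) = 4p(p-1)\|x-x^\star\|^{2(p-2)}\langle x-x^\star, k_1\rangle\langle x-x^\star, k_2\rangle + 2p\|x-x^\star\|^{2(p-1)}\langle k_1, k_2\rangle.
\]

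Finally I would bound each term using Cauchy--Schwarz, namely $|\langle x-x^\star, k_i\rangle| \le \|x-x^\star\|\,\|k_i\|$ for $i\in\{1,2\}$ and $|\langle k_1, k_2\rangle| \le \|k_1\|\,\|k_2\|$. The first term is then controlled by $4p(p-1)\|x-x^\star\|^{2(p-1)}\|k_1\|\|k_2\|$ and the second by $2p\|x-x^\star\|^{2(p-1)}\|k_1\|\|k_2\|$; adding these and using $4p(p-1)+2p = 2p(2p-1)$ gives precisely the asserted bound~\eqref{borne:hessienne:norme^p}. I expect no genuine obstacle here: the argument is a routine chain-rule computation, and the only point requiring a little care is the bookkeeping of the constants when the two Hessian contributions are combined.
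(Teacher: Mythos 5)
Your proof is correct and is precisely the routine chain-and-product-rule computation that the paper leaves implicit: the Fact is stated there without proof as an elementary remark inside the proof of Lemma~\ref{lem:momentbounds-scheme}, and your exact Hessian formula together with Cauchy--Schwarz and the identity $4p(p-1)+2p=2p(2p-1)$ is the intended argument. The only degenerate point worth a word is $p=1$, where the factor $g(x)^{p-2}$ is formally undefined at $x=x^\star$; since it carries the coefficient $4p(p-1)=0$ (the derivative of the constant $g^{0}$ vanishes identically), the formula and the bound remain valid.
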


Applying the Taylor formula, one obtains
\begin{align*}
\|X_{n+1}-x^\star\|^{2p}&=\psi_p(X_{n+1})=\psi_p(X_n+\Delta X_{n+1})\\
&=\psi_p(X_n)+D\psi_p(X_n).\Delta X_{n+1}\\
&\ \ \ \  +\int_{0}^{1}(1-\theta)D^2\psi_p(X_n+\theta \Delta X_{n+1}).(\Delta X_{n+1},\Delta X_{n+1}) \,d\theta\\
&\le\|X_n-x^\star\|^{2p}+2p\|X_n-x^\star\|^{2(p-1)}\langle X_n-x^\star,\Delta X_{n+1}\rangle\\
&+2p^2\bigl(\|X_n-x^\star\|+\|\Delta X_{n+1}\|\bigr)^{2(p-1)}\|\Delta X_{n+1}\|^2.
\end{align*}
We first consider the term with the inner product in the first line. Like in the treatment of the case $p=1$, considering the conditional expectation with respect to $\mathcal{F}_n$ one has
\begin{align*}
\E[2p\|X_n-x^\star\|^{2(p-1)}\langle X_n-x^\star,\Delta X_{n+1}\rangle|\mathcal{F}_n]&=2p\|X_n-x^\star\|^{2(p-1)}\E[\langle X_n-x^\star,\Delta X_{n+1}\rangle|\mathcal{F}_n]\\
&\le -2p\mu \|X_n-x^\star\|^{2p}.
\end{align*}
For the reminder term on the second line, the convexity inequality $(a+b)^q\leq 2^{q-1} (a^q+b^q)$ infers the upper bound
\[
\bigl(\|X_n-x^\star\|+\|\Delta X_{n+1}\|\bigr)^{2(p-1)}\|\Delta X_{n+1}\|^2\le 2^{2p-3}\bigl(\|X_n-x^\star\|^{2(p-1)}\|\Delta X_{n+1}\|^2+\|\Delta X_{n+1}\|^{2p}\bigr).
\]
Employing the linear growth conditions~\eqref{eq:assF1-growth} and~\eqref{eq:asssigma1growth} on $\nabla F$ and on $\sigma$ respectively, then yields the upper bound 
\[
\|\Delta X_{n+1}\|\le Ch(1+\|X_n-x^\star\|)(1+\|\gamma_{n+1}\|).
\]
As a result, one obtains the following upper bounds on conditional expectations: there exists a positive real number $C_p\in(0,\infty)$ such that
\begin{align*}
\E[\|X_n-x^\star\|^{2(p-1)}\|\Delta X_{n+1}\|^2|\mathcal{F}_n]&=\|X_n-x^\star\|^{2(p-1)}\E[\|\Delta X_{n+1}\|^2|\mathcal{F}_n]\\
&\le C_ph^2(1+\|X_n-x^\star\|^{2p})\\
\E[\|\Delta X_{n+1}\|^{2p}|\mathcal{F}_n]&\le C_ph^2(1+\|X_n-x^\star\|^{2p}).
\end{align*}
Combining the results above, for all $n\in\N$ one obtains
\[
\E[\|X_{n+1}-x^\star\|^{2p}|\mathcal{F}_n]\le \Bigl(1-2p\mu h+C_ph^2\Bigr)\|X_n-x^\star\|^{2p}+C_ph^2,
\]
For all $h\in(0,h_{\rm max})$ set
\[
\rho_p(h)=1-2p\mu h+C_ph^2.
\]
The tower property of conditional expectation yields the identity $$\E[\E[\|X_{n+1}-x^\star\|^{2p}|\mathcal{F}_n]]=\E[\|X_{n+1}-x^\star\|^{2p}],$$
thus taking expectation, for all $n\in\N$ one has
\[
\E[\|X_{n+1}-x^\star\|^{2p}]\le \rho_p(h)\E[\|X_n-x^\star\|^{2p}]+C_ph^2,
\]
Set
\[
H_p=\frac{1}{2}\min\Bigl(\frac{1}{2p\mu},\frac{2p\mu}{C_p},h_{\rm max}\Bigr),
\]
then for all $h\in(0,H_p)$ one has $\rho_p(h)\in(0,1)$. Applying the discrete Gr\"onwall inequality, one obtains for all $n\ge 0$ and for all $h\in(0,H_p)$
\begin{align*}
\E[\|X_{n}-x^\star\|^{2p}]&\le \rho_p(h)^n\|x_0-x^\star\|^{2p}+\frac{C_ph^2}{1-\rho_p(h)}\\
&\le \|x_0-x^\star\|^{2p}+\frac{C_ph}{2p\mu-C_ph}\\
&\le \|x_0-x^\star\|^{2p}+\frac{C_pH_p}{p\mu}.
\end{align*}
This concludes the proof of the inequality~\eqref{eq:momentbounds-scheme} in the case $p\ge 2$.
The proof of Lemma~\ref{lem:momentbounds-scheme} is thus completed.
\end{proof}

\subsection{Auxiliary process}

Given any time-step size $h\in(0,h_{\rm max})$, let us introduce an auxiliary stochastic process $\bigl(\tilde{X}^h(t)\bigr)_{t\ge 0}$ defined as follows: for any integer $n\ge 0$, and for all $t\in[t_n,t_{n+1}]$ one has
\begin{equation}\label{eq:tildeX}
\tilde{X}^h(t)=X_n-\nabla F(X_n)(t-t_n)+\sqrt{h}\sigma(t_n,X_n)\bigl(B(t)-B(t_n)\bigr).
\end{equation}
Note that $\tilde{X}^h(t_n)=X_n$ for all $n\in\N$, i.e. the auxiliary process interpolates the stochastic gradient optimization scheme~\eqref{eq:scheme} at the grid times $t_n=nh$. In addition, the stochastic process $\bigl(\tilde{X}^h(t)\bigr)_{t\ge 0}$ has continuous trajectories. Finally, for any integer $n\in\N$, on the interval $[t_n,t_{n+1}]$ the process $t\mapsto \tilde{X}^h(t)$ is the solution of the stochastic differential equation
\begin{equation}\label{eq:tildeX-SDE}
\left\lbrace
\begin{aligned}
&d\tilde{X}^h(t)=-\nabla F(X_n)dt+\sqrt{h}\sigma(t_n,X_n)dB(t),\quad t\in[t_n,t_{n+1}],\\
&\tilde{X}^h(t_n)=X_n.
\end{aligned}
\right.
\end{equation}

The auxiliary process $\bigl(\tilde{X}^h(t)\bigr)_{t\ge 0}$ plays a technical role in the proofs of the weak error estimates. The following properties, which follow from Lemma~\ref{lem:momentbounds-scheme}, are employed in the error analysis.
\begin{lemma}\label{lem:tildeX}
Assume that the objective $F$ satisfies the basic properties of Assumptions~\ref{ass:F1} and that the diffusion coefficient $\sigma$ satisfies Assumptions \ref{ass:sigma1}. For all $p\in\N^\star$, there exists $H_p\in(0,h_{\rm max})$ and $C_p\in(0,\infty)$ such that one has the moment bounds
\begin{equation}\label{eq:tildeX-momentbounds}
\underset{h\in(0,H_p)}\sup~\underset{t\ge 0}\sup~\E[\|\tilde{X}^h(t)-x^\star\|^{2p}]\le C_p(1+\|x_0-x^\star\|^{2p})
\end{equation}
and such that one has the increment bounds: for all $h\in(0,H_p)$,
\begin{equation}\label{eq:tildeX-increments}
\underset{n\in\N}\sup~\underset{t\in[t_n,t_{n+1}]}\sup~\E[\|\tilde{X}^h(t)-X_n\|^{2p}]\le C_p(1+\|x_0-x^\star\|^{2p})h^{2p}.
\end{equation}
\end{lemma}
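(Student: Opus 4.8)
The plan is to establish the increment bound~\eqref{eq:tildeX-increments} first, and then to deduce the moment bound~\eqref{eq:tildeX-momentbounds} from it combined with Lemma~\ref{lem:momentbounds-scheme}. Throughout, I would fix $p\in\N^\star$ and let $H_p$ and $C_p$ be the quantities provided by Lemma~\ref{lem:momentbounds-scheme}, allowing the value of $C_p$ to increase from line to line. Fixing $n\in\N$ and $t\in[t_n,t_{n+1}]$, so that $0\le t-t_n\le h$, the definition~\eqref{eq:tildeX} yields the decomposition
\begin{equation*}
\tilde{X}^h(t)-X_n=-\nabla F(X_n)(t-t_n)+\sqrt{h}\,\sigma(t_n,X_n)\bigl(B(t)-B(t_n)\bigr),
\end{equation*}
which separates a drift contribution and a diffusion contribution to be treated independently.

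To treat the increment bound, I would apply the elementary inequality $\|u+v\|^{2p}\le 2^{2p-1}(\|u\|^{2p}+\|v\|^{2p})$ to this decomposition. For the drift term, the growth bound~\eqref{eq:assF1-growth} gives $\|\nabla F(X_n)\|^{2p}\le \LipF^{2p}\|X_n-x^\star\|^{2p}$, and using $(t-t_n)^{2p}\le h^{2p}$ together with the moment bound of Lemma~\ref{lem:momentbounds-scheme} yields $\E[\|\nabla F(X_n)(t-t_n)\|^{2p}]\le C_p(1+\|x_0-x^\star\|^{2p})h^{2p}$. For the diffusion term, the crucial observation is that $\sigma(t_n,X_n)$ is $\mathcal{F}_{t_n}$-measurable whereas the increment $B(t)-B(t_n)$ is independent of $\mathcal{F}_{t_n}$. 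Conditioning on $\mathcal{F}_{t_n}$ and bounding $\|\sigma(t_n,X_n)(B(t)-B(t_n))\|^{2p}\le \|\sigma(t_n,X_n)\|^{2p}\|B(t)-B(t_n)\|^{2p}$, I would use that $\E[\|B(t)-B(t_n)\|^{2p}]=c_{d,p}(t-t_n)^p\le c_{d,p}h^p$ for a finite constant $c_{d,p}$ depending only on $d$ and $p$ (moments of a centered Gaussian vector). Since the diffusion term carries an additional prefactor $h^p$ from $\sqrt{h}$, this produces the order $h^p\cdot h^p=h^{2p}$. Combined with the growth bound~\eqref{eq:asssigma1growth}, the boundedness of $\varsigma$, and Lemma~\ref{lem:momentbounds-scheme}, this gives $\E[\|\sqrt{h}\,\sigma(t_n,X_n)(B(t)-B(t_n))\|^{2p}]\le C_p(1+\|x_0-x^\star\|^{2p})h^{2p}$. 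Adding the two contributions and taking the supremum over $n\in\N$ and $t\in[t_n,t_{n+1}]$ establishes~\eqref{eq:tildeX-increments}.

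To obtain the moment bound~\eqref{eq:tildeX-momentbounds}, I would write $\tilde{X}^h(t)-x^\star=(X_n-x^\star)+(\tilde{X}^h(t)-X_n)$ for $t\in[t_n,t_{n+1}]$ and again apply $\|u+v\|^{2p}\le 2^{2p-1}(\|u\|^{2p}+\|v\|^{2p})$. The first term is controlled uniformly in $n$ and $h$ by Lemma~\ref{lem:momentbounds-scheme}, while the second is bounded by the increment estimate~\eqref{eq:tildeX-increments}, in which $h^{2p}\le h_{\rm max}^{2p}$ is itself uniformly bounded. This yields~\eqref{eq:tildeX-momentbounds} with a constant uniform in $t\ge 0$ and $h\in(0,H_p)$. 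The only mild subtlety in the whole argument is the measurability and independence bookkeeping for the diffusion term, which is precisely what allows the Gaussian increment to be decoupled from $\sigma(t_n,X_n)$; everything else follows by routine use of the growth bounds and Lemma~\ref{lem:momentbounds-scheme}, and uniformity in time is inherited directly from the uniform-in-time constant supplied by that lemma.
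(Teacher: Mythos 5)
Your proposal is correct and takes essentially the same approach as the paper: both rest on the decomposition from~\eqref{eq:tildeX}, the linear growth bounds~\eqref{eq:assF1-growth} and~\eqref{eq:asssigma1growth}, the Gaussian scaling $\E[\|B(t)-B(t_n)\|^{2p}]\le c_{d,p}h^p$ (with independence of the increment from $\mathcal{F}_{t_n}$), and the uniform moment bounds of Lemma~\ref{lem:momentbounds-scheme}. The only differences are cosmetic: you work with raw moments via the convexity inequality and prove the increment bound first, deducing the moment bound from it, whereas the paper works with $L^{2p}$-norms via Minkowski's inequality and proves both bounds directly.
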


\begin{proof}[Proof of Lemma~\ref{lem:tildeX}]
Let $p\in\N^\star$ and let $H_p$ be given by Lemma~\ref{lem:momentbounds-scheme}. 

\emph{Proof of the moments estimates \eqref{eq:tildeX-momentbounds}.} For all $h\in(0,H_p)$, for any integer $n\ge 0$, owing to the definition \eqref{eq:tildeX} of the interpolating process $\tilde{X}^h(t)$, one has for all $t\in[t_n,t_{n+1}]$
\begin{align*}
\bigl(\E[\|\tilde{X}^h(t)-x^\star\|^{2p}]\bigr)^{\frac{1}{2p}}&\le \bigl(\E[\|X_n-x^\star\|^{2p}]\bigr)^{\frac{1}{2p}}+(t-t_n)\bigl(\E[\|\nabla F(X_n)\|^{2p}]\bigr)^{\frac{1}{2p}}\\
& \ \ \ +\sqrt{h}\bigl(\E[\|\sigma(t_n,X_n)\bigl(B(t)-B(t_n)\bigr)\|^{2p}]\bigr)^{\frac{1}{2p}}.
\end{align*}
Then by the linear growth conditions~\eqref{eq:assF1-growth} and~\eqref{eq:asssigma1growth},
$$
\bigl(\E[\|\tilde{X}^h(t)-x^\star\|^{2p}]\bigr)^{\frac{1}{2p}}\le (1+\LipF h) \bigl(\E[\|X_n-x^\star\|^{2p}]\bigr)^{\frac{1}{2p}}\ +C_ph\|\varsigma\|_\infty\Bigl(1+\bigl(\E[\|X_n-x^\star\|^{2p}]\bigr)^{\frac{1}{2p}}\Bigr),
$$
using the property
\[
\bigl(\E[\|B(t)-B(t_n)\|^{2p}]\bigr)^{\frac{1}{2p}}=C_p(t-t_n)^{\frac12}\le C_ph^{\frac12}.
\]
The moment bounds~\eqref{eq:momentbounds-scheme} from Lemma~\ref{lem:momentbounds-scheme} then imply that for all $h\in(0,H_p)$ one has \begin{align*}
\underset{t\ge 0}\sup~\bigl(\E[\|\tilde{X}^h(t)-x^\star\|^{2p}]\bigr)^{\frac{1}{2p}}&=\underset{n\ge 0}\sup~\underset{t\in[t_n,t_{n+1}]}\sup~\bigl(\E[\|\tilde{X}^h(t)-x^\star\|^{2p}]\bigr)^{\frac{1}{2p}}\\
&\le C_p \Bigl(1+\underset{n\ge 0}\sup~\bigl(\E[\|X_n-x^\star\|^{2p}]\bigr)^{\frac{1}{2p}}\Bigr)\\
&\le C_p\Bigl(1+\|x_0-x^\star\|\Bigr).
\end{align*}
This concludes the proof of the moment bounds~\eqref{eq:tildeX-momentbounds}.

\emph{Proof of the increments bounds \eqref{eq:tildeX-increments}.} One proceeds by using similar arguments. Let $h\in(0,H_p)$ and $n\ge 0$. Then for all $t\in[t_n,t_{n+1}]$ owing to~\eqref{eq:tildeX} one has
\begin{align*}
\bigl(\E[\|\tilde{X}^h(t)-X_n\|^{2p}]\bigr)^{\frac{1}{2p}}&\le (t-t_n)\bigl(\E[\|\nabla F(X_n)\|^{2p}]\bigr)^{\frac{1}{2p}}+\sqrt{h}\bigl(\E[\|\sigma(t_n,X_n)\bigl(B(t)-B(t_n)\bigr)\|^{2p}]\bigr)^{\frac{1}{2p}}\\
&\le \LipF h\bigl(\E[\|X_n-x^\star\|^{2p}]\bigr)^{\frac{1}{2p}}+C_ph\|\varsigma\|_\infty\Bigl(1+\bigl(\E[\|X_n-x^\star\|^{2p}]\bigr)^{\frac{1}{2p}}\Bigr)\\
&\le C_ph \Bigl(1+\underset{n\ge 0}\sup~\bigl(\E[\|X_n-x^\star\|^{2p}]\bigr)^{\frac{1}{2p}}\Bigr)\\
&\le C_p h\Bigl(1+\|x_0-x^\star\|\Bigr).
\end{align*}
This concludes the proof of the increment bounds~\eqref{eq:tildeX-increments}. The proof of Lemma~\ref{lem:tildeX} is thus completed.
\end{proof}

\section{Proof of Theorem~\ref{theo:1}}\label{sec:proof1}

This section is devoted to the detailed proof of Theorem~\ref{theo:1}. In the sequel, it is assumed that the objective function $F$ satisfies the basic Assumption~\ref{ass:F1}, and that the diffusion coefficient $\sigma$ satisfies the basic Assumption~\ref{ass:sigma1}.

Let us explain the strategy of the proof. Let $\varphi:\R^d\to\R$ be a mapping of class $\mathcal{C}^2$. The error is decomposed as
\begin{equation}\label{eq:decomperror1}
\E[\varphi(X_N)]-\varphi(x^\star)=\varphi(\X^0(Nh))-\varphi(x^\star)+\E[\varphi(X_N)]-\varphi(\X^0(Nh)),
\end{equation}
where $\bigl(\X^0(t)\bigr)_{t\ge 0}$ is the solution of the deterministic gradient system~\eqref{eq:ODE1}. On the one hand, the treatment of the first error term $\varphi(\X^0(Nh))-\varphi(x^\star)$ follows from the exponential convergence property~\eqref{eq:cvX0} of $\X^0(T)-x^\star$ when $T\to\infty$, with $T=t_N=Nh$. On the other hand, the treatment of the second error term $\E[\varphi(X_N)]-\varphi(\X^0(Nh))$ requires more effort. Introduce the auxiliary function $u^0:[0,\infty)\times\R^d\to\R$ given by
\begin{equation}\label{eq:u}
u^0(t,x)=\varphi(\X_x^0(t)),
\end{equation}
where $\bigl(\X_x^0(t)\bigr)_{t\ge 0}$ denotes the solution of~\eqref{eq:ODE1} with arbitrary initial value $\X_x^0(0)=x\in\R^d$. As a result, the second error term is written as
\begin{equation}\label{eq:decomperror11}
\E[\varphi(X_N)]-\varphi(\X^0(Nh))=\E[u^0(0,X_N)]-\E[u^0(t_N,X_0)].
\end{equation}
Then a standard telescoping sum argument provides the decomposition of the error as
\[
\E[u^0(0,X_N)]-\E[u^0(t_N,X_0)]=\sum_{n=0}^{N-1}\bigl(\E[u^0(t_N-t_{n+1},X_{n+1})]-\E[u^0(t_N-t_n,X_n)]\bigr).
\]
Recalling that $X_n=\tilde{X}^h(t_n)$ for all $n\in\{0,\ldots,N\}$, where the auxiliary continuous time process $\bigl(\tilde{X}^h(t)\bigr)_{t\ge 0}$ is defined by~\eqref{eq:tildeX}, one obtains
\begin{equation}\label{eq:decomperror11epsilon}
\E[u^0(0,X_N)]-\E[u^0(t_N,X_0)]=\sum_{n=0}^{N-1}\epsilon_n,
\end{equation}
where for all $n\in\{0,\ldots,N-1\}$ the error term $\epsilon_n$ is defined by
\begin{equation}\label{eq:epsilon}
\epsilon_n=\E[u^0(t_N-t_{n+1},\tilde{X}^h(t_{n+1}))]-\E[u^0(t_N-t_n,\tilde{X}^h(t_n))].
\end{equation}
Then one proceeds by applying the It\^o formula, using~\eqref{eq:tildeX-SDE}. Error bounds for $\epsilon_n$ are obtained in the proof of Proposition~\ref{propo:schemeODE}. To analyze the error terms, it is fundamental to observe that the function $u^0$ is solution of the linear transport partial differential equation
\begin{equation}\label{eq:Kolmogorov_u}
\left\lbrace
\begin{aligned}
&\partial_tu^0(t,x)=-\langle \nabla u^0(t,x),\nabla F(x)\rangle,\quad \forall~t\ge 0, x\in\R^d,\\
&u^0(0,x)=\varphi(x),\quad \forall~x\in\R^d.
\end{aligned}
\right.
\end{equation}
The reminder of this section is organized as follows. Regularity properties of the function $u^0$ are stated in Section~\ref{sec:u0}, with proofs postponed to Appendix~\ref{app:lemu}. The application of those results to obtain upper bounds on the second error term appearing on the right-hand side of~\eqref{eq:decomperror1}, following the strategy outlined above, is then presented in Section~\ref{sec:error1}. Finally, Section~\ref{sec:conclusion-proof1} combines the upper bounds on the error terms and provides the proof of Theorem~\ref{theo:1}.

\subsection{Regularity properties of the auxiliary function $u^0$}\label{sec:u0}

\begin{lemma}\label{lem:u}
Assume that the objective function $F$ satisfies the basic properties from Assumptions~\ref{ass:F1}. Let $\varphi:\R^d\to\R$ be of class $\mathcal{C}^2$ with bounded second order derivative.

For all $t\ge 0$ the mapping $u^0(t,\cdot)$ defined by~\eqref{eq:u} is of class $\mathcal{C}^2$. Moreover,  there exists a positive real number $C^0\in(0,\infty)$ such that the first order derivative satisfies for all $t\ge 0$
\begin{equation}\label{eq:lemu1}
\underset{x\in\R^d}\sup~\underset{k\in\R^d\setminus\{0\}}\sup~\frac{|Du^0(t,x).k|}{(1+\|x-x^\star\|)\|k\|}\le C^0\bigl(\|\nabla \varphi(x^\star)\|+\vvvert\varphi\vvvert_2\bigr)e^{-\mu t},
\end{equation}
and such that the second order derivative satisfies for all $t\ge 0$
\begin{equation}\label{eq:lemu2}
\underset{x\in\R^d}\sup~\underset{k_1,k_2\in\R^d\setminus\{0\}}\sup~\frac{|D^2u^0(t,x).(k_1,k_2)|}{(1+\|x-x^\star\|)\|k_1\|\|k_2\|} \le C^0\bigl(\|\nabla \varphi(x^\star)\|+\vvvert\varphi\vvvert_2\bigr)e^{-\mu t}.
\end{equation}
\end{lemma}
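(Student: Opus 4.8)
The plan is to establish the regularity of $u^0(t,\cdot)$ by differentiating the flow map $x\mapsto\X_x^0(t)$ associated with the gradient system~\eqref{eq:ODE1} with respect to its initial condition, and then to transfer the resulting decay estimates to $u^0$ via the chain rule. Since $F$ is of class $\mathcal{C}^3$, standard results on smooth dependence of solutions of ordinary differential equations on initial data guarantee that $x\mapsto\X_x^0(t)$ is of class $\mathcal{C}^2$, hence so is $u^0(t,\cdot)=\varphi(\X_x^0(t))$ when $\varphi$ is of class $\mathcal{C}^2$. I introduce the first variation $\eta^k(t)=D\X_x^0(t).k$ and the second variation $\zeta^{k_1,k_2}(t)=D^2\X_x^0(t).(k_1,k_2)$. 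Differentiating~\eqref{eq:ODE1} in the initial condition yields the linear variational equations
\begin{align*}
\frac{d}{dt}\eta^k(t)&=-\nabla^2F(\X_x^0(t))\eta^k(t),\qquad \eta^k(0)=k,\\
\frac{d}{dt}\zeta^{k_1,k_2}(t)&=-\nabla^2F(\X_x^0(t))\zeta^{k_1,k_2}(t)-D^3F(\X_x^0(t)).(\eta^{k_1}(t),\eta^{k_2}(t),\cdot),\qquad \zeta^{k_1,k_2}(0)=0.
\end{align*}

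For the first derivative bound~\eqref{eq:lemu1}, I first estimate $\eta^k$. Computing $\frac12\frac{d}{dt}\|\eta^k(t)\|^2=-\langle\nabla^2F(\X_x^0(t))\eta^k(t),\eta^k(t)\rangle$ and invoking the lower bound $\langle\nabla^2F(x)k,k\rangle\ge\mu\|k\|^2$ from~\eqref{eq:assF1-muconvex-bis}, I obtain $\frac{d}{dt}\|\eta^k(t)\|^2\le-2\mu\|\eta^k(t)\|^2$, so Gr\"onwall's lemma gives $\|\eta^k(t)\|\le e^{-\mu t}\|k\|$. The chain rule yields $Du^0(t,x).k=D\varphi(\X_x^0(t)).\eta^k(t)$, and applying the growth bound~\eqref{eq:ineqDvarphi} together with the exponential convergence $\|\X_x^0(t)-x^\star\|\le e^{-\mu t}\|x-x^\star\|$ from~\eqref{eq:cvX0} (applied with general initial value $x$) gives
\[
|Du^0(t,x).k|\le\bigl(\|\nabla\varphi(x^\star)\|+\vvvert\varphi\vvvert_2\, e^{-\mu t}\|x-x^\star\|\bigr)e^{-\mu t}\|k\|,
\]
from which~\eqref{eq:lemu1} follows after bounding the prefactor by $(\|\nabla\varphi(x^\star)\|+\vvvert\varphi\vvvert_2)(1+\|x-x^\star\|)$.

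For the second derivative bound~\eqref{eq:lemu2}, I estimate $\zeta^{k_1,k_2}$ by the same energy method. Using again the lower bound on $\nabla^2F$, the boundedness of $D^3F$ from Assumption~\ref{ass:F1}(iii), and the decay $\|\eta^{k_i}(t)\|\le e^{-\mu t}\|k_i\|$ already obtained, I get $\frac{d}{dt}\|\zeta^{k_1,k_2}(t)\|\le-\mu\|\zeta^{k_1,k_2}(t)\|+Ce^{-2\mu t}\|k_1\|\|k_2\|$. The Duhamel representation then gives $\|\zeta^{k_1,k_2}(t)\|\le Ce^{-\mu t}\int_0^t e^{-\mu s}\,ds\,\|k_1\|\|k_2\|\le\frac{C}{\mu}e^{-\mu t}\|k_1\|\|k_2\|$. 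Differentiating the chain-rule identity for $Du^0$ once more yields
\[
D^2u^0(t,x).(k_1,k_2)=\langle\nabla^2\varphi(\X_x^0(t))\eta^{k_1}(t),\eta^{k_2}(t)\rangle+D\varphi(\X_x^0(t)).\zeta^{k_1,k_2}(t).
\]
The first term is bounded by $\vvvert\varphi\vvvert_2\, e^{-2\mu t}\|k_1\|\|k_2\|$, and the second is controlled exactly as in the first-order step using~\eqref{eq:ineqDvarphi} and~\eqref{eq:cvX0}, which after combining gives~\eqref{eq:lemu2}.

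The main obstacle is the treatment of the second variation $\zeta^{k_1,k_2}$: its equation is inhomogeneous, with a forcing term $-D^3F(\X_x^0(t)).(\eta^{k_1}(t),\eta^{k_2}(t),\cdot)$ that is quadratic in the first variations. The crucial structural observation making the uniform-in-time estimate work is that this forcing decays like $e^{-2\mu t}$, strictly faster than the intrinsic decay $e^{-\mu t}$ of the homogeneous linear part, so that the Duhamel integral $\int_0^t e^{-\mu s}\,ds$ stays bounded uniformly in $t$; this is precisely what preserves the overall $e^{-\mu t}$ decay rate. A secondary point requiring care is that $\varphi$ is assumed only to have bounded \emph{second} derivative, so $\nabla\varphi$ may grow linearly; the factor $(1+\|x-x^\star\|)$ in both bounds arises exactly from controlling this linear growth through~\eqref{eq:ineqDvarphi}, combined with the contraction of trajectories toward $x^\star$.
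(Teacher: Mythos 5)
Your proof is correct and follows the same overall skeleton as the paper's (Appendix~\ref{app:lemu}): first and second variation equations for the flow, energy estimates driven by the lower bound $\langle \nabla^2F(x)k,k\rangle\ge\mu\|k\|^2$, and the chain rule combined with~\eqref{eq:ineqDvarphi}. The genuine difference is in the treatment of the second variation $\zeta^{k_1,k_2}$. The paper works with $\frac{d}{dt}\|\zeta\|^2$ and absorbs the cross term $C\|\eta^{k_1}\|\|\eta^{k_2}\|\|\zeta\|$ by Young's inequality, which degrades the decay rate to $\mu/2$ on a first pass; it then needs a second pass, feeding the preliminary bound $\|\zeta(t)\|\le Ce^{-\mu t/2}\|k_1\|\|k_2\|$ back into the differential inequality to recover the full rate $e^{-\mu t}$. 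You instead pass directly to a differential inequality for the norm itself, $\frac{d}{dt}\|\zeta\|\le-\mu\|\zeta\|+Ce^{-2\mu t}\|k_1\|\|k_2\|$, and conclude in one step via Duhamel, exploiting exactly the structural fact you highlight: the forcing decays strictly faster than the homogeneous rate, so $\int_0^t e^{-\mu s}\,ds$ is uniformly bounded. Your route is shorter and makes the mechanism more transparent; its only cost is the standard technical caveat that $t\mapsto\|\zeta(t)\|$ need not be differentiable where $\zeta$ vanishes (in particular at $t=0$, since $\zeta(0)=0$), so the inequality should be understood in the sense of Dini derivatives, or obtained equivalently by writing $\zeta(t)=\int_0^t\Phi(t,s)f(s)\,ds$ with the transition operator bound $\|\Phi(t,s)\|\le e^{-\mu(t-s)}$, which follows from the same energy estimate applied to the homogeneous equation. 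With that routine justification added, your argument is complete and, if anything, cleaner than the paper's two-pass bootstrap.
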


The proof of Lemma~\ref{lem:u} is given in Appendix~\ref{app:lemu}.

\subsection{Proof of weak error estimates}\label{sec:error1}

We now focus on the main task of the proof of Theorem~\ref{theo:1}, which consists in proving error estimates for $\E[\varphi(X_N)]-\varphi(\X^0(Nh))$, uniformly with respect to $N$.

\begin{propo}\label{propo:schemeODE}
Assume that the objective function $F$ satisfies the basic Assumption~\ref{ass:F1}, and that the diffusion coefficient $\sigma$ satisfies the basic Assumption~\ref{ass:sigma1}.

There exist positive real numbers $H\in(0,h_{\max})$ and $C\in(0,\infty)$ such that for any mapping $\varphi:\R^d\to\R$ of class $\mathcal{C}^2$ with bounded second order derivative, for any initial value $x_0\in\R^d$, and for any $h\in(0,H)$, one has the weak error estimate
\begin{equation}\label{eq:proposchemeODE}
\underset{N\in\N^\star}\sup~\big|\E[\varphi(X_N)]-\varphi(\X^0(Nh))\big|\le  C_1(\varphi,x_0)h,
\end{equation}
where the positive real number $C_1(\varphi,x_0)$ is given by~\eqref{eq:constantC1}.
\end{propo}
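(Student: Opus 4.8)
The plan is to follow the telescoping decomposition already set up in the excerpt: write the error $\E[\varphi(X_N)]-\varphi(\X^0(Nh))$ as the sum $\sum_{n=0}^{N-1}\epsilon_n$ of local error terms defined in~\eqref{eq:epsilon}, bound each $\epsilon_n$ individually, and then sum the bounds. The crucial point is that the per-step bound on $\epsilon_n$ must carry an exponential decay factor $e^{-\mu(t_N-t_{n+1})}$ coming from the regularity estimates for $u^0$ in Lemma~\ref{lem:u}; this is what makes the summation converge to an $\mathrm{O}(h)$ bound that is \emph{uniform} in $N$, rather than the usual $\mathrm{O}(Nh^2)=\mathrm{O}(Th)$ bound that grows with the time horizon.

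First I would fix $n\in\{0,\dots,N-1\}$ and apply the It\^o formula to $s\mapsto u^0(t_N-s,\tilde{X}^h(s))$ on the interval $[t_n,t_{n+1}]$, using the dynamics~\eqref{eq:tildeX-SDE} of the auxiliary process. Since $u^0$ solves the transport equation~\eqref{eq:Kolmogorov_u}, the time-derivative term $\partial_t u^0 = -\langle\nabla u^0,\nabla F\rangle$ will cancel against the part of the drift corresponding to $-\nabla F(\tilde{X}^h(s))$; what survives is the difference between $\nabla F$ evaluated at the frozen point $X_n$ (appearing in~\eqref{eq:tildeX-SDE}) and at $\tilde{X}^h(s)$, together with the second-order It\^o term involving $a(t_n,X_n)=\sigma\sigma^\star$ contracted against $\nabla^2 u^0$. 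The martingale part has zero expectation. Thus $\epsilon_n$ reduces to the expectation of two contributions: a drift-mismatch term controlled by $\|\nabla F(\tilde{X}^h(s))-\nabla F(X_n)\|$ (hence by $\LipF\|\tilde{X}^h(s)-X_n\|$) multiplied by $\nabla u^0$, and a diffusion term of size $h$ (the factor $h$ arising because the diffusion coefficient is $\sqrt h\sigma$) multiplied by $\nabla^2 u^0$.

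Next I would insert the estimates from Lemma~\ref{lem:u}: both $|Du^0(t_N-s,x).k|$ and $|D^2u^0(t_N-s,x).(k_1,k_2)|$ are bounded by $C^0(\|\nabla\varphi(x^\star)\|+\vvvert\varphi\vvvert_2)e^{-\mu(t_N-s)}(1+\|x-x^\star\|)$ times the appropriate norms of the directions. Combining this with the increment bound~\eqref{eq:tildeX-increments} giving $\bigl(\E[\|\tilde{X}^h(s)-X_n\|^{2p}]\bigr)^{1/2p}\le C_p h(1+\|x_0-x^\star\|)$ and the moment bound~\eqref{eq:tildeX-momentbounds}, and applying Cauchy--Schwarz to separate the $u^0$-derivative factor from the process factors, each $\epsilon_n$ should satisfy a bound of the form
\[
|\epsilon_n|\le C\bigl(\|\nabla\varphi(x^\star)\|+\vvvert\varphi\vvvert_2\bigr)\bigl(1+\|x_0-x^\star\|^3\bigr)\,e^{-\mu(t_N-t_{n+1})}\,h^2,
\]
where the polynomial growth in $\|x_0-x^\star\|$ comes from pairing the $(1+\|x-x^\star\|)$ weight in Lemma~\ref{lem:u} with the $h$-sized increments and requires moment bounds up to order three (so that $h\in(0,H_3)$ suffices, as noted after Lemma~\ref{lem:momentbounds-scheme}). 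Summing over $n$ and using $\sum_{n=0}^{N-1}e^{-\mu(t_N-t_{n+1})}h\le \sum_{n}e^{-\mu(N-1-n)h}h\le \frac{1}{1-e^{-\mu h}}h\le \frac{C}{\mu}$ for $h$ small (a geometric series whose sum is bounded uniformly in $N$) converts one factor of $h$ into a uniform constant and leaves the remaining single power of $h$, yielding~\eqref{eq:proposchemeODE} with the stated constant $C_1(\varphi,x_0)$ from~\eqref{eq:constantC1}.

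The main obstacle I anticipate is the bookkeeping in the It\^o expansion of $\epsilon_n$: one must expand $\nabla F(\tilde X^h(s))-\nabla F(X_n)$ carefully (ideally via a further Taylor expansion or a direct Lipschitz bound combined with the quadratic variation) to see that the leading drift-mismatch contribution is genuinely $\mathrm{O}(h^2)$ per step and not merely $\mathrm{O}(h)$, and to confirm that no lower-order term escapes the cancellation provided by the Kolmogorov equation~\eqref{eq:Kolmogorov_u}. The delicate quantitative point is ensuring the exponential weight $e^{-\mu(t_N-s)}$ is correctly tracked through every estimate so that the final geometric sum --- rather than a crude count of $N$ terms --- is what controls the total, since this is precisely the mechanism that produces uniformity in time and distinguishes the present result from the earlier non-uniform bounds in~\cite{DDPR,LiTaiE}.
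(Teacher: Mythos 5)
Your proposal is correct and takes essentially the same approach as the paper's proof: the same telescoping/It\^o decomposition of each $\epsilon_n$ into the drift-mismatch term and the It\^o correction term via the transport equation~\eqref{eq:Kolmogorov_u}, the same use of the exponential-decay bounds of Lemma~\ref{lem:u} combined with the moment and increment bounds of Lemma~\ref{lem:tildeX} and Cauchy--Schwarz, and the same summation mechanism (the paper integrates $\int_{t_n}^{t_{n+1}}e^{-\mu(t_N-t)}\,dt$ and sums to $\int_0^{t_N}e^{-\mu(t_N-t)}\,dt\le 1/\mu$, which is equivalent to your discrete geometric series). The concern raised in your final paragraph is not an actual obstacle: the direct Lipschitz bound on $\nabla F(\tilde{X}^h(t))-\nabla F(X_n)$ already yields the per-step $\mathrm{O}(h^2)$ bound, because the $\sqrt{h}$ scaling of the diffusion coefficient makes the increments of $\tilde{X}^h$ of size $\mathrm{O}(h)$ (not $\mathrm{O}(\sqrt{h})$) in $L^{2p}$, exactly as recorded in~\eqref{eq:tildeX-increments}.
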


In order to prove Proposition~\ref{propo:schemeODE}, we follow a standard approach employed to prove weak error estimates for numerical methods applied to stochastic differential equations, sketched at the beginning of Section~\ref{sec:proof1} above. Having exponential decrease with respect to $t$ in the bounds~\eqref{eq:lemu1} and~\eqref{eq:lemu2} of the first and second derivatives with respect to $x$ of the solution of the partial differential equation~\eqref{eq:u} is fundamental in order to obtain a right-hand side which is independent of $N$. The value of $H$ is imposed to apply some moment bounds on the solution of the stochastic optimization gradient scheme $\bigl(X_n\bigr)_{n\ge 0}$ given by Lemma~\ref{lem:momentbounds-scheme} and on the solution $\bigl(\tilde{X}^h(t)\bigr)_{t\ge 0}$ given by Lemma~\ref{lem:tildeX}.

\begin{proof}[Proof of Proposition~\ref{propo:schemeODE}]

Let the function $\varphi:\R^d\to\R$ and the initial value $x_0\in\R^d$ be given. Combining the expressions~\eqref{eq:decomperror11} and~\eqref{eq:decomperror11epsilon} for the weak error $\E[\varphi(X_N)]-\varphi(\X^0(Nh))$, it is sufficient to prove the following claim for the error terms $\epsilon_n$ defined by~\eqref{eq:epsilon}: for all $N\in\N^\star$ and $h\in(0,H)$, for all $n\in\{0,\ldots,N-1\}$, one has
\begin{equation}\label{eq:epsilon-claim}
|\epsilon_{n}|\le C_1(\varphi,x_0) h \int_{t_n}^{t_{n+1}}e^{-\mu(t_N-t)}\,dt,
\end{equation}
with $C_1(\varphi,x_0)$ defined by~\eqref{eq:constantC1} (up to a modification of the multiplicative constant $C\in(0,\infty)$).

Let us prove the claim~\eqref{eq:epsilon-claim} above. For all $N\in\N^\star$ and $n\in\{0,\ldots,N-1\}$, recall that $t\in[t_n,t_{n+1}]\mapsto \tilde{X}^{h}(t)$ is solution of the stochastic differential equation~\eqref{eq:tildeX-SDE}. Therefore, applying It\^o's formula on the interval $[t_n,t_{n+1}]$ yields the following decomposition of the error term $\epsilon_n$: one has
\begin{align*}
\epsilon_n&=\int_{t_n}^{t_{n+1}}\E[-\partial_t u^0(t_N-t,\tilde{X}^{h}(t))]dt-\int_{t_n}^{t_{n+1}}\E[\langle \nabla u^0(t_N-t,\tilde{X}^h(t)),\nabla F(X_n)\rangle]\,dt\\
&\ \ \ +\frac{h}{2}\int_{t_n}^{t_{n+1}}\E[\nabla^2 u^0(t_N-t,\tilde{X}^h(t)):a(t_n,\tilde{X}^h(t_n))]\,dt.
\end{align*}
Recall that the function $u^0$ is the solution of the partial differential equation~\eqref{eq:Kolmogorov_u}. This allows us to decompose $\epsilon_n$ as
\[
\epsilon_n=\epsilon_{n,1}+\epsilon_{n,2},
\]
where for all $n\in\{0,\ldots,N-1\}$ one defines
\begin{align*}
\epsilon_{n,1}&=\int_{t_n}^{t_{n+1}}\E[\langle \nabla u^0(t_N-t,\tilde{X}^h(t)),\nabla F(\tilde{X}^h(t))-\nabla F(X_n)\rangle]\,dt\\
\epsilon_{n,2}&=\frac{h}{2}\int_{t_n}^{t_{n+1}}\E[\nabla^2 u^0(t_N-t,\tilde{X}^h(t)):a(t_n,\tilde{X}^h(t_n))]\,dt.
\end{align*}
The claim~\eqref{eq:epsilon-claim} follows from the bounds for $\epsilon_{n,1}$ and $\epsilon_{n,2}$ which are obtained below.

\emph{Upper bounds for $\epsilon_{n,1}$.}
Recall that $\nabla F:\R^d\to\R^d$ is a Lipschitz continuous mapping (see Equation~\eqref{eq:assF1-Lip} from Assumption~\ref{ass:F1}). Therefore, using the inequality~\eqref{eq:lemu1} from Lemma~\ref{lem:u} on the first order derivative of $u(t_N-t,\cdot)$, one has
\[
|\epsilon_{n,1}|\le C(\|\nabla\varphi(x^\star)\|+\vvvert\varphi\vvvert_2)\int_{t_n}^{t_{n+1}}e^{-\mu(t_N-t)}\E[(1+\|\tilde{X}^h(t)-x^\star\|)\|\tilde{X}^h(t)-X_n\|]\,dt.
\]
Assume that $H\le H_1$ where $H_1\in(0,h_{\rm max})$ is given by Lemma~\ref{lem:tildeX}. Applying the Cauchy--Schwarz inequality and using the moment bounds~\eqref{eq:tildeX-momentbounds} and the increment bounds~\eqref{eq:tildeX-increments} from Lemma~\ref{lem:tildeX} for the auxiliary process $\bigl(\tilde{X}^h(t)\bigr)_{t\ge 0}$, one obtains the upper bounds
\begin{align*}
|\epsilon_{n,1}|&\le C(\|\nabla\varphi(x^\star)\|+\vvvert\varphi\vvvert_2) h (1+\|x_0-x^\star\|^2)\int_{t_n}^{t_{n+1}}e^{-\mu(t_N-t)}\,dt\\
&\le C_1(\varphi,x_0) h \int_{t_n}^{t_{n+1}}e^{-\mu(t_N-t)}\,dt,
\end{align*}
for all $N\in\N^\star$, all $n\in\{0,\ldots,N-1\}$ and all $h\in(0,H)$.

\emph{Upper bounds for $\epsilon_{n,2}$.}

Recall that the mapping $(t,x)\mapsto \sigma(t,x)$ has at most linear growth with respect to $x\in\R^d$, uniformly with respect to $t\ge 0$ (see Equation~\eqref{eq:asssigma1growth} from Assumption~\ref{ass:sigma1}). Recall also that $a$ is defined by~\eqref{eq:a}. Therefore, using the inequality~\eqref{eq:lemu2} from Lemma~\ref{lem:u} on the second order derivative of $u(t_N-t,\cdot)$, one has
\[
|\epsilon_{n,2}|\le C(\|\nabla\varphi(x^\star)\|+\vvvert\varphi\vvvert_2) h \varsigma(t_n)^2\int_{t_n}^{t_{n+1}} e^{-\mu(t_N-t)}\E[\bigl(1+\|\tilde{X}^h(t)-x^\star\|\bigr)\bigl(1+\|\tilde{X}^h(t_n)-x^\star\|\bigr)^2]\,dt.
\]
Recall that $\varsigma(t_n)\le \|\varsigma\|_\infty$ for all $n\in\N$. Assume that $H\le H_2$ where $H_2\in(0,h_{\rm max})$ is given by Lemma~\ref{lem:tildeX}. Applying the moment bounds~\eqref{eq:tildeX-momentbounds} from Lemma~\ref{lem:tildeX}, one obtains the upper bounds
\begin{align*}
|\epsilon_{n,2}|&\le C(\|\nabla\varphi(x^\star)\|+\vvvert\varphi\vvvert_2) h (1+\|x_0-x^\star\|^3)\int_{t_n}^{t_{n+1}}e^{-\mu(t_N-t)}\,dt\\
&\le C_1(\varphi,x_0) h \int_{t_n}^{t_{n+1}}e^{-\mu(t_N-t)}\,dt,
\end{align*}
for all $N\in\N^\star$, all $n\in\{0,\ldots,N-1\}$ and all $h\in(0,H)$.

\emph{Conclusion.}
Gathering the upper bounds on $|\epsilon_{n,1}|$ and $|\epsilon_{n,2}|$ obtained above then yields the claim~\eqref{eq:epsilon-claim}, under the condition $H\le H_2$.

The proof of the weak error estimate~\eqref{eq:proposchemeODE} is then straightforward: for all $N\in\N^\star$ one has
\begin{align*}
\big|\E[\varphi(X_N)]-\varphi(\X^0(Nh))\big|&\le \sum_{n=0}^{N-1}|\epsilon_n|\\
&\le C_1(\varphi,x_0) h\sum_{n=0}^{N-1} \int_{t_n}^{t_{n+1}}e^{-\mu(t_N-t)}\,dt\\
&\le C_1(\varphi,x_0)h\int_{0}^{t_{N}}e^{-\mu(t_N-t)}\,dt\le \frac{C_1(\varphi,x_0)h}{\mu}.
\end{align*}
This concludes the proof of Proposition~\ref{propo:schemeODE}.
\end{proof}

\subsection{Proof of Theorem~\ref{theo:1}}\label{sec:conclusion-proof1}

We are now in a position to provide the proof of Theorem~\ref{theo:1}.

\begin{proof}[Proof of Theorem~\ref{theo:1}]
First, Let us establish the weak error estimates~\eqref{eq:theo1-weak}. Recall that owing to~\eqref{eq:decomperror1} the error term appearing in the left-hand side of~\eqref{eq:theo1-weak} can be decomposed as
\[
\E[\varphi(X_N)]-\varphi(x^\star)=\varphi(\X^0(Nh))-\varphi(x^\star)+\E[\varphi(X_N)]-\varphi(\X^0(Nh)).
\]
On the one hand, using the inequality~\eqref{eq:ineqDvarphi} and applying the upper bound~\eqref{eq:cvX0}, one has
\begin{align*}
\big|\varphi(\X^0(Nh))-\varphi(x^\star)\big|&\le C\bigl(\|\nabla\varphi(x^\star)\|+\vvvert\varphi\vvvert_2\|\X^0(Nh)-x^\star\|\bigr) \|\X^0(Nh)-x^\star\|\\
&\le C_1(\varphi,x_0)e^{-\mu Nh}.
\end{align*}
On the other hand, applying the uniform in time weak error estimates~\eqref{eq:proposchemeODE} from Proposition~\ref{propo:schemeODE}, one has for all $N\in\N$ and all $h\in(0,H)$
\[
\big|\E[\varphi(X_N)]-\varphi(\X^0(Nh))\big|\le C_1(\varphi,x_0)h,
\]
where $C_1(\varphi,x_0)$ is given by~\eqref{eq:constantC1}. Gathering the two upper bounds yields the weak error estimates~\eqref{eq:theo1-weak}.

Second, let us establish the error estimates on the expected residual~\eqref{eq:theo1-residual}. Using the decomposition~\eqref{eq:decomperror1} with $\varphi=F$, one has
\[
\E[F(X_N)]-F(x^\star)=F(\X^0(Nh))-F(x^\star)+\E[F(X_N)]-F(\X^0(Nh)).
\]
On the one hand, using the inequality~\eqref{eq:residual_up} and applying the upper bound~\eqref{eq:cvX0}, one has
\[
F(\X^0(Nh))-F(x^\star)\le C\|\X^0(Nh)-x^\star\|^2\le Ce^{-2\mu Nh}\|x_0-x^\star\|^2
\]
On the other hand, applying the uniform in time weak error estimates~\eqref{eq:proposchemeODE} from Proposition~\ref{propo:schemeODE} with $\varphi=F$, one has for all $N\in\N$ and all $h\in(0,H)$
\[
\big|\E[F(X_N)]-F(\X^0(Nh))\big|\le C\bigl(1+\|x_0-x^\star\|^3\bigr)h.
\]
Gathering the two upper bounds yields the error estimates on the expected residual~\eqref{eq:theo1-residual}.

Finally, let us establish the strong error estimates~\eqref{eq:theo1-strong}. Owing to the inequality~\eqref{eq:residual_low}, one obtains
\[
\E[\|X_N-x^\star\|^2]\le\frac{2}{\mu}\bigl(\E[F(X_N)]-F(x^\star)\bigr)
\]
and it then suffices to apply the error estimates on the expected residual~\eqref{eq:theo1-residual} to get~\eqref{eq:theo1-strong}.

The proof of Theorem~\ref{theo:1} is thus completed.
\end{proof}

\section{Proof of Theorem~\ref{theo:2}}\label{sec:proof2}

This section is devoted to the detailed proof of Theorem~\ref{theo:2}. In the sequel, it is assumed that the objective function $F$ satisfies the strengthened Assumption~\ref{ass:F2}, and that the diffusion coefficient $\sigma$ satisfies the strengthened Assumption~\ref{ass:sigma2}. Recall that this implies that the basic Assumptions~\ref{ass:F1} and~\ref{ass:sigma1} are also satisfied.

Let us explain the strategy of the proof, and compare it with the proof of Theorem~\ref{theo:1} given in Section~\ref{sec:proof1}. Let $\varphi:\R^d\to\R$ be a mapping of class $\mathcal{C}^3$. The error is decomposed as
\begin{equation}\label{eq:decomperror2}
\E[\varphi(X_N)]-\varphi(x^\star)=\E[\varphi(\Y^h(Nh))]-\varphi(x^\star)+\E[\varphi(X_N)]-\E[\varphi(\Y^h(Nh))],
\end{equation}
where $\bigl(\Y^h(t)\bigr)_{t\ge 0}$ is the solution of the modified stochastic differential equation~\eqref{eq:SDE2}. On the one hand, the treatment of the first error term $\E[\varphi(\Y^h(Nh))]-\varphi(x^\star)$ follows from an upper bound on $\E[\|\Y^h(t)-x^\star\|^2]$ which is stated in Proposition~\ref{propo:YT} below. On the other hand, the approach used to treat the second error term $\E[\varphi(X_N)]-\E[\varphi(\Y^h(Nh))]$ follows arguments similar to those used in Section~\ref{sec:proof1}, where an auxiliary function $u^0$ has been introduced. The situation is more complex here: since the diffusion coefficient $\sigma$ depends on time in general, considering the stochastic differential equation~\eqref{eq:SDE2} on the time interval $[0,\infty)$, with initial value imposed at time $t=0$, is not sufficient. Instead one needs to consider the stochastic differential equation
\begin{equation}\label{eq:SDE2v2}
\left\lbrace
\begin{aligned}
&d\Y_y^h(s|t)=-\nabla F^h(\Y_y^h(s|t))ds+\sqrt{h}\sigma(s,\Y_y^h(s|t))dB(s),\quad \forall~s\ge t,\\
&\Y_y^h(t|t)=y,
\end{aligned}
\right.
\end{equation}
on time intervals $[t,\infty)$, with initial value imposed at time $s=t$, for arbitrary $t\in[0,\infty)$. The time variable in~\eqref{eq:SDE2v2} and below is denoted by $s$. The stochastic differential equation~\eqref{eq:SDE2} is retrieved by choosing $t=0$. Recall that the modified objective function $F^h$ is assumed to be $\mu$-convex for all $h\in(0,h_{0})$ (see Assumption~\ref{ass:F2}), and that the diffusion coefficient $\sigma(s,\cdot)$ is globally Lipschitz continuous, uniformly with respect to $s\in[0,\infty)$ (see Assumption~\ref{ass:sigma1}). As a result, the stochastic differential equation~\eqref{eq:SDE2v2} is globally well-posed: for any time $t\in[0,\infty)$ and any initial condition $y\in\R^d$, there exists a unique solution of~\eqref{eq:SDE2v2} denoted by $\bigl(\Y_y^h(s|t)\bigr)_{s\ge t}$. Note that choosing $t=0$ and $y=x_0$ in~\eqref{eq:SDE2v2}, one obtains $\Y^h(T)=\Y_{x_0}^h(T|0)$ for all $T\in[0,\infty)$. Moment bounds for the solutions $\bigl(\Y_y^h(s|t)\bigr)_{s\ge t}$ of~\eqref{eq:SDE2v2} are given in Lemma~\ref{lem:momentboundsYh} below.

Introducing the auxiliary stochastic differential equations~\eqref{eq:SDE2v2} indexed by the initial time $t\in[0,\infty)$ allows us to define, for any time $T\in[0,\infty)$, an auxiliary mapping $v_T^h:[0,T]\times\R^d\to\R$, as follows: for all $t\in[0,T]$ and all $y\in \R^d$, set
\begin{equation}\label{eq:vh}
v_T^h(t,y)=\E[\varphi(\Y_y^h(T|t))].
\end{equation}
The mapping $v_T^h$ is the solution of the backward Kolmorogov equation
\begin{equation}\label{eq:Kolmogorov_vh}
\left\lbrace
\begin{aligned}
&\partial_tv_T^h(t,y)-\langle \nabla v_T^h(t,y),\nabla F^h(y)\rangle+\frac{h}{2}\nabla^2 v_T^h(t,y):a(t,y)=0,\quad \forall~t\in [0,T], y\in\R^d,\\
&v_T^h(T,y)=\varphi(y),\quad \forall~y\in\R^d.
\end{aligned}
\right.
\end{equation}

The following remark justifies why it is necessary to introduce~\eqref{eq:SDE2v2} instead of considering only~\eqref{eq:SDE2}.
\begin{rem}
Let us emphasize that the mapping $u^h:[0,\infty)\times\R^d\to\R$ defined by
\[
u^h(t,y)=\E[\varphi(\Y_y^h(t))]
\]
for all $t\in[0,\infty)$ and $y\in\R^d$, where $\bigl(\Y_y^h(t)\bigr)_{t\ge 0}$ denotes the solution of~\eqref{eq:SDE2} with arbitrary initial value $\Y_y^h(0)=y\in\R^d$, in general does not solve the partial differential equation
\[
\left\lbrace
\begin{aligned}
&\partial_tu^h(t,y)=-\langle \nabla u^h(t,y) , \nabla F^h(y)\rangle+\frac{h}{2}\nabla^2 u^h(t,y):a(t,y),\quad \forall~t\ge 0, y\in\R^d,\\
&u^h(0,y)=\varphi(y),\quad \forall~y\in\R^d.
\end{aligned}
\right.
\]
This would be the case only if the diffusion coefficient $\sigma$ and the auxiliary mapping $a$ would be independent of time $t$.
\end{rem}

The auxiliary functions $v_T^h$ play a role in the analysis of the second error term $\E[\varphi(X_N)]-\E[\varphi(\Y^h(Nh))]$ which is similar to the role played by the auxiliary mapping $u^0$ in Section~\ref{sec:proof1}. Assume that $T=Nh$, then owing to the definition~\eqref{eq:vh} of $v_T^h$ the second error term is written as
\begin{equation}\label{eq:decomperror21}
\E[\varphi(X_N)]-\E[\varphi(\Y^h(Nh))]=\E[v_T^h(T,X_N)]-\E[v_T^h(0,X_0)].
\end{equation}
Then a standard telescoping sum argument provides the decomposition of the error as
\[
\E[v_T^h(T,X_N)]-\E[v_T^h(0,X_0)]=\sum_{n=0}^{N-1}\bigl(\E[v_T^h(t_{n+1},X_{n+1})]-\E[v_T^h(t_n,X_{n})]\bigr).
\]
As in Section~\ref{sec:proof1}, recalling that $X_n=\tilde{X}^h(t_n)$ for all $n\in\{0,\ldots,N\}$, where the auxiliary continuous time process $\bigl(\tilde{X}^h(t)\bigr)_{t\ge 0}$ is defined by~\eqref{eq:tildeX}, one obtains
\begin{equation}\label{eq:decomperror21delta}
\E[v_T^h(T,X_N)]-\E[v_T^h(0,X_0)]=\sum_{n=0}^{N-1}\delta_n
\end{equation}
where for all $n\in\{0,\ldots,N-1\}$ the error term $\delta_n$ is defined by
\begin{equation}\label{eq:delta}
\delta_n=\E[v_T^h(t_{n+1},\tilde{X}^h(t_{n+1}))]-\E[v_T^h(t_n,\tilde{X}^h(t_n))].
\end{equation}
Then one proceeds by applying the It\^o formula, using~\eqref{eq:tildeX-SDE}. Compared with Section~\ref{sec:proof1}, one needs to bound the error terms $\delta_n$ as ${\rm O}(h^3)$ instead of ${\rm O}(h^2)$ in order to obtain second order convergence with respect to $h$. This requires different arguments, however similar techniques are used to obtain error bounds which are uniform with respect to time.

The reminder of this section is organized as follows. Moment bounds for the solutions $\bigl(\Y_y^h(s|t)\bigr)_{s\ge t}$ are first studied in Section~\ref{sec:momentboundsYh}. The large time behavior of the solution $\Y^h(T)=\Y_{x_0}^{h}(T|0)$ when $T\to\infty$ is then studied in Section~\ref{sec:largetimeY}, this is the key ingredient to treat the first error term appearing in the right-hand side of~\eqref{eq:decomperror2}. Regularity properties of the functions $v_T^h$ are stated in Section~\ref{sec:vh}, with proofs postponed to Appendix~\ref{app:lemv}. The application of those results to obtain upper bounds on the second error term appearing on the right-hand side of~\eqref{eq:decomperror2}, following the strategy outlined above, is then presented in Section~\ref{sec:error2}. Finally, Section~\ref{sec:conclusion-proof2} combines the upper bounds on the error terms and provides the proof of Theorem~\ref{theo:2}.

\subsection{Moment bounds for the solutions of~\eqref{eq:SDE2v2}}\label{sec:momentboundsYh}

Let us first provide moment bounds for the solutions $\bigl(\Y_y^h(s|t)\bigr)_{s\ge t}$ of the stochastic differential equations~\eqref{eq:SDE2v2} indexed by time $t\in[0,\infty)$ and the time-step size $h\in(0,h_0)$. It is worth mentioning that the moment bounds are uniform with respect to $s\ge t\ge 0$ and with respect to $h\in(0,h_0)$.

\begin{lemma}\label{lem:momentboundsYh}
Assume that the objective $F$ satisfies the strengthened Assumption~\ref{ass:F2} and that the diffusion coefficient $\sigma$ satisfies the basic Assumption~\ref{ass:sigma1}. There exists a non-increasing sequence $\bigl(h_p^{(0)}\bigr)_{p\in\N^\star}$, with $h_1^{(0)}\in(0,h_0)$, such that for any $p\in\N^\star$, one has 
\begin{equation}\label{eq:momentboundsYh}
\underset{h\in(0,h_p^{(0)})}\sup~\underset{s\ge t}\sup~\underset{y\in\R^d}\sup~\frac{\E[\|\Y_y^h(s|t)-x^\star\|^{2p}]}{1+\|y-x^\star\|^{2p}}<\infty.
\end{equation}
\end{lemma}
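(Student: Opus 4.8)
The plan is to mimic the proof of Lemma~\ref{lem:momentbounds-scheme} in continuous time, applying the It\^o formula to the functional $\psi_p(y)=\|y-x^\star\|^{2p}$ along the solution of~\eqref{eq:SDE2v2}, and then deriving a differential inequality that is dissipative thanks to the uniform $\mu$-convexity of $F^h$ guaranteed by Assumption~\ref{ass:F2}. First I would fix $p\in\N^\star$, $t\ge 0$, $y\in\R^d$, and $h\in(0,h_0)$, and write $Z_p(s)=\E[\|\Y_y^h(s|t)-x^\star\|^{2p}]$ for $s\ge t$. Applying It\^o's formula to $\psi_p(\Y_y^h(s|t))$, using the expressions~\eqref{gradient:norme^p} and~\eqref{borne:hessienne:norme^p} for the first and second derivatives of $\psi_p$, and the optimality condition $\nabla F^h(x^\star)=0$ (which holds since $x^\star$ is the unique minimizer of $F^h$), one obtains after taking expectations
\[
\frac{d}{ds}Z_p(s)=\E\bigl[-2p\|\Y-x^\star\|^{2(p-1)}\langle \nabla F^h(\Y)-\nabla F^h(x^\star),\Y-x^\star\rangle\bigr]+\frac{h}{2}\E\bigl[\nabla^2\psi_p(\Y):a(s,\Y)\bigr],
\]
where $\Y=\Y_y^h(s|t)$ is written for brevity. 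The drift term is controlled from above using the $\mu$-convexity~\eqref{eq:assF2-muconvex} of $F^h$, which yields $\langle \nabla F^h(\Y)-\nabla F^h(x^\star),\Y-x^\star\rangle\ge\mu\|\Y-x^\star\|^2$, hence a contribution bounded above by $-2p\mu\,\E[\|\Y-x^\star\|^{2p}]=-2p\mu\,Z_p(s)$.

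For the It\^o correction term, I would bound $|\nabla^2\psi_p(y):a(s,y)|\le 2p(2p-1)\|y-x^\star\|^{2(p-1)}\|a(s,y)\|$ via~\eqref{borne:hessienne:norme^p} and then use the at most linear growth~\eqref{eq:asssigma1growth} of $\sigma$ together with the uniform bound $\varsigma(t)\le\|\varsigma\|_\infty$ from Assumption~\ref{ass:sigma1}; since $\|a(s,y)\|\le\|\sigma(s,y)\|^2\le\|\varsigma\|_\infty^2(1+\|y-x^\star\|)^2$, this term is at most $Ch\,\E[\|\Y-x^\star\|^{2(p-1)}(1+\|\Y-x^\star\|)^2]$, which by Young's inequality splits into a term $\le 2ph\varepsilon\mu Z_p(s)$ absorbed by the dissipative part plus a constant $C_p h$ (the lower-order powers $\|\Y-x^\star\|^{2(p-1)}$ and $\|\Y-x^\star\|^{2p-1}$ being handled by Young's inequality against $Z_p(s)$ and an additive constant). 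This produces a differential inequality of the form
\[
\frac{d}{ds}Z_p(s)\le -\bigl(2p\mu-C_p h\bigr)Z_p(s)+C_p h,\quad s\ge t.
\]
Choosing $h_p^{(0)}\in(0,h_0)$ small enough that $2p\mu-C_p h\ge p\mu>0$ for all $h\in(0,h_p^{(0)})$, and taking the sequence non-increasing in $p$, Gr\"onwall's lemma gives $Z_p(s)\le e^{-p\mu(s-t)}Z_p(t)+\frac{C_p h}{p\mu}\le\|y-x^\star\|^{2p}+\frac{C_p h_p^{(0)}}{p\mu}$ for all $s\ge t$, using $Z_p(t)=\|y-x^\star\|^{2p}$. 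Dividing by $1+\|y-x^\star\|^{2p}$ and taking the supremum over $s\ge t\ge 0$, $y\in\R^d$, and $h\in(0,h_p^{(0)})$ yields~\eqref{eq:momentboundsYh}.

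The main obstacle I anticipate is twofold. First, a rigorous application of It\^o's formula requires justifying that the expectations of the local-martingale (stochastic integral) terms vanish and that $Z_p(s)$ is finite and differentiable; this is standard under the global well-posedness established after~\eqref{eq:SDE2v2} and the one-sided Lipschitz bound following from $\mu$-convexity, but one should invoke a localization argument with stopping times and pass to the limit using Fatou's lemma or the a priori moment finiteness. Second, and more substantively, one must verify that the threshold $h_p^{(0)}$ can be chosen uniformly in the initial time $t$ and initial datum $y$; this is immediate here precisely because the constant $\mu$ in~\eqref{eq:assF2-muconvex} is independent of $h$ and the growth bound $\varsigma(t)\le\|\varsigma\|_\infty$ is uniform in $t$, so no dependence on $t$ or $y$ enters $C_p$ or $h_p^{(0)}$. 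The crucial structural input making the bound uniform in $s,t,h$ is the uniform-in-$h$ strong convexity of $F^h$ from Assumption~\ref{ass:F2}, exactly as emphasized in the surrounding discussion.
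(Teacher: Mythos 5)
Your proposal is correct and follows essentially the same route as the paper's proof: It\^o's formula applied to $\psi_p(y)=\|y-x^\star\|^{2p}$ using the bounds~\eqref{gradient:norme^p} and~\eqref{borne:hessienne:norme^p}, the uniform $\mu$-convexity~\eqref{eq:assF2-muconvex} together with $\nabla F^h(x^\star)=0$ for the dissipative drift term, the linear growth~\eqref{eq:asssigma1growth} and boundedness of $\varsigma$ for the It\^o correction, Young's inequality to absorb the $h$-dependent perturbation, a choice of $h_p^{(0)}$ making the net decay rate positive, and Gr\"onwall's lemma. Your additional remarks on localization and on the uniformity of $h_p^{(0)}$ in $t$ and $y$ are sound and only make explicit what the paper leaves implicit.
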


\begin{proof}
Assume that $h\in(0,h_0)$ and let $p\in\N^\star$. Let the initial time $t\in[0,\infty)$ and the initial value $y\in\R^d$ be given. Recall that the diffusion coefficient $\sigma(s,\cdot)$ has at most linear growth, uniformly with respect to $s\in[0,\infty)$, see the condition~\eqref{eq:asssigma1growth} from Assumption~\ref{ass:sigma1}. Using the auxiliary results~\eqref{gradient:norme^p} and~\eqref{borne:hessienne:norme^p} (stated in the proof of Lemma~\ref{lem:momentbounds-scheme}) and applying It\^o's formula, there exists a positive real number $C_p^{(0)}\in(0,\infty)$ such that for all $s\ge t$ one has
\begin{align*}
\frac{1}{2p}\frac{d\E[\|\Y_{y}^{h}(s|t)-x^\star\|^{2p}]}{ds}&\leq -\E[\langle \nabla F^h(\Y_{y}^{h}(s|t)),\Y_{y}^{h}(s|t)-x^\star\rangle \|\Y_{y}^{h}(s|t)-x^\star\|^{2(p-1)}]\\
&+C_p^{(0)}h\E[(1+\|\Y_{y}^{h}(s|t)-x^\star\|^2)\|\Y_{y}^{h}(s|t)-x^\star\|^{2(p-1)}].
\end{align*}
The modified objective function $F^h$ is $\mu$-convex for all $h\in(0,h_0)$, see the condition~\eqref{eq:assF2-muconvex} from Assumption~\ref{ass:F2}. Moreover, one has $\nabla F^h(x^\star)=0$ for all $h\in(0,h_0)$. As a consequence, one obtains, for all $s\ge t$,
\[
\langle \nabla F^h(\Y_{y}^{h}(s|t)),\Y_{y}^{h}(s|t)-x^\star\rangle \le -\mu \|\Y_{y}^{h}(s|t)-x^\star\|^2.
\]
Applying Young's inequality, there exists a positive real number $C_p\in(0,\infty)$ such that one has, for all $s\ge t$,
\[
\frac{1}{2p}\frac{d\E[\|\Y_{y}^{h}(s|t)-x^\star\|^{2p}]}{ds}\le -(\mu-C_p^{(0)}h)\E[\|\Y_{y}^{h}(s|t)-x^\star\|^{2p}]+C_ph.
\]
Set $h_p^{(0)}=\frac12\min\left(\frac{\mu}{C_p^{(0)}},h_0\right)$. For all $h\in(0,h_p^{(0)})$, one has $\mu-C_p^{(0)}h\ge \mu-C_p^{(0)}h_p^{(0)}>0$. 
Applying Gr\"onwall's inequality and recalling that $\Y_y^h(t|t)=y$, one obtains the following upper bound: for all $s\ge t$ and all $h\in(0,h_p^{(0)})$ one has
\[
\E[\|\Y_{y}^{h}(s|t)-x^\star\|^{2p}]\le e^{-(\mu-C_p^{(0)}h_p^{(0)})(s-t)}\|y-x^\star\|^{2p}+\frac{C_p h}{\mu-C_p^{(0)}h_p^{(0)}}.
\]
Note that $h\in(0,h_p^{(0)})$ and that $e^{-(\mu-C_p^{(0)}h_p^{(0)})(s-t)}\le 1$. Then one obtains the moment bounds~\eqref{eq:momentboundsYh}, and this concludes the proof of Lemma~\ref{lem:momentboundsYh}.
\end{proof}

\subsection{Large time behavior of the solution of~\eqref{eq:SDE2}}\label{sec:largetimeY}

\begin{propo}\label{propo:YT}
Assume that the objective function $F$ satisfies the strengthened Assumption~\ref{ass:F2} and that the diffusion coefficient $\sigma$ satisfies the basic Assumption~\ref{ass:sigma1}.

There exists a positive real number $C\in(0,\infty)$ such that for any initial value $x_0\in\R^d$, any $h\in(0,h_1^{(0)})$ and any $T\in(0,\infty)$, one has
\begin{equation}\label{eq:YT}
\E[\|\Y^h(T)-x^\star\|^{2}]\le C e^{-2\mu T}\left(\|x_0-x^\star\|^2+h\rho(T) \bigl(1+\|x_0-x^\star\|^2\bigr) \right),
\end{equation}
where we recall that $\bigl(\Y^h(t)\bigr)_{t\ge 0}$ denotes the solution of the stochastic differential equation~\eqref{eq:SDE2}, with initial value $\Y^h(0)=x_0$, and that $\rho(T)$ is defined by~\eqref{eq:rhoT}.
\end{propo}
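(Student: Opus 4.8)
The plan is to derive a scalar differential inequality for the second moment $g(t)=\E[\|\Y^h(t)-x^\star\|^2]$ and then integrate it by a Gr\"onwall argument with integrating factor $e^{2\mu t}$. First I would apply It\^o's formula to $t\mapsto\|\Y^h(t)-x^\star\|^2$ along the solution of~\eqref{eq:SDE2}. Since the diffusion coefficient is $\sqrt{h}\sigma$ and the Hessian of $x\mapsto\|x-x^\star\|^2$ is $2I_d$, this yields
\[
d\|\Y^h(t)-x^\star\|^2=\Bigl(-2\langle \Y^h(t)-x^\star,\nabla F^h(\Y^h(t))\rangle+h\,{\rm Tr}\bigl(a(t,\Y^h(t))\bigr)\Bigr)dt+2\sqrt{h}\langle \Y^h(t)-x^\star,\sigma(t,\Y^h(t))dB(t)\rangle,
\]
with $a=\sigma\sigma^\star$ as in~\eqref{eq:a}. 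The moment bounds of Lemma~\ref{lem:momentboundsYh}, applicable since $h\in(0,h_1^{(0)})$, together with the linear growth of $\sigma$ guarantee that the stochastic integral is a genuine martingale, so taking expectations removes the last term and produces the identity $g'(t)=-2\E[\langle \Y^h(t)-x^\star,\nabla F^h(\Y^h(t))\rangle]+h\,\E[{\rm Tr}(a(t,\Y^h(t)))]$.

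Next I would estimate the two terms on the right-hand side. For the drift term, using $\nabla F^h(x^\star)=0$ and the $\mu$-convexity of $F^h$ from Assumption~\ref{ass:F2} (inequality~\eqref{eq:assF2-muconvex}) gives $\langle \Y^h(t)-x^\star,\nabla F^h(\Y^h(t))\rangle\ge \mu\|\Y^h(t)-x^\star\|^2$, hence this contribution is bounded by $-2\mu g(t)$. For the diffusion term, recall that ${\rm Tr}(a(t,y))=\|\sigma(t,y)\|^2$ with the Frobenius norm, so the linear growth condition~\eqref{eq:asssigma1growth} yields ${\rm Tr}(a(t,y))\le \varsigma(t)^2(1+\|y-x^\star\|)^2\le 2\varsigma(t)^2(1+\|y-x^\star\|^2)$, and therefore $\E[{\rm Tr}(a(t,\Y^h(t)))]\le 2\varsigma(t)^2(1+g(t))$.

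The key point that prevents a feedback loop is to control the residual $g(t)$ appearing inside the diffusion term by the uniform-in-time moment bound of Lemma~\ref{lem:momentboundsYh} with $p=1$, namely $g(t)\le C(1+\|x_0-x^\star\|^2)$ for all $t\ge 0$ and $h\in(0,h_1^{(0)})$. This converts the diffusion contribution into $Ch\varsigma(t)^2(1+\|x_0-x^\star\|^2)$ and closes the estimate as the differential inequality
\[
g'(t)\le -2\mu g(t)+Ch\varsigma(t)^2\bigl(1+\|x_0-x^\star\|^2\bigr).
\]
I would then rewrite this as $\frac{d}{dt}\bigl(e^{2\mu t}g(t)\bigr)\le Ch\varsigma(t)^2 e^{2\mu t}(1+\|x_0-x^\star\|^2)$, integrate from $0$ to $T$ with $g(0)=\|x_0-x^\star\|^2$, recognize $\int_0^T e^{2\mu s}\varsigma(s)^2\,ds=\rho(T)$ from~\eqref{eq:rhoT}, and multiply by $e^{-2\mu T}$ to obtain exactly~\eqref{eq:YT}.

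The argument is essentially routine; the only step requiring care is the treatment of the quadratic diffusion term. Invoking the already-established uniform moment bound of Lemma~\ref{lem:momentboundsYh}, rather than attempting to absorb $g(t)$ into the left-hand side (which would only work for small $h$ and would degrade the clean decay rate $2\mu$ into a time- and $h$-dependent one), is precisely what makes both the exponential factor $e^{-2\mu T}$ and the sharp dependence on $\rho(T)$ emerge correctly. A secondary technicality is the justification that the It\^o integral has zero expectation, which follows from the second-moment bounds on $\Y^h$ and the linear growth of $\sigma$.
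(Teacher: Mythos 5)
Your proposal is correct and follows essentially the same route as the paper's proof: It\^o's formula applied to $\|\Y^h(t)-x^\star\|^2$, the $\mu$-convexity of $F^h$ (Assumption~\ref{ass:F2}) for the drift term, the linear growth bound~\eqref{eq:asssigma1growth} combined with the uniform moment bound of Lemma~\ref{lem:momentboundsYh} for the diffusion term, and Gr\"onwall's inequality with integrating factor $e^{2\mu t}$ to produce $\rho(T)$. Your explicit justification of the martingale property of the stochastic integral and the pointwise bound $(1+\|y-x^\star\|)^2\le 2(1+\|y-x^\star\|^2)$ before taking expectations are minor presentational refinements of the same argument.
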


\begin{proof}
Applying It\^o's formula to the stochastic differential equation~\eqref{eq:SDE2}, one obtains for all $t\ge 0$
\begin{align*}
\frac12\frac{d\E[\|\Y^h(t)-x^\star\|^2]}{dt}&=-\E[\langle \nabla F^h(\Y^h(t)),\Y^h(t)-x^\star\rangle]+\frac{h}{2}\E[\|\sigma(t,\Y^h(t))\|^2]\\
&\le -\mu\E[\|\Y^h(t)-x^\star\|^2]+\frac{h}{2}\varsigma(t)^2\bigl(1+\E[\|\Y^h(t)-x^\star\|^2]\bigr),
\end{align*}
using the $\mu$-convexity property~\eqref{eq:assF2-muconvex} of the modified objective function $F^h$, and the at most linear growth property~\eqref{eq:asssigma1growth} of the diffusion coefficient $\sigma(t,\cdot)$, for all $t\ge 0$.

Applying the moment bounds stated in Lemma~\ref{lem:momentboundsYh}, there exists a positive real number $C\in(0,\infty)$ such that, for all $h\in(0,h_1^{(0)})$ and all $t\ge 0$ one obtains the inequality
\[
\frac12\frac{d\E[\|\Y^h(t)-x^\star\|^2]}{dt}\le -\mu\E[\|\Y^h(t)-x^\star\|^2]+Ch\varsigma(t)^2\bigl(1+\|x_0-x^\star\|^2\bigr).
\]
Applying Gr\"onwall's inequality, as a result one has for all $h\in(0,h_1^{(0)})$ and all $t\ge 0$
\[
\E[\|\Y^h(t)-x^\star\|^2]\le e^{-2\mu t}\|x_0-x^\star\|^2+Ch\int_0^t e^{-2\mu(t-r)}\varsigma(r)^2\,dr \bigl(1+\|x_0-x^\star\|^2\bigr).
\]
This yields the inequality~\eqref{eq:YT} and concludes the proof of Proposition~\eqref{propo:YT}.
\end{proof}

\subsection{Regularity properties of the auxiliary functions $v_T^h$}\label{sec:vh}

\begin{lemma}\label{lem:v} 
Assume that the objective function $F$ satisfies the strengthened Assumption~\ref{ass:F2}, and that the diffusion coefficient $\sigma$ satisfies the strengthened Assumption~\ref{ass:sigma2}.

Let $\varphi:\R^d\to\R$ be a mapping of class $\mathcal{C}^3$, with bounded second and third order derivatives. Given any $T\in(0,\infty)$, let the mapping $v_T^h:[0,T]\times\R^d\to\R$ be defined by~\eqref{eq:vh}.

For all $t\in[0,T]$ the mapping $v_T^h(t,\cdot)$ is of class $\mathcal{C}^3$. Moreover, the first, second and third order derivatives of $v_T^h(t,\cdot)$ satisfy the following upper bounds.

\begin{itemize}
    \item There exist positive real numbers $h_1\in(0,h_0)$ and $C_1\in(0,\infty)$, satisfying the condition $C_1h_1<\mu$, such that the first order derivative satisfies: for all $h\in(0,h_1)$ and all $T\ge t\ge 0$, one has
\begin{equation}\label{eq:lemv1}
\underset{y\in\R^d}{\sup}~\underset{k\in\R^d\setminus\{0\}}{\sup}~\frac{|Dv_T^h(t,y).k|}{(1+\|y-x^\star\|)\|k\|}\le C_1(\|\nabla\varphi(x^\star)\|+\vvvert \varphi \vvvert_2)\ e^{-(\mu-C_1h)(T-t)}.
%&\underset{x\in\R^d}\sup~\frac{\|\nabla^2 v^h_T(t,x)\|}{1+\|x-x^\star\|}\le C_\epsilon\vvvert\varphi\vvvert_2 e^{-\left(\mu-\frac{1}{2}h\|\varsigma\|^2_{\infty}-\epsilon \right) (T-t)}.\label{eq:lemv2}\\
%&\underset{x\in\R^d}\sup~\frac{\|\nabla^3 v^h_T(t,x)\|}{1+\|x-x^\star\|}\le C_\epsilon(\vvvert\varphi\vvvert_2 +\vvvert\varphi\vvvert_3) e^{-\left(\mu-\frac{1}{2}h\|\varsigma\|_{\infty}^2 -\epsilon\right) (T-t)}.\label{eq:lemv3}
\end{equation}

\item For any $\lambda\in(0,\mu)$, there exist positive real numbers $h_2\in(0,h_1)$ and $C_{2,\lambda}\in(C_1,\infty)$, satisfying the condition $C_{2,\lambda}h_2<\lambda$, such that the second order derivative satisfies: for all $h\in(0,h_2)$ and all $T\ge t\ge 0$, one has
\begin{equation}\label{eq:lemv2}
\underset{y\in\R^d}{\sup}~\underset{k_1,k_2\in\R^d\setminus\{0\}}{\sup}~\frac{|D^2v_T^h(t,y).(k_1,k_2)|}{(1+\|y-x^\star\|^2)\|k_1\|\|k_2\|}\le C_{2,\lambda}(\|\nabla\varphi(x^\star)\|+\vvvert \varphi \vvvert_2)e^{-(\lambda-C_{2,\lambda}h)(T-t)}.
\end{equation}

\item For any $\lambda\in(0,\mu)$, there exist positive real numbers $h_3\in(0,h_2)$ and $C_{3,\lambda}\in(C_{2,\lambda},\infty)$, satisfying the condition $C_{3,\lambda}h_3<\lambda$, such that the second order derivative satisfies: for all $h\in(0,h_3)$ and all $T\ge t\ge 0$, one has
\begin{equation}\label{eq:lemv3}
\underset{y\in\R^d}{\sup}~\underset{k_1,k_2,k_3\in\R^d\setminus\{0\}}{\sup}~\frac{|D^3v_T^h(t,y).(k_1,k_2,k_3)|}{(1+\|y-x^\star\|^2)\|k_1\|\|k_2\|\|k_3\|}\le C_{3,\lambda}(\|\nabla\varphi(x^\star)\|+\vvvert \varphi \vvvert_2+\vvvert \varphi \vvvert_3)e^{-(\lambda-C_{3,\lambda}h)(T-t)}.
\end{equation}
\end{itemize}
\end{lemma}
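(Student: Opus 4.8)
The plan is to combine the probabilistic representation~\eqref{eq:vh} with the differentiability of the stochastic flow $y\mapsto\Y_y^h(s|t)$ associated with~\eqref{eq:SDE2v2}. Since $F\in\mathcal{C}^5$ (hence $\nabla F^h\in\mathcal{C}^4$) and $\sigma(t,\cdot)\in\mathcal{C}^3$, and since uniform moment bounds are available from Lemma~\ref{lem:momentboundsYh}, standard results on stochastic flows ensure that $v_T^h(t,\cdot)$ is of class $\mathcal{C}^3$ and that its spatial derivatives are obtained by differentiating under the expectation, e.g.
\[
Dv_T^h(t,y).k=\E[D\varphi(\Y_y^h(T|t)).\eta^{(1)}(T)],
\]
where $\eta^{(1)}(s)=D_y\Y_y^h(s|t).k$ is the first variation process, with analogous Fa\`a di Bruno expansions for $D^2v_T^h$ and $D^3v_T^h$ involving the higher variation processes $\eta^{(2)}=D_y^2\Y_y^h.(k_1,k_2)$ and $\eta^{(3)}=D_y^3\Y_y^h.(k_1,k_2,k_3)$. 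The heart of the proof is to establish exponential decay in $(T-t)$, uniform in $h$, of the moments of these variation processes, the driving mechanism being the uniform $\mu$-convexity of $F^h$ granted by Assumption~\ref{ass:F2}.

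For the first-order bound~\eqref{eq:lemv1}, I would use that $\eta^{(1)}$ solves the linear stochastic differential equation
\[
d\eta^{(1)}(s)=-\nabla^2F^h(\Y_y^h(s|t))\eta^{(1)}(s)\,ds+\sqrt{h}\,D_x\sigma(s,\Y_y^h(s|t)).\eta^{(1)}(s)\,dB(s),\quad \eta^{(1)}(t)=k.
\]
Applying It\^o's formula to $\|\eta^{(1)}\|^{2p}$, bounding the drift contribution from above by means of $\langle\nabla^2F^h(x)v,v\rangle\ge\mu\|v\|^2$ and the It\^o correction by $C_ph\|\eta^{(1)}\|^{2p}$ (the boundedness of $D_x\sigma$ following from~\eqref{eq:asssigma1Lip}), one obtains $\E[\|\eta^{(1)}(s)\|^{2p}]\le e^{-(2p\mu-C_ph)(s-t)}\|k\|^{2p}$. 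Taking $p=1$, inserting the bound~\eqref{eq:ineqDvarphi} on $D\varphi$, and applying the Cauchy--Schwarz inequality together with the moment bounds~\eqref{eq:momentboundsYh} for $\Y_y^h(T|t)$ then yields~\eqref{eq:lemv1}, with the weight $(1+\|y-x^\star\|)$ coming from~\eqref{eq:momentboundsYh} and the rate $\mu-C_1h$ from the $p=1$ decay.

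The higher-order bounds are proved by induction on the order of differentiation. The process $\eta^{(2)}$ solves the same homogeneous linear equation as $\eta^{(1)}$ but with an additional inhomogeneous forcing built from $D^3F^h(\Y_y^h(s|t)).(\eta^{(1)},\eta^{(1)})$ in the drift and $D_x^2\sigma(s,\Y_y^h(s|t)).(\eta^{(1)},\eta^{(1)})$ in the diffusion, with $\eta^{(2)}(t)=0$; similarly $\eta^{(3)}$ is forced by products of $\eta^{(1)},\eta^{(2)}$ together with $D^4F^h$ and $D^3\sigma$ (this is precisely where $F\in\mathcal{C}^5$ and $\sigma(t,\cdot)\in\mathcal{C}^3$ are used). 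Applying It\^o's formula to $\|\eta^{(2)}\|^2$ and to $\|\eta^{(3)}\|^2$, the quadratic dissipation again produces a factor $-(2\mu-Ch)$ via $\mu$-convexity, while the cross terms with the forcing are handled by Young's and H\"older's inequalities. The decisive observation is that each forcing term is a product of lower-order variations whose moments decay at a strictly faster exponential rate (a product of $j$ copies of $\eta^{(1)}$ decaying essentially like $e^{-j\mu(s-t)}$), and that the polynomially growing coefficients $D^3F^h,D^4F^h$ only contribute factors $(1+\|\Y_y^h(s|t)-x^\star\|)$ that are absorbed by the moment bounds~\eqref{eq:momentboundsYh} (their linear growth following from the estimates of~\eqref{eq:nablaFh} type). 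Since the convolution of the homogeneous decay with a faster-decaying forcing preserves exponential decay, one gets bounds at a rate arbitrarily close to $\mu$; the degradation from $\mu$ to an arbitrary $\lambda<\mu$, and the threshold conditions $C_{j,\lambda}h_j<\lambda$, arise from the Young splitting used to absorb the cross terms into the dissipation, while tracking the $y$-dependence through H\"older's inequality produces the quadratic weights $(1+\|y-x^\star\|^2)$. Feeding these variation estimates and~\eqref{eq:ineqDvarphi} into the chain-rule expansions of $D^2v_T^h$ and $D^3v_T^h$ and applying Cauchy--Schwarz with~\eqref{eq:momentboundsYh} then gives~\eqref{eq:lemv2} and~\eqref{eq:lemv3}.

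The main obstacle, I expect, is the uniform-in-$h$ control of the higher variation processes: one must show that their moments decay exponentially with a rate that degrades by at most $\mathcal{O}(h)$ at each order while simultaneously keeping the polynomial growth in $y$ at the stated quadratic level. This rests on a delicate interplay between the $\mu$-convexity of $F^h$ (uniform in $h$ by Assumption~\ref{ass:F2}, and crucially used only through its dissipative lower bound, so that the mere linear growth of $\nabla^2F^h$ causes no difficulty), the at most linear growth of $D^3F^h$ and $D^4F^h$, and the moment bounds of Lemma~\ref{lem:momentboundsYh}. A secondary but necessary technical point is the rigorous justification that the flow is of class $\mathcal{C}^3$ in the initial datum and that differentiation under the expectation is legitimate, which follows from the regularity and growth of the coefficients combined with the uniform moment bounds.
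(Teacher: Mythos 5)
Your proposal follows essentially the same route as the paper's proof in Appendix~B: probabilistic representation of the derivatives via the first, second and third variation processes of the flow of~\eqref{eq:SDE2v2}, It\^o's formula applied to powers of their norms, the uniform $\mu$-convexity of $F^h$ providing the dissipation, Young/H\"older splittings to absorb the inhomogeneous forcing (which is exactly where the rate degrades from $\mu$ to an arbitrary $\lambda<\mu$), and the moment bounds of Lemma~\ref{lem:momentboundsYh} to produce the polynomial weights in $y$. The only minor omission is that for the third-order bound one needs $2p$-moments (not merely second moments) of the second variation process, since H\"older's inequality is applied to products such as $\E[(1+\|\Y_y^h(s|t)-x^\star\|)^2\|\eta^{(1)}\|^2\|\eta^{(2)}\|^2]$; this is handled by running your It\^o argument for $\|\eta^{(2)}\|^{2p}$ exactly as you did for $\eta^{(1)}$, as the paper does.
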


The proof of Lemma~\ref{lem:v} is postponed to Appendix~\ref{app:lemv}.

A straightforward consequence of Lemma~\ref{lem:v} is Lemma~\ref{lem:vt} stated below, giving bounds on the first order derivative of $\partial_tv_T^h(t,\cdot)$. The proof follows from using the backward Kolmogorov equation~\eqref{eq:Kolmogorov_vh} and the upper bounds from Lemma~\ref{lem:v}.

\begin{lemma}\label{lem:vt}
Assume that the objective function $F$ satisfies the strengthened Assumption~\ref{ass:F2}, and that the diffusion coefficient $\sigma$ satisfies the strengthened Assumption~\ref{ass:sigma2}.

Let $\varphi:\R^d\to\R$ be a mapping of class $\mathcal{C}^3$, with bounded second and third order derivatives. Given any $T\in(0,\infty)$, let the mapping $v_T^h:[0,T]\times\R^d\to\R$ be defined by~\eqref{eq:vh}.

For all $t\in[0,T]$, the mapping $\partial_tv_T^h(t,\cdot)$ is of class $\mathcal{C}^1$. Moreover, for any $\lambda\in(0,\mu)$, there exists positive real numbers $h_4\in(0,h_0)$ and $C_{4,\lambda}\in(0,\infty)$, satisfying the condition $C_{4,\lambda}h_4<\lambda$, such that for all $h\in(0,h_4)$ and $T\ge t\ge 0$ one has
\begin{equation}\label{eq:lemvt}
   \underset{y\in\R^d}{\sup}~\underset{k\in\R^d\setminus\{0\}}{\sup}~ \frac{|D(\partial_tv_T^h)(t,y).k|}{(1+\|y-x^\star\|^4)\|k\|}\le C_{4,\lambda}(\|\nabla\varphi(x^\star)\|+\vvvert\varphi\vvvert_2+\vvvert\varphi\vvvert_3)e^{-(\lambda-C_{4,\lambda}h)(T-t)}.
\end{equation}
\end{lemma}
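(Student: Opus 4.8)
The plan is to avoid differentiating the probabilistic representation \eqref{eq:vh} directly, and instead to exploit the backward Kolmogorov equation \eqref{eq:Kolmogorov_vh}, which expresses the time derivative purely in terms of spatial derivatives of $v_T^h$:
\[
\partial_t v_T^h(t,y)=\langle \nabla v_T^h(t,y),\nabla F^h(y)\rangle-\frac{h}{2}\nabla^2 v_T^h(t,y):a(t,y).
\]
Since Lemma~\ref{lem:v} guarantees that $v_T^h(t,\cdot)$ is of class $\mathcal{C}^3$, since $\nabla F^h$ is of class $\mathcal{C}^3$ (as $F$ is of class $\mathcal{C}^5$ under Assumption~\ref{ass:F2}), and since $a(t,\cdot)$ is of class $\mathcal{C}^1$ (as $\sigma(t,\cdot)$ is of class $\mathcal{C}^3$ under Assumption~\ref{ass:sigma2}), the right-hand side above is of class $\mathcal{C}^1$ in $y$; this yields the claimed regularity of $\partial_t v_T^h(t,\cdot)$ and legitimizes differentiating the identity once more in space.

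Next I would differentiate the identity above in an arbitrary direction $k\in\R^d$. Writing $D$ for the spatial differential, the product and chain rules give the decomposition
\begin{align*}
D(\partial_t v_T^h)(t,y).k&=D^2v_T^h(t,y).(k,\nabla F^h(y))+Dv_T^h(t,y).(\nabla^2F^h(y)k)\\
&\quad-\frac{h}{2}\bigl(D^3v_T^h(t,y).k\bigr):a(t,y)-\frac{h}{2}\nabla^2v_T^h(t,y):\bigl(D_xa(t,y).k\bigr),
\end{align*}
where $D_xa(t,y).k$ denotes the spatial differential of $a(t,\cdot)$ in the direction $k$. The four terms are then bounded separately using the upper bounds \eqref{eq:lemv1}, \eqref{eq:lemv2} and \eqref{eq:lemv3} from Lemma~\ref{lem:v} together with the growth of the coefficients.

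For the coefficient growth I would record that $\|\nabla F^h(y)\|\le C\|y-x^\star\|$ and $\|\nabla^2F^h(y)\|\le C(1+\|y-x^\star\|)$, which follow from \eqref{eq:assF1-growth} and the bounds \eqref{eq:nablaFh} on $\|\nabla F\|^2$, and similarly $\|a(t,y)\|\le C(1+\|y-x^\star\|)^2$ and $\|D_xa(t,y).k\|\le C(1+\|y-x^\star\|)\|k\|$, using $a=\sigma\sigma^\star$ together with the growth \eqref{eq:asssigma1growth} and the Lipschitz bound \eqref{eq:asssigma1Lip} on $\sigma(t,\cdot)$ and the boundedness of $\varsigma$. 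Plugging these in, the four terms carry respective polynomial weights of degree $3$, $2$, $4$ and $3$ in $\|y-x^\star\|$; in particular the diffusion term $\frac{h}{2}(D^3v_T^h.k):a$ carries the worst weight, of degree $4$, because a quadratically growing third derivative is contracted against the quadratically growing matrix $a$. Bounding $(1+\|y-x^\star\|^j)\le(1+\|y-x^\star\|^4)$ for $j\le 4$ and using $h\le h_3$ to absorb the prefactors $h$ then produces the weight $(1+\|y-x^\star\|^4)$ appearing in \eqref{eq:lemvt}.

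Finally I would assemble the exponential rates. The four terms decay at rates $\lambda-C_{2,\lambda}h$, $\mu-C_1h$, $\lambda-C_{3,\lambda}h$ and $\lambda-C_{2,\lambda}h$ respectively; since $\lambda<\mu$ and $C_1\le C_{2,\lambda}\le C_{3,\lambda}$, the slowest of these is $e^{-(\lambda-C_{3,\lambda}h)(T-t)}$, which dominates the others for $h$ small. Choosing $C_{4,\lambda}\ge C_{3,\lambda}$ large enough to absorb the multiplicative constants and setting $h_4=\min(h_3,\lambda/C_{4,\lambda})$ (so that $C_{4,\lambda}h_4<\lambda$ and $\lambda-C_{4,\lambda}h\le\lambda-C_{3,\lambda}h$) then yields \eqref{eq:lemvt}. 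I expect no genuine obstacle here: the estimate is a direct consequence of Lemma~\ref{lem:v}, and the only points demanding care are the bookkeeping of the several exponential rates into a single rate $\lambda-C_{4,\lambda}h$, and the observation that differentiating the second-order term of the Kolmogorov equation raises the polynomial weight from degree $2$ to degree $4$, which is precisely why \eqref{eq:lemvt} requires a degree-$4$ weight rather than the degree-$2$ weights of Lemma~\ref{lem:v}.
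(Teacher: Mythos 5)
Your proposal is correct and follows essentially the same route as the paper: both express $\partial_t v_T^h$ via the backward Kolmogorov equation~\eqref{eq:Kolmogorov_vh}, differentiate once in space to obtain the same four-term decomposition (the paper merely writes the diffusion terms as sums over an orthonormal basis rather than via $a=\sigma\sigma^\star$), and bound each term with Lemma~\ref{lem:v} and the coefficient growth bounds, arriving at the same polynomial degrees $3,2,4,3$ and the same dominant rate coming from the third-derivative term. The only cosmetic fix needed is in your choice $h_4=\min(h_3,\lambda/C_{4,\lambda})$, which gives $C_{4,\lambda}h_4\le\lambda$ rather than the strict inequality $C_{4,\lambda}h_4<\lambda$ required for the rate $\lambda-C_{4,\lambda}h$ to be bounded below uniformly on $(0,h_4)$; taking, e.g., $h_4=\tfrac12\min(h_3,\lambda/C_{4,\lambda})$ resolves this.
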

The proof of Lemma~\ref{lem:vt} is postponed to Appendix~\ref{app:lemv}.

Observe that the rate $\mu-C_1h$ appearing in the exponential factors in the right-hand sides of~\eqref{eq:lemv1} can be bounded from below uniformly with respect to $h\in(0,h_1)$: indeed one has $\mu-C_1h\ge \mu-C_1h_1>0$. Similarly, the rates $\lambda-C_{j,\lambda}h$ for $j\in\{2,3,4\}$ , \eqref{eq:lemv2}, \eqref{eq:lemv3} and~\eqref{eq:lemvt} can be bounded from below uniformly with respect to $h\in(0,h_j)$: indeed one has $\lambda-C_{j,\lambda}h\ge \lambda-C_{j,\lambda}h_j>0$. This means that the upper bounds in Lemma~\ref{lem:v} and Lemma~\ref{lem:vt} can be interpreted as being uniform with respect to $h$.

\subsection{Proof of weak error estimates}\label{sec:error2}

We now focus on the second main task of the proof of Theorem~\ref{theo:2}, which consists in proving error estimates for $\E[\varphi(X_N)]-\E[\varphi(\Y^h(Nh))]$, uniformly with respect to $N$.

\begin{propo}\label{propo:schemeSDE}
Assume that the objective function $F$ satisfies the strengthened Assumption~\ref{ass:F2}, and that the diffusion coefficient $\sigma$ satisfies the strengthened Assumption~\ref{ass:sigma2}.

There exist positive real numbers $H\in(0,h_{0})$ and $C\in(0,\infty)$ such that for any mapping $\varphi:\R^d\to\R$ of class $\mathcal{C}^3$ with bounded second and third order derivatives, for any initial value $x_0\in\R^d$, and for any $h\in(0,H)$, one has the weak error estimate
\begin{equation}\label{eq:proposchemeSDE}
\underset{N\in\N^\star}\sup~\big|\E[\varphi(X_N)]-\E[\varphi(\Y^h(Nh))]\big|\le  C_2(\varphi,x_0)h^2,
\end{equation}
where the positive real number $C_2(\varphi,x_0)$ is given by~\eqref{eq:constantC2}.
\end{propo}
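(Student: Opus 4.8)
The plan is to follow the strategy set up after~\eqref{eq:decomperror21delta}: bound each term $\delta_n$ from~\eqref{eq:delta} and sum. Precisely, I aim to show there is a rate $c>0$, bounded below uniformly in $h$ for $h$ small, such that
\[
|\delta_n|\le C_2(\varphi,x_0)\,h^2\int_{t_n}^{t_{n+1}}e^{-c(t_N-t)}\,dt,\qquad n\in\{0,\dots,N-1\},
\]
with $C_2(\varphi,x_0)$ as in~\eqref{eq:constantC2}. Summing the geometric-type bound then gives $\sum_{n=0}^{N-1}|\delta_n|\le C_2(\varphi,x_0)h^2\int_0^{t_N}e^{-c(t_N-t)}\,dt\le c^{-1}C_2(\varphi,x_0)h^2$, which is the desired uniform-in-$N$ estimate. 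To bound $\delta_n$, I apply It\^o's formula to $t\mapsto v_T^h(t,\tilde X^h(t))$ on $[t_n,t_{n+1}]$, where $\tilde X^h$ solves~\eqref{eq:tildeX-SDE}, and use the backward Kolmogorov equation~\eqref{eq:Kolmogorov_vh} to eliminate $\partial_t v_T^h$. This yields $\delta_n=\delta_{n,1}+\delta_{n,2}$ with
\[
\delta_{n,1}=\int_{t_n}^{t_{n+1}}\E\bigl[\langle \nabla v_T^h(t,\tilde X^h(t)),\nabla F^h(\tilde X^h(t))-\nabla F(X_n)\rangle\bigr]\,dt,
\]
\[
\delta_{n,2}=\frac h2\int_{t_n}^{t_{n+1}}\E\bigl[\nabla^2 v_T^h(t,\tilde X^h(t)):\bigl(a(t_n,X_n)-a(t,\tilde X^h(t))\bigr)\bigr]\,dt.
\]

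The term $\delta_{n,2}$ is the easier one, since it already carries a prefactor $h$. Writing $a(t_n,X_n)-a(t,\tilde X^h(t))=(a(t_n,X_n)-a(t,X_n))+(a(t,X_n)-a(t,\tilde X^h(t)))$, the first difference is ${\rm O}(t-t_n)$ by the time-Lipschitz property~\eqref{eq:asssigma2Lip} (and the bounded spatial derivatives of $\sigma$ in Assumption~\ref{ass:sigma2}), while the second is controlled by $\|\tilde X^h(t)-X_n\|$. Bounding $\nabla^2 v_T^h$ with~\eqref{eq:lemv2}, using the Cauchy--Schwarz inequality together with the moment bounds~\eqref{eq:tildeX-momentbounds} and increment bounds~\eqref{eq:tildeX-increments} of Lemma~\ref{lem:tildeX}, and integrating $\int_{t_n}^{t_{n+1}}(t-t_n)\,dt={\rm O}(h^2)$, gives $|\delta_{n,2}|\le C_2(\varphi,x_0)h^2\int_{t_n}^{t_{n+1}}e^{-c(t_N-t)}\,dt$, the weight coming from the exponential decay in~\eqref{eq:lemv2}.

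The analysis of $\delta_{n,1}$ is the crux. Recalling $\nabla F^h=\nabla F+\tfrac h4\nabla\|\nabla F\|^2$, I split $\nabla F^h(\tilde X^h(t))-\nabla F(X_n)=(\nabla F(\tilde X^h(t))-\nabla F(X_n))+\tfrac h4\nabla\|\nabla F\|^2(\tilde X^h(t))$ and Taylor expand this quantity and $\nabla v_T^h(t,\cdot)$ around $X_n$, with increment $\Delta:=\tilde X^h(t)-X_n=-\nabla F(X_n)(t-t_n)+\sqrt h\sigma(t_n,X_n)(B(t)-B(t_n))$. Taking the conditional expectation given $\mathcal F_n$ and using $\E[B(t)-B(t_n)\mid\mathcal F_n]=0$ and $\E[(B(t)-B(t_n))(B(t)-B(t_n))^\star\mid\mathcal F_n]=(t-t_n)I_d$, the only contributions that are not already ${\rm O}(h^3)$ after integration are two terms of size $h^2$: the linear Taylor term of $\nabla F$ paired with $\E[\Delta\mid\mathcal F_n]$, giving $-(t-t_n)\langle \nabla v_T^h(t,X_n),\nabla^2F(X_n)\nabla F(X_n)\rangle$, and the modified-drift correction $\tfrac h4\nabla\|\nabla F\|^2(X_n)=\tfrac h2\nabla^2F(X_n)\nabla F(X_n)$, giving $\tfrac h2\langle \nabla v_T^h(t,X_n),\nabla^2F(X_n)\nabla F(X_n)\rangle$. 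These cancel exactly after integration since $\int_{t_n}^{t_{n+1}}(\tfrac h2-(t-t_n))\,dt=0$; this is precisely the purpose of the modified objective $F^h$. The residual of the cancellation, due to the variation of $\nabla v_T^h(t,X_n)$ in $t$ over the step, is ${\rm O}(h^3)$ and is controlled by the bound~\eqref{eq:lemvt} on $D(\partial_tv_T^h)$ from Lemma~\ref{lem:vt}. Every remaining term (the quadratic-in-$\Delta$ Taylor term of $\nabla F$, the contributions of $\nabla^2v_T^h$ and $\nabla^3v_T^h$ paired with $\Delta$, the explicit-$h$ terms, and the Taylor remainders) is ${\rm O}(h^3)$ after integration, because $\E[\|\Delta\|^2\mid\mathcal F_n]={\rm O}(h(t-t_n))$ and higher moments of $\Delta$ are correspondingly small; these are bounded using~\eqref{eq:lemv1}--\eqref{eq:lemv3}, the growth bounds~\eqref{eq:nablaFh}, the boundedness of the derivatives of $F$ up to order five (Assumption~\ref{ass:F2}), and the moment bounds~\eqref{eq:momentbounds-scheme} of Lemma~\ref{lem:momentbounds-scheme}.

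I expect the main obstacle to be the bookkeeping of this expansion: one must check that each non-cancelling term genuinely carries a factor $h^3$ together with an integrable exponential weight $e^{-c(t_N-t)}$ and a polynomial control of degree at most five in $\|x_0-x^\star\|$. The most delicate point is the residual of the leading cancellation, where~\eqref{eq:lemvt} enters: its degree-four growth in $\|y-x^\star\|$, multiplied by the linear growth of $\nabla F$, produces the fifth power of $\|x_0-x^\star\|$ in $C_2(\varphi,x_0)$ of~\eqref{eq:constantC2}, which is precisely why the moment bounds of order $2p=6$ (Lemma~\ref{lem:momentbounds-scheme} with $p=3$) are needed. Choosing $H$ below the thresholds from Lemmas~\ref{lem:momentbounds-scheme}, \ref{lem:tildeX}, \ref{lem:v} and~\ref{lem:vt}, and summing as above, then completes the proof.
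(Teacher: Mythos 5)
Your proposal is correct and takes essentially the same route as the paper's proof: the same It\^o-plus-Kolmogorov decomposition $\delta_n=\delta_{n,1}+\delta_{n,2}$, the same exact cancellation $\int_{t_n}^{t_{n+1}}\bigl(\tfrac{h}{2}-(t-t_n)\bigr)\,dt=0$ between the drift-induced correction and the modified-drift term $\tfrac{h}{4}\nabla\|\nabla F\|^2$ (the paper isolates this as the vanishing term $\delta_{n,1,2,1}$ after conditioning on $\mathcal{F}_{t_n}$ and applying It\^o's formula to $\nabla F(\tilde{X}^h(\cdot))$, which is equivalent to your Taylor-plus-conditional-moments bookkeeping), and the same use of Lemmas~\ref{lem:v}, \ref{lem:vt}, \ref{lem:tildeX} and~\ref{lem:momentbounds-scheme} to produce per-step bounds of the form $C_2(\varphi,x_0)h^2\int_{t_n}^{t_{n+1}}e^{-c(t_N-t)}\,dt$ that sum uniformly in $N$. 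The only divergences are organizational (the paper first splits off $\delta_{n,1,1}$, the time/space variation of $\nabla v_T^h$, before exhibiting the cancellation in $\delta_{n,1,2}$, whereas you expand everything around $X_n$ at once) and a minor moment-bookkeeping detail (the paper's Cauchy--Schwarz argument invokes the threshold $H_4$ of Lemma~\ref{lem:tildeX}, i.e.\ order-$8$ moments, while your order-$6$ count requires a H\"older-exponent adjustment), neither of which changes the substance.
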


In order to prove Proposition~\ref{propo:schemeSDE}, we follow the same approach as in the proof of Proposition~\ref{propo:schemeODE} in Section~\ref{sec:error1}. This approach is sketched at the beginning of Section~\ref{sec:proof2} above, see also Section~\ref{sec:error1} for some comments. Note that in~\eqref{eq:proposchemeSDE} the order of convergence with respect to $h$ is equal to $2$, instead of $1$ in~\eqref{eq:proposchemeODE}. Obtaining an higher order of convergence requires additional arguments in the analysis. In particular, in the proof one considers the solution $v_T^h$ of the backward Kolmorogorov equation~\eqref{eq:Kolmogorov_vh}, associated with the modified stochastic differential equation~\eqref{eq:SDE2}, instead of dealing with the solution $u^0$ of the partial differential equation~\eqref{eq:Kolmogorov_u}. It is worth mentioning that the proof below exploits bounds on derivatives of $v_T^h$ which are uniform with respect to $h$, given by Lemma~\ref{lem:v} and Lemma~\ref{lem:vt}.

\begin{proof}[Proof of Proposition~\ref{propo:schemeSDE}]
Let the function $\varphi:\R^d\to\R$ and the initial value $x_0$ be given. Combining the expressions~\eqref{eq:decomperror21} and~\eqref{eq:decomperror21delta} for the weak error $\E[\varphi(X_N)]-\E[\varphi(\Y^h(Nh))]$, it is sufficient to prove the following claim for the error terms $\delta_n$ defined by~\eqref{eq:delta}: for all $N\in\N^\star$ and $h\in(0,H)$, for all $n\in\{0,\ldots,N-1\}$, one has
\begin{equation}\label{eq:delta-claim}
|\delta_{n}|\le C_2(\varphi,x_0) h^2 \int_{t_n}^{t_{n+1}}e^{-\frac{\mu(t_N-t)}{4}}\,dt,
\end{equation}
with $C_2(\varphi,x_0)$ defined by~\eqref{eq:constantC2} (up to a modification of the multiplicative constant $C\in(0,\infty)$).

Let us prove the claim~\eqref{eq:delta-claim} above. For all $N\in\N^\star$ and $n\in\{0,\ldots,N-1\}$, recall that $t\in[t_n,t_{n+1}]\mapsto \tilde{X}^{h}(t)$ is solution of the stochastic differential equation~\eqref{eq:tildeX-SDE}. Therefore, applying It\^o's formula on the interval $[t_n,t_{n+1}]$ yields the following decomposition of the error term $\delta_n$: one has
\begin{align*}
    \delta_n&=\int_{t_n}^{t_{n+1}}\E[\partial_tv_T^h(t,\Tilde{X}^h(t)]\,dt\\
    &-\int_{t_n}^{t_{n+1}}\E[\langle \nabla v_T^h(t,\Tilde{X}^h(t)),\nabla F(\Tilde{X}^h(t_n))\rangle]\,dt\\
    &+\frac{h}{2}\int_{t_n}^{t_{n+1}}\E[\nabla^2v_T^h(t,\Tilde{X}^h(t)):a(t_n,\Tilde{X}^h(t_n))]\,dt.
\end{align*}
Recall that the function $v_T^h$ is the solution of the backward Kolmogorov equation~\eqref{eq:vh}. This allows us to decomposed $\delta_n$ as
\begin{equation}\label{eq:decomp_delta_n}
\delta_n= \delta_{n,1}+\delta_{n,2},
\end{equation}
where for all $n \in \{0, \dots, N-1\}$ one defines
\begin{align*}
    \delta_{n,1}&= \int_{t_n}^{t_{n+1}}\E[\langle \nabla v_T^h(t,\Tilde{X}^h(t)), \nabla F^h(\Tilde{X}^h(t))-\nabla F(\Tilde{X}^h(t_n))\rangle]\,dt,\\
    \delta_{n,2}&=\frac{h}{2}\int_{t_n}^{t_{n+1}}\E[\nabla^2v_T^h(t,\Tilde{X}^h(t)):(a(t_n,\Tilde{X}^h(t_n))-a(t,\Tilde{X}^h(t)))]\,dt.
\end{align*}
The analysis of the error term $\delta_{n,1}$ is more subtle than the analysis of the error term $\epsilon_{n,1}$ in the proof of Proposition~\ref{propo:schemeODE}, since for $\delta_{n,1}$ one needs to obtain second order convergence with respect to $h$. Let us decompose the error term $\delta_{n,1}$ as
\begin{equation}\label{eq:decomp_delta_n1}
\delta_{n,1}=\delta_{n,1,1}+\delta_{n,1,2},
\end{equation}
where the auxiliary error terms $\delta_{n,1,1}$ and $\delta_{n,1,2}$ are defined for all $n\in\{0,\ldots,N-1\}$ as
\begin{align*}
    \delta_{n,1,1}&= \int_{t_n}^{t_{n+1}}\E[\langle\nabla v_T^h(t,\Tilde{X}^h(t))-\nabla v_T^h(t_n,\Tilde{X}^h(t_n)),\nabla F^h(\Tilde{X}^h(t))-\nabla F(\Tilde{X}^h(t_n))\rangle]\,dt,\\
    %\alpha_{n,1,2}&=\int_{t_n}^{t_{n+1}}\E\left[\langle\nabla v^h_T(t,\Tilde{X}^h(t_n))-\nabla v^h_T(t_n,\Tilde{X}^h(t_n),\nabla F(\Tilde{X}^h(t))-\nabla F(\Tilde{X}^h(t_n))+\frac{h}{4}\nabla||\nabla F||^2(\Tilde{X}^h(t))\rangle\right]\,dt.
    \\
    %\alpha_{n,1,3}&=\int_{t_n}^{t_{n+1}}\E\left[\langle\nabla v^h_T(t_n,\Tilde{X}^h(t_n),\nabla F(\Tilde{X}^h(t))-\nabla F(\Tilde{X}^h(t_n))+\frac{h}{4}\nabla||\nabla F||^2(\Tilde{X}^h(t_n))\rangle\right]\,dt\\
    \delta_{n,1,2}&=\int_{t_n}^{t_{n+1}}\E[\langle\nabla v^h_T(t_n,\Tilde{X}^h(t_n)),\nabla F^h(\Tilde{X}^h(t))-\nabla F(\Tilde{X}^h(t_n))\rangle]\,dt.
\end{align*}

The claim~\eqref{eq:delta-claim} follows from the bounds for $\delta_{n,1,1}$, $\delta_{n,1,2}$ and $\delta_{n,2}$ which are obtained below.

\emph{Upper bounds for $\delta_{n,1,1}$.}

On the one hand, one has
\begin{align*}
\nabla v_T^h(t,\Tilde{X}^h(t))-\nabla v_T^h(t_n,\Tilde{X}^h(t_n))&=\nabla v_T^h(t,\Tilde{X}^h(t))-\nabla v_T^h(t_n,\Tilde{X}^h(t))\\
&+\nabla v_T^h(t_n,\Tilde{X}^h(t))-\nabla v_T^h(t_n,\Tilde{X}^h(t_n)).
\end{align*}
On the other hand, by the definition~\eqref{eq:Fh} of the modified objective function $F^h$, one has
\[
\nabla F^h(\Tilde{X}^h(t))-\nabla F(\Tilde{X}^h(t_n))=\nabla F(\Tilde{X}^h(t))-\nabla F(\Tilde{X}^h(t_n))+\frac{h}{4}\nabla(\|\nabla F\|^2)(\tilde{X}^h(t)),
\]
and recall that $\nabla F$ is globally Lipschitz continuous (see the condition~\eqref{eq:assF1-Lip} from Assumption~\ref{ass:F1}) and that $\nabla(\|\nabla F\|^2)$ safisties the inequality~\eqref{eq:nablaFh}.

Using the inequality~\eqref{eq:lemvt} from Lemma~\ref{lem:vt} on the first order derivative of $\partial_tv_T^h(t,\cdot)$ and the inequality~\eqref{eq:lemv2} from Lemma~\ref{lem:v} on the second order derivative of $v_T^h(t,\cdot)$ with $\lambda=\frac{\mu}{2}$, for all $h\in(0,h_4)$ one has
\begin{align*}
|\delta_{n,1,1}|&\le C(\varphi)\int_{t_n}^{t_{n+1}}e^{-(\frac{\mu}{2}-Ch)(t_N-t)}(t-t_n)\E[\bigl(1+\|\tilde{X}^h(t)-x^\star\|\bigr)^4\|\tilde{X}^h(t)-\tilde{X}^h(t_n)\|]\,dt\\
&+C(\varphi)h\int_{t_n}^{t_{n+1}}e^{-(\frac{\mu}{2}-Ch)(t_N-t)}(t-t_n)\E[\bigl(1+\|\tilde{X}^h(t)-x^\star\|\bigr)^5]\,dt\\
&+C(\varphi)\int_{t_n}^{t_{n+1}}e^{-(\frac{\mu}{2}-Ch)(t_N-t)}\E[\bigl(1+\|\tilde{X}^h(t_n)-x^\star\|+\|\tilde{X}^h(t)-x^\star\|\bigr)^2\|\tilde{X}^h(t)-\tilde{X}^h(t_n)\|^2]\,dt\\
&+C(\varphi)h\int_{t_n}^{t_{n+1}}e^{-(\frac{\mu}{2}-Ch)(t_N-t)}\E[\bigl(1+\|\tilde{X}^h(t_n)-x^\star\|+\|\tilde{X}^h(t)-x^\star\|\bigr)^3\|\tilde{X}^h(t)-\tilde{X}^h(t_n)\|]\,dt,
\end{align*}
where $C(\varphi)=C(\|\nabla\varphi(x^\star)\|+\vvvert\varphi\vvvert_2+\vvvert\varphi\vvvert_3)$ for some positive real number $C\in(0,\infty)$. Moreover, choose $H\in(0,h_4)$ such that $\frac{\mu}{2}-Ch\ge \frac{\mu}{4}$ for all $h\in(0,H)$.

Applying the Cauchy--Schwarz inequality, the moment bounds~\eqref{eq:tildeX-momentbounds} and the increment bounds~\eqref{eq:tildeX-increments} from Lemma~\ref{lem:tildeX}, if $H$ is chosen such that $H\le H_4$ where $H_4$ is given by Lemma~\ref{lem:tildeX}, one obtains the upper bound
\begin{equation}\label{eq:delta_n11}
|\delta_{n,1,1}|\le C_2(\varphi,x_0) h^2 \int_{t_n}^{t_{n+1}}e^{-\frac{\mu(t_N-t)}{4}}\,dt,
\end{equation}
for all $N\in\N^\star$, all $n\in\{0,\ldots,N-1\}$ and all $h\in(0,H)$.

\emph{Upper bounds for $\delta_{n,1,2}$.}

Given the filtration $\bigl(\mathcal{F}_t\bigr)_{t\ge 0}$ introduced in Section~\ref{sec:setting_notation}, for any $n\in\{0,\ldots,N-1\}$, the random variable $\tilde{X}^h(t_n)=X_n$ is $\mathcal{F}_{t_n}$ measurable. Considering the conditional expectation $\E[\cdot|\mathcal{F}_{t_n}]$ then yields the identity
\[
\delta_{n,1,2}=\int_{t_n}^{t_{n+1}}\E[\langle\nabla v^h_T(t_n,\Tilde{X}^h(t_n)),\E[\nabla F^h(\Tilde{X}^h(t))|\mathcal{F}_{t_n}]-\nabla F(\Tilde{X}^h(t_n))\rangle]\,dt.
\]
One thus needs to study the conditional expectation $\E[\nabla F^h(\Tilde{X}^h(t))-\nabla F(\Tilde{X}^h(t_n))|\mathcal{F}_{t_n}]$ for all $t\in[t_n,t_{n+1}]$, which owing to the definition~\eqref{eq:Fh} of the modified objective function $F^h$ can be decomposed as
\begin{align*}
    \E[\nabla F^h(\Tilde{X}^h(t))|\mathcal{F}_{t_n}]-\nabla F(\Tilde{X}^h(t_n))&=
    \E[\nabla F(\Tilde{X}^h(t))|\mathcal{F}_{t_n}]-\nabla F(\Tilde{X}^h(t_n))\\
    &+\frac{h}{4}\E[\nabla(\|\nabla F\|^2)((\Tilde{X}^h(t))|\mathcal{F}_{t_n}]\\
    &=\E[\nabla F(\Tilde{X}^h(t))|\mathcal{F}_{t_n}]-\nabla F(\Tilde{X}^h(t_n))\\
    &+\frac{h}{4}\nabla(\|\nabla F\|^2)(\Tilde{X}^h(t_n))\\
    &+\frac{h}{4}\E[\nabla(\|\nabla F\|^2)(\Tilde{X}^h(t))|\mathcal{F}_{t_n}]-\frac{h}{4}\nabla(\|\nabla F\|^2)(\Tilde{X}^h(t_n)).
\end{align*}
Let us consider the first term of the right-hand side above. Recall that $t\in[t_n,t_{n+1}]\mapsto \tilde{X}^{h}(t)$ is solution of the stochastic differential equation~\eqref{eq:tildeX-SDE}. Therefore, for all $t\in[t_n,t_{n+1}]$ applying It\^o's formula on the interval $[t_n,t]$ gives
\begin{align*}
\E[\nabla F(\Tilde{X}^h(t))|\mathcal{F}_{t_n}]&-\nabla F(\Tilde{X}^h(t_n))=-\E[\int_{t_n}^{t}\nabla^2F(\Tilde{X}^h(s))\nabla F(X_n)\,ds|\mathcal{F}_{t_n}]\\
&+\frac{h}{2}\sum_{k,\ell=1}^{d}\E[\int_{t_n}^{t}D^3 F(\Tilde{X}^h(s)).\bigl(e_\ell,\sigma(t_n,X_n)e_k,\sigma(t_n,X_n)e_k\bigr)e_\ell \,ds|\mathcal{F}_{t_n}].
%&+\frac{h}{2}\E[\int_{t_n}^{t}D^2(\nabla F(\Tilde{X}^h(s))):a(t_n,X_n)ds|\mathcal{F}_{t_n}].
\end{align*}
Moreover, the first term may be written as
\begin{align*}
-\E[\int_{t_n}^{t}(\nabla^2F(\Tilde{X}^h(s))\nabla F(X_n)\,ds|\mathcal{F}_{t_n}]&=-\E[\int_{t_n}^{t}\bigl(\nabla^2F(\Tilde{X}^h(s))-\nabla^2F(\Tilde{X}^h(t_n))\bigr)\nabla F(X_n)\,ds|\mathcal{F}_{t_n}]\\
&-(t-t_n)\nabla^2F(\Tilde{X}^h(t_n))\nabla F(X_n).
\end{align*}
Observe that one has the identity
\[
\nabla^2F(\Tilde{X}^h(t_n))\nabla F(X_n)=\nabla^2F(\Tilde{X}^h(t_n))\nabla F(\Tilde{X}^h(t_n))=\frac12\nabla(\|\nabla F\|^2)(\Tilde{X}^h(t_n)).
\]
As a result, using the tower property of conditional expectation, the error term $\delta_{n,1,2}$ is decomposed as
\begin{equation}\label{eq:decomp_delta_n12}
\delta_{n,1,2}=\delta_{n,1,2,1}+\delta_{n,1,2,2}+\delta_{n,1,2,3}+\delta_{n,1,2,4},
\end{equation}
with the auxiliary error terms defined by
\begin{align*}
\delta_{n,1,2,1}&=\int_{t_n}^{t_{n+1}}\Bigl(\frac{h}{4}-\frac{(t-t_n)}{2}\Bigr)\,dt \E[\langle\nabla v^h_T(t_n,\Tilde{X}^h(t_n)),\nabla(\|\nabla F\|^2)(\Tilde{X}^h(t_n))\rangle],\\
\delta_{n,1,2,2}&=-\int_{t_n}^{t_{n+1}}\E[\langle\nabla v^h_T(t_n,\Tilde{X}^h(t_n)),\int_{t_n}^{t}\bigl(\nabla^2F(\Tilde{X}^h(s))-\nabla^2F(\Tilde{X}^h(t_n))\bigr)\nabla F(X_n)ds\rangle]\,dt,\\
\delta_{n,1,2,3}&=\frac{h}{4}\int_{t_n}^{t_{n+1}}\E[\langle\nabla v^h_T(t_n,\Tilde{X}^h(t_n)),\nabla(\|\nabla F\|^2)(\Tilde{X}^h(t))-\nabla(\|\nabla F\|^2)(\Tilde{X}^h(t_n))\rangle]\,dt,\\
\delta_{n,1,2,4}&=\frac{h}{2}\sum_{k,\ell=1}^{d}\int_{t_n}^{t_{n+1}}\int_{t_n}^{t}\E[D^3 F(\Tilde{X}^h(s)).\bigl(\nabla v^h_T(t_n,\Tilde{X}^h(t_n)),\sigma(t_n,X_n)e_k,\sigma(t_n,X_n)e_k\bigr)] \,ds \,dt.
\end{align*}
The key observation is that the first auxiliary error term $\delta_{n,1,2,1}$ vanishes, indeed it is straightforward to check that
\[
\int_{t_n}^{t_{n+1}}\frac{(t-t_n)}{2}\,dt=\frac{(t_{n+1}-t_n)^2}{4}=\frac{h^2}{4}=\int_{t_n}^{t_{n+1}}\frac{h}{4}\,dt.
\]
The other auxiliary error terms appearing in the right-hand side of the decomposition~\eqref{eq:decomp_delta_n12} of $\delta_{n,1,2}$ can then be treated as follows.

First, recall that the third order derivative of the objective function $F$ is bounded, owing to the basic Assumption~\ref{ass:F1}, and $\nabla F$ has at most linear growth, see~\eqref{eq:assF1-growth}. As a result, using the inequality~\eqref{eq:lemv1} from Lemma~\ref{lem:v} on the first order derivative of $v_T^h(t,\cdot)$, for all $h\in(0,h_1)$ one has
\[
|\delta_{n,1,2,2}|\le C(\varphi)\int_{t_n}^{t_{n+1}}\int_{t_n}^{t_{n+1}}e^{-(\mu-Ch)(t_N-t)}\E[(1+\|\tilde{X}^h(t_n)-x^\star\|)\|\tilde{X}^h(s)-\tilde{X}^h(t_n)\|\|X_n-x^\star\|]\,ds\,dt,
\]
where $C(\varphi)=C(\|\nabla\varphi(x^\star)\|+\vvvert\varphi\vvvert_2)$ for some positive real number $C\in(0,\infty)$. Moreover, choose $H\in(0,h_1)$ such that $\mu-Ch\ge \frac{\mu}{4}$ for all $h\in(0,H)$.

Applying the Cauchy--Schwarz inequality, the moment bounds~\eqref{eq:tildeX-momentbounds} and the increment bounds~\eqref{eq:tildeX-increments} from Lemma~\ref{lem:tildeX}, if $H$ is chosen such that $H\le H_2$ where $H_2$ is given by Lemma~\ref{lem:tildeX}, one obtains the upper bound
\begin{equation}\label{eq:delta_n122}
|\delta_{n,1,2,2}|\le C_2(\varphi,x_0) h^2 \int_{t_n}^{t_{n+1}}e^{-\frac{\mu(t_N-t)}{4}}\,dt,
\end{equation}
for all $N\in\N^\star$, all $n\in\{0,\ldots,N-1\}$ and all $h\in(0,H)$.

Second, recall that the mapping $\nabla(\|\nabla F\|^2)$ satisfies the local Lipschitz continuity property~\eqref{eq:assF2-LocLip}. As a result, using the inequality~\eqref{eq:lemv1} from Lemma~\ref{lem:v} on the first order derivative of $v_T^h(t,\cdot)$, for all $h\in(0,h_1)$ one has
\[
|\delta_{n,1,2,3}|\le C(\varphi)h\int_{t_n}^{t_{n+1}}e^{-(\mu-Ch)(t_N-t_n)}\E[(1+\|\tilde{X}^h(t_n)-x^\star\|+\|\tilde{X}^h(t)-x^\star\|)^2\|\tilde{X}^h(s)-\tilde{X}^h(t_n)\|]\,dt,
\]
where $C(\varphi)$ is defined as above. Moreover, choose $H\in(0,h_1)$ such that $\mu-Ch\ge \frac{\mu}{4}$ for all $h\in(0,H)$.

Applying the Cauchy--Schwarz inequality, the moment bounds~\eqref{eq:tildeX-momentbounds} and the increment bounds~\eqref{eq:tildeX-increments} from Lemma~\ref{lem:tildeX}, if $H$ is chosen such that $H\le H_2$ where $H_2$ is given by Lemma~\ref{lem:tildeX}, one obtains the upper bound
\begin{equation}\label{eq:delta_n123}
|\delta_{n,1,2,3}|\le C_2(\varphi,x_0) h^2 \int_{t_n}^{t_{n+1}}e^{-\frac{\mu(t_N-t)}{4}}\,dt,
\end{equation}
for all $N\in\N^\star$, all $n\in\{0,\ldots,N-1\}$ and all $h\in(0,H)$.

Finally, recall that the third order derivative of the objective function $F$ is bounded, owing to the basic Assumption~\ref{ass:F1}, and the diffusion coefficient $\sigma$ has at most linear growth, see~\eqref{eq:asssigma1growth} from the basic Assumption~\ref{ass:sigma1}. As a result, using the inequality~\eqref{eq:lemv1} from Lemma~\ref{lem:v} on the first order derivative of $v_T^h(t,\cdot)$, for all $h\in(0,h_1)$ one has
\[
|\delta_{n,1,2,4}|\le C(\varphi)h^2\int_{t_n}^{t_{n+1}}e^{-(\mu-Ch)(t_N-t_n)}\E[(1+\|X_n-x^\star\|)^3]\,dt,
\]
where $C(\varphi)$ is defined as above. Moreover, choose $H\in(0,h_1)$ such that $\mu-Ch\ge \frac{\mu}{4}$ for all $h\in(0,H)$.

Applying the moment bounds~\eqref{eq:momentbounds-scheme} from Lemma~\ref{lem:momentbounds-scheme}, if $H$ is chosen such that $H\le H_2$ where $H_2$ is given by Lemma~\ref{lem:tildeX}, one obtains the upper bound
\begin{equation}\label{eq:delta_n124}
|\delta_{n,1,2,4}|\le C_2(\varphi,x_0) h^2 \int_{t_n}^{t_{n+1}}e^{-\frac{\mu(t_N-t)}{4}}\,dt,
\end{equation}
for all $N\in\N^\star$, all $n\in\{0,\ldots,N-1\}$ and all $h\in(0,H)$.

Combining the upper bounds~\eqref{eq:delta_n122}, \eqref{eq:delta_n123} and~\eqref{eq:delta_n124} on the error terms $\delta_{n,1,2,2}$, $\delta_{n,1,2,3}$ and $\delta_{n,1,2,4}$, and recalling that the error term $\delta_{n,1,2,1}=0$ vanishes, owing to the decomposition~\eqref{eq:decomp_delta_n12} one obtains the following upper bound for the error term $\delta_{n,1,2}$: one has 
\begin{equation}\label{eq:delta_n12}
|\delta_{n,1,2}|\le C_2(\varphi,x_0) h^2 \int_{t_n}^{t_{n+1}}e^{-\frac{\mu(t_N-t)}{4}}\,dt,
\end{equation}
for all $N\in\N^\star$, all $n\in\{0,\ldots,N-1\}$ and all $h\in(0,H)$.

\emph{Upper bounds for $\delta_{n,1}$.}

Combining the upper bounds~\eqref{eq:delta_n11} and~\eqref{eq:delta_n12} on the error terms $\delta_{n,1,1}$ and $\delta_{n,1,2}$, owing to the decomposition~\eqref{eq:decomp_delta_n1} of the error term $\delta_{n,1}$ one obtains the following upper bound for the error term $\delta_{n,1}$: one has 
\begin{equation}\label{eq:delta_n1}
|\delta_{n,1}|\le C_2(\varphi,x_0) h^2 \int_{t_n}^{t_{n+1}}e^{-\frac{\mu(t_N-t)}{4}}\,dt,
\end{equation}
for all $N\in\N^\star$, all $n\in\{0,\ldots,N-1\}$ and all $h\in(0,H)$.

\emph{Upper bounds for $\delta_{n,2}$.}

Let $e_1,\ldots,e_d$ denote an arbitrary orthonormal system of $\R^d$. The error term $\delta_{n,2}$ is written as
\begin{align*}
\delta_{n,2}&=\frac{h}{2}\int_{t_n}^{t_{n+1}}\E[\sum_{j=1}^{d}D^2v_T^h(t,\Tilde{X}^h(t)).\bigl(\sigma(t_n,\Tilde{X}^h(t_n))e_j,\sigma(t_n,\Tilde{X}^h(t_n))e_j\bigr)]\,dt\\
&-\frac{h}{2}\int_{t_n}^{t_{n+1}}\E[\sum_{j=1}^{d}D^2v_T^h(t,\Tilde{X}^h(t)).\bigl(\sigma(t,\Tilde{X}^h(t))e_j,\sigma(t,\Tilde{X}^h(t))e_j\bigr)]\,dt\\
&=\frac{h}{2}\int_{t_n}^{t_{n+1}}\E[\sum_{j=1}^{d}D^2v_T^h(t,\Tilde{X}^h(t)).\bigl((\sigma(t_n,\Tilde{X}^h(t_n))-\sigma(t,\Tilde{X}^h(t)))e_j,(\sigma(t_n,\Tilde{X}^h(t_n))+\sigma(t,\Tilde{X}^h(t)))e_j\bigr)]\,dt.
\end{align*}
Observe that one has
\[
\sigma(t_n,\Tilde{X}^h(t_n))-\sigma(t,\Tilde{X}^h(t))=\sigma(t_n,\Tilde{X}^h(t_n))-\sigma(t,\Tilde{X}^h(t_n))+\sigma(t,\Tilde{X}^h(t_n))-\sigma(t,\Tilde{X}^h(t)).
\]
Recall, from Assumption~\ref{ass:sigma1} on the diffusion coefficient, that for all $x\in\R^d$ the mapping $\sigma(\cdot,x)$ is globally Lipschitz continuous, and that for all $t\in[0,\infty)$ the mapping $\sigma(t,\cdot)$ is globally Lipschitz continuous. Moreover, recall the linear growth property~\eqref{eq:asssigma1growth} for $\sigma$.

Using the inequality~\eqref{eq:lemv2} from Lemma~\ref{lem:v} on the second order derivative of $v_T^h(t,\cdot)$ with $\lambda=\frac{\mu}{2}$, for all $h\in(0,h_2)$ one has
\begin{align*}
|\delta_{n,2}|&\le C(\varphi)h\int_{t_n}^{t_{n+1}}e^{-(\frac{\mu}{2}-Ch)(t_N-t)}(t-t_n)\E[G_1(\tilde{X}^h(t_n),\tilde{X}^h(t))]\,dt\\
&+C(\varphi)h\int_{t_n}^{t_{n+1}}e^{-(\frac{\mu}{2}-Ch)(t_N-t)}\E[G_2(\tilde{X}^h(t_n),\tilde{X}^h(t))]\,dt,
\end{align*}
with auxiliary functions $G_1$ and $G_2$ defined by
\begin{align*}
G_1(x_1,x_2)&=(1+\|x_2-x^\star\|^2)(1+\|x_1-x^\star\|+\|x_2-x^\star\|),\\
G_2(x_1,x_2)&=(1+\|x_2-x^\star\|^2)\|x_2-x_1\|(1+\|x_1-x^\star\|+\|x_2-x^\star\|),
\end{align*}
where $C(\varphi)=C(\|\nabla\varphi(x^\star)\|+\vvvert\varphi\vvvert_2)$ for some positive real number $C\in(0,\infty)$. Moreover, choose $H\in(0,h_2)$ such that $\frac{\mu}{2}-Ch\ge \frac{\mu}{4}$ for all $h\in(0,H)$.

Applying the Cauchy--Schwarz inequality, the moment bounds~\eqref{eq:tildeX-momentbounds} and the increment bounds~\eqref{eq:tildeX-increments} from Lemma~\ref{lem:tildeX}, if $H$ is chosen such that $H\le H_2$ where $H_2$ is given by Lemma~\ref{lem:tildeX}, one obtains the following upper bound for the error term $\delta_{n,2}$: one has
\begin{equation}\label{eq:delta_n2}
|\delta_{n,2}|\le C_2(\varphi,x_0) h^2 \int_{t_n}^{t_{n+1}}e^{-\frac{\mu(t_N-t)}{4}}\,dt,
\end{equation}
for all $N\in\N^\star$, all $n\in\{0,\ldots,N-1\}$ and all $h\in(0,H)$.

\emph{Conclusion.}
Gathering the upper bounds~\eqref{eq:delta_n1} and~\eqref{eq:delta_n2} on the error terms $|\delta_{n,1}|$ and $|\delta_{n,2}|$ and recalling the decomposition~\eqref{eq:decomp_delta_n} of $\delta_n$ then yields the claim~\eqref{eq:delta-claim}.

The proof of the weak error estimate~\eqref{eq:proposchemeSDE} is then straightforward: for all $N\in\N^\star$ one has
\begin{align*}
\big|\E[\varphi(X_N)]-\E[\varphi(\Y^h(Nh))]\big|&\le \sum_{n=0}^{N-1}|\delta_n|\\
&\le C_2(\varphi,x_0) h^2 \sum_{n=0}^{N-1} \int_{t_n}^{t_{n+1}}e^{-\frac{\mu(t_N-t)}{4}}\,dt\\
&\le C_2(\varphi,x_0) h^2 \int_{0}^{t_{N}}e^{-\frac{\mu(t_N-t)}{4}}\,dt\le\frac{4C_2(\varphi,x_0)h^2}{\mu}.
\end{align*}
This concludes the proof of Proposition~\ref{propo:schemeSDE}.
\end{proof}

\subsection{Proof of Theorem~\ref{theo:2}}\label{sec:conclusion-proof2}

We are now in a position to provide the proof of Theorem~\ref{theo:2}.

\begin{proof}[Proof of Theorem~\ref{theo:2}]
First, Let us establish the weak error estimates~\eqref{eq:theo2-weak}. Recall that owing to~\eqref{eq:decomperror2} the error term appearing in the left-hand side of~\eqref{eq:theo2-weak} can be decomposed as
\[
\E[\varphi(X_N)]-\varphi(x^\star)=\E[\varphi(\Y^h(Nh))]-\varphi(x^\star)+\E[\varphi(X_N)]-\E[\varphi(\Y^h(Nh))].
\]
On the one hand, using the inequality~\eqref{eq:ineqDvarphi} and applying the upper bound~\eqref{eq:YT} from Proposition~\ref{propo:YT}, one has
\begin{align*}
\big|\E[\varphi(\Y^h(Nh))]-\varphi(x^\star)\big|&\le C\E\bigl[\bigl(\|\nabla\varphi(x^\star)\|+\vvvert\varphi\vvvert_2\|\Y^h(Nh)-x^\star\|\bigr) \|\Y^h(Nh)-x^\star\|\bigr]\\
&\le C e^{-\mu Nh}\left(1+h\rho(Nh)\right)^{\frac12}\bigl(1+\|x_0-x^\star\|\bigr).
\end{align*}
On the other hand, applying the uniform in time weak error estimates~\eqref{eq:proposchemeSDE} from Proposition~\ref{propo:schemeSDE}, one has for all $N\in\N^\star$ and all $h\in(0,H)$
\[
\big|\E[\varphi(X_N)]-\E[\varphi(\Y^h(Nh))]\big|\le C_2(\varphi,x_0)h^2,
\]
where $C_2(\varphi,x_0)$ is given by~\eqref{eq:constantC2}. Gathering the two upper bounds yields the weak error estimates~\eqref{eq:theo2-weak}.

Second, let us establish the error estimates on the expected residual~\eqref{eq:theo2-residual}. Using the decomposition~\eqref{eq:decomperror2} with $\varphi=F$, one has
\[
\E[F(X_N)]-F(x^\star)=\E[F(\Y^h(Nh))]-F(x^\star)+\E[F(X_N)]-\E[F(\Y^h(Nh))].
\]
On the one hand, using the inequality~\eqref{eq:residual_up} and applying the upper bound~\eqref{eq:YT} from Proposition~\ref{propo:YT}, one has
\[
\E[F(\Y^h(Nh))]-F(x^\star)\le C\E[\|\Y^h(Nh)-x^\star\|^2]\le C\bigl(1+\|x_0-x^\star\|^2\bigr)e^{-2\mu Nh}\left(1+h\rho(Nh)\right)
\]
On the other hand, applying the uniform in time weak error estimates~\eqref{eq:proposchemeSDE} from Proposition~\ref{propo:schemeSDE} with $\varphi=F$, one has for all $N\in\N$ and all $h\in(0,H)$
\[
\big|\E[F(X_N)]-\E[F(\Y^h(Nh))]\big|\le C\bigl(1+\|x_0-x^\star\|^5\bigr)h^2.
\]
Gathering the two upper bounds yields the error estimates on the expected residual~\eqref{eq:theo2-residual}.

Finally, let us establish the strong error estimates~\eqref{eq:theo2-strong}. Owing to the inequality~\eqref{eq:residual_low}, one obtains
\[
\E[\|X_N-x^\star\|^2]\le\frac{2}{\mu}\bigl(\E[F(X_N)]-F(x^\star)\bigr)
\]
and it then suffices to apply the error estimates on the expected residual~\eqref{eq:theo2-residual} to get~\eqref{eq:theo2-strong}.

The proof of Theorem~\ref{theo:2} is thus completed.

\end{proof}

\begin{appendix}\label{sec:app}
\section{Proof of Lemma~\ref{lem:u}}\label{app:lemu}

The objective of this section is to prove the estimates~\eqref{eq:lemu1} and~\eqref{eq:lemu2} on the first and second order derivatives of the mapping $u^0(t,\cdot)$ defined by~\eqref{eq:u}. Let us start with some preliminary results.

Repeating the proof of the inequality~\eqref{eq:cvX0} with an arbitrary initial value $x\in\R^d$, it is straightforward to obtain the following bound on the solution $\bigl(\X_x^0(t)\bigr)_{t\ge 0}$ of the ordinary differential equation~\eqref{eq:ODE1} with arbitrary initial value $\X_x^0(0)=x\in\R^d$: one has
\begin{equation}\label{eq:boundX0}
\underset{t\ge 0}\sup~\underset{x\in\R^d}\sup~\frac{\|\X_x^0(t)\|}{1+\|x-x^\star\|}<\infty.
\end{equation}

In this section, we let $\varphi:\R^d\to\R$ be a mapping of class $\mathcal{C}^2$ with a bounded second order derivative. Recall that its first order derivative is not bounded but it satisfies the inequality~\eqref{eq:ineqDvarphi}.

To simplify the notation in the proof below, let $b(x)=-\nabla F(x)$ for all $x\in\R^d$. As a result one has $Db(x).k=-\nabla^2F(x)k$ for all $x\in\R^d$ and $k\in\R^d$. Applying the equivalent formulation~\eqref{eq:assF1-muconvex-bis} of the $\mu$-convexity condition~\eqref{eq:assF1-muconvex} from Assumption~\ref{ass:F1}, then one has the inequality
\begin{equation}\label{eq:muconvex_bis-b}
\langle Db(x).k,k\rangle \le -\mu\|k\|^2,\quad \forall~x\in\R^d,k\in\R^d.
\end{equation}

The proof of Lemma~\ref{lem:u} proceeds by proving the inequalities~\eqref{eq:lemu1} and~\eqref{eq:lemu2}.

\begin{proof}[Proof of the inequality~\eqref{eq:lemu1}]
Owing to the definition~\eqref{eq:u} of the mapping $u^0$, for all $t\ge 0$, $x\in\R^d$ and $k\in\R^d$, one has
\[
Du^0(t,x).k=D\varphi(\X_x^0(t)).\eta_k^{0}(t),
\]
where $t\ge 0\mapsto \eta_{k}^{0}(t)\in\R^d$ is the solution of the linear differential equation
\[
\left\lbrace
\begin{aligned}
&\frac{d\eta_{k}^{0}(t)}{dt}=Db(\X_x^0(t)).\eta_{k}^{0}(t),\quad \forall~t\ge 0,\\
&\eta_{k}^{0}(0)=k.
\end{aligned}
\right.
\]
Therefore owing to the inequality~\eqref{eq:ineqDvarphi}, one obtains for all $t\ge 0$
\begin{equation}\label{eq:ineqD1u0}
|Du^0(t,x).k|\le \Bigl(\|\nabla\varphi(x^\star)\|+\vvvert\varphi\vvvert_2\|\X_x^0(t)-x^\star\|\Bigr)\|\eta_k^{0}(t)\|,
\end{equation}
and one thus needs to prove an upper bound for $\|\eta_k^{0}(t)\|$. Employing the inequality~\eqref{eq:muconvex_bis-b} yields, for all $t\ge 0$,
\[
\frac12\frac{d\|\eta_{k}^{0}(t)\|^2}{dt}=\langle \frac{d\eta_{k}^{0}(t)}{dt},\eta_{k}^{0}(t)\rangle=\langle Db(\X_x^0(t)).\eta_{k}^{0}(t),\eta_{k}^{0}(t)\rangle \le -\mu \|\eta_{k}^{0}(t)\|^2,
\]
and applying the Gr\"onwall lemma, one then obtains the upper bound
\begin{equation}\label{eq:boundeta0}
\|\eta_{k}^{0}(t)\|\le e^{-\mu t}\|k\|,\quad \forall~t\ge 0.
\end{equation}
Combining the inequalities~\eqref{eq:boundX0} and~\eqref{eq:boundeta0} with the upper bound~\eqref{eq:ineqD1u0} for $|Du^0(t,x).k|$ above, there exists a positive real number $C\in(0,\infty)$ such that for all $x,k\in\R^d$ and all $t\ge 0$ one has
\[
|Du^0(t,x).k|\le C\bigl(\|\nabla \varphi(x^\star)\|+\vvvert\varphi\vvvert_2\bigr)(1+\|x-x^\star\|)e^{-\mu t}\|k\|.
\]
The proof of the inequality~\eqref{eq:lemu1} is thus completed.
\end{proof}

\begin{proof}[Proof of the inequality~\eqref{eq:lemu2}]
Owing to the definition~\eqref{eq:u} of the mapping $u^0$, for all $t\ge 0$, $x\in\R^d$ and $k_1,k_2\in\R^d$, one has
\[
D^2 u^0(t,x).(k_1,k_2)=D\varphi(\X_x^0(t)).\zeta_{k_1,k_2}^{0}(t)+D^2\varphi(\X_x^0(t)).(\eta_{k_1}^{0}(t),\eta_{k_2}^{0}(t)),
\]
where $t\ge 0\mapsto \zeta_{k_1,k_2}^{0}(t)\in\R^d$ is the solution of the linear differential equation
\[
\left\lbrace
\begin{aligned}
&\frac{d\zeta_{k_1,k_2}^{0}(t)}{dt}=Db(\X_x^0(t)).\zeta_{k_1,k_2}^{0}(t)+D^2b(\X_x^0(t)).(\eta_{k_1}^{0}(t),\eta_{k_2}^{0}(t)),\quad \forall~t\ge 0,\\
&\zeta_{k_1,k_2}^{0}(0)=0.
\end{aligned}
\right.
\]
Since $\varphi$ is of class $\mathcal{C}^2$ with bounded second order derivative, using the inequality~\eqref{eq:ineqDvarphi}, one obtains
\begin{equation}\label{eq:ineqD2u0}
|D^2u^0(t,x).(k_1,k_2)|\le \Bigl(\|\nabla \varphi(x^\star)\|+\vvvert\varphi\vvvert_2\|\X_x^0(t)-x^\star\|\Bigr)\|\zeta_{k_1,k_2}^{0}(t)\|+\vvvert\varphi\vvvert_2\|\eta_{k_1}^{0}(t)\|\|\eta_{k_2}^{0}(t)\|.
\end{equation}
Upper bounds for $\|\X_x^0(t)-x^\star\|$ and $\|\eta_{k_j}^{0}(t)\|$ ($j=1,2$) are given by~\eqref{eq:boundX0} and~\eqref{eq:boundeta0} respectively. It thus remains to prove an upper bound for $\|\zeta_{k_1,k_2}^{0}(t)\|$.

For all $t\ge 0$, one has
\begin{align*}
\frac12\frac{d\|\zeta_{k_1,k_2}^{0}(t)\|^2}{dt}&=\langle \frac{d\zeta_{k_1,k_2}^{0}(t)}{dt},\zeta_{k_1,k_2}^{0}(t)\rangle\\
&=\langle Db(\X_x^0(t)).\zeta_{k_1,k_2}^{0}(t),\zeta_{k_1,k_2}^{0}(t)\rangle+\langle D^2b(\X_x^{0}(t)).(\eta_{k_1}^{0}(t),\eta_{k_2}^{0}(t)),\zeta_{k_1,k_2}^{0}(t)\rangle.
\end{align*}
Using the inequality~\eqref{eq:muconvex_bis-b}, the boundedness of the second order derivative of $b$ (the third order derivative of $F$ is bounded, owing to Assumption~\ref{ass:F1}), and the Cauchy--Schwarz and Young inequalities, there exists a positive real number $C\in(0,\infty)$ such that for all $t\ge 0$ one has
\begin{align*}
\frac12\frac{d\|\zeta_{k_1,k_2}^{0}(t)\|^2}{dt}&\le -\mu\|\zeta_{k_1,k_2}^{0}(t)\|^2+C\|\eta_{k_1}^{0}(t)\|\|\eta_{k_2}^{0}(t)\|\|\zeta_{k_1,k_2}^{0}(t)\|\\
&\le -\frac{\mu}{2}\|\zeta_{k_1,k_2}^{0}(t)\|^2+C\|\eta_{k_1}^{0}(t)\|^2\|\eta_{k_2}^{0}(t)\|^2.
\end{align*}
As a result, applying the Gr\"onwall inequality and using the inequality~\eqref{eq:boundeta0}, for all $t\ge 0$ one obtains
\[
\|\zeta_{k_1,k_2}^{0}(t)\|^2\le C\int_0^t e^{-\mu(t-s)}e^{-4\mu s}\,ds \|k_1\|^2 \|k_2\|^2,
\]
which gives the following inequality: for all $t\ge 0$
\[
\|\zeta_{k_1,k_2}^0(t)\|\le Ce^{-\frac{\mu}{2}t}\|k_1\| \|k_2\|.
\]
Using this inequality and the inequality~\eqref{eq:boundeta0}, one then obtains another upper bound: for all $t\ge 0$ one has
\begin{align*}
\frac12\frac{d\|\zeta_{k_1,k_2}^{0}(t)\|^2}{dt}&\le -\mu\|\zeta_{k_1,k_2}^{0}(t)\|^2+C\|\eta_{k_1}^{0}(t)\|\|\eta_{k_2}^{0}(t)\|\|\zeta_{k_1,k_2}^{0}(t)\|\\
&\le -\mu\|\zeta_{k_1,k_2}^{0}(t)\|^2+C e^{-(2\mu+\frac{\mu}{2})t}\|k_1\|^2\|k_2\|^2.
\end{align*}
Applying again the Gr\"onwall inequality, for all $t\ge 0$ one obtains
\[
\|\zeta_{k_1,k_2}^{0}(t)\|^2\le C\int_0^t e^{-2\mu (t-s)}e^{-2(\mu+\frac{\mu}{2})s}\,ds \|k_1\|^2 \|k_2\|^2=C e^{-2\mu t}\int_0^t e^{-\mu s}\,ds \|k_1\|^2 \|k_2\|^2.
\]
As a result one obtains the following inequality: for all $t\ge 0$
\begin{equation}\label{eq:boundzeta0}
\|\zeta_{k_1,k_2}^0(t)\|\le C e^{-\mu t}\|k_1\| \|k_2\|.
\end{equation}

Combining the inequalities~\eqref{eq:boundX0},~\eqref{eq:boundeta0} and~\eqref{eq:boundzeta0} with the upper bound~\eqref{eq:ineqD2u0} for $|D^2u^0(t,x).(k_1,k_2)|$ above, as a result there exists a positive real number $C\in(0,\infty)$ such that for all $x,k_1,k_2\in\R^d$ one has
\[
|D^2u^0(t,x).(k_1,k_2)|\le C\bigl(\|\nabla \varphi(x^\star)\|+\vvvert\varphi\vvvert_2\bigr)(1+\|x-x^\star\|)e^{-\mu t}\|k_1\|\|k_2\|.
\]
The proof of the inequality~\eqref{eq:lemu2} is thus completed.
\end{proof}

\section{Proof of Lemmas~\ref{lem:v} and~\ref{lem:vt}}\label{app:lemv}

\subsection{Proof of Lemma~\ref{lem:v}}

Like in Section~\ref{app:lemu}, let us start with some preliminary results.

In this proof, let $b^h=-\nabla F^h$. As a result one has $Db^h(y).k=-\nabla^2F^h(y)k$ for all $y\in\R^d$ and $k\in\R^d$. Since the modified objective function $F^h$ is assumed to be $\mu$-convex for all $h\in(0,h_0)$ owing to Assumption~\ref{ass:F2}, one has the inequality
\begin{equation}\label{eq:muconvex_bis-bh}
\langle Db^h(y).k,k\rangle \le -\mu\|k\|^2,\quad \forall~y\in\R^d,k\in\R^d,~h\in(0,h_0).
\end{equation}
Moreover, the second and third order derivatives of $b^h$ have at most polynomial growth, uniformly with respect to $h\in(0,h_0)$: there exists a positive real number $C\in(0,\infty)$ such that for all $h\in(0,h_0)$ and $y\in\R^d$ one has
\begin{align}
&\|D^2b^h(y).(k_1,k_2)\|\le C(1+\|y-x^\star\|)\|k_1\|\|k_2\|,\quad \forall~k_1,k_2\in\R^d,\label{eq:D2bh}\\
&\|D^3b^h(y).(k_1,k_2,k_3)\|\le C(1+\|y-x^\star\|)\|k_1\|\|k_2\|\|k_3\|,\quad \forall~k_1,k_2,k_3\in\R^d.\label{eq:D3bh}
\end{align}

To simplify the notation below, given $k_1,k_2,k_3\in\R^d$, define the set
\begin{equation}\label{eq:ensembleK}
\mathcal{K}(k_1,k_2,k_3)=\{(k_1,k_2,k_3),(k_2,k_3,k_1),(k_3,k_1,k_2)\}.
\end{equation}

The proof below proceeds as the proof of Lemma~\ref{lem:u} in Section~\ref{app:lemu}, with two main differences: one needs to deal with stochastic processes, thus It\^o's formula is employed, and one needs to prove bounds up to the third order derivative of $v^h(t,\cdot)$. In addition, the main effort is devoted to prove bounds which are independent of the parameter $h$ in a range $(0,h_1)$.

\begin{proof}[Proof of the inequality~\eqref{eq:lemv1}]
    Owing to the definition~\eqref{eq:vh} of the mapping $v_T^h$, for all $T\ge t\ge 0$, $y\in \mathbb{R}^d$ and $k\in\R^d$, one has
    \[
    D v_T^h(t,y).k=\E[D\varphi(\Y_y^h(T|t).\eta_{k}^{h}(T|t)],
    \]
    where the process $s\ge t\mapsto\eta_{k}^{h}(s|t)$ is the solution of the linear stochastic differential equation
\begin{equation}\label{eq:etah}
\left\lbrace
\begin{aligned}
&d\eta_{k}^{h }(s|t)=Db^h(\Y_y^h(s|t))\eta_{k}^{h}(s|t)~ds+\sqrt{h}D\sigma(s,\Y_y^h(s|t))\eta_{k}^{h}(s|t)~dB(s),\\
&\eta_{k}^{h}(t|t)=k.
\end{aligned}
\right.
\end{equation}
Using the inequality~\eqref{eq:ineqDvarphi} and the Cauchy--Schwarz inequality, one obtains for all $T\ge t\ge 0$
\begin{equation}\label{eq:ineqD1vh}
|Dv_T^h(t,y).k|\le \Bigl(\|\nabla\varphi(x^\star)\|+\vvvert\varphi\vvvert_2\bigl(\E[\|\Y_y^h(T|t)-x^\star\|^2]\bigr)^{\frac12}\Bigr)\bigl(\E[\|\eta_k^{h}(T|t)\|^2]\bigr)^{\frac12}.
\end{equation}
Moment bounds for $\Y_y^h(T|t)-x^\star$ are given in Lemma~\ref{lem:momentboundsYh}, it thus remains to prove moment bounds for $\eta_k^{h}(T|t)$. Even if it would be sufficient to deal with second order moments to deal with~\eqref{eq:ineqD1vh} above, moment bounds at arbitrary order are proved below since they are instrumental in the analysis of second and third order derivatives of $v_T^h$.

Recall that from Assumption~\ref{ass:sigma1} the mapping $\sigma(s,\cdot)$ is globally Lipschitz continuous on $\R^d$, uniformly with respect to $s\in\R^+$. For all $p\in\N^\star$, applying the It\^o formula to the process $s\ge t\mapsto \|\eta_{k}^{h}(s|t)\|^{2p}$, there exists a positive real number $C_p^{(1)}\in(0,\infty)$ such that for all $s\ge t$ one has
\[
\frac{1}{2p} \frac{d\E[\|\eta_{k}^{h}(s|t)\|^{2p}]}{ds}\le \E[\langle Db^h(\Y_y^{h}(s|t)).\eta_{k}^{h}(s|t),\eta_{k}^{h}(s|t)\rangle \|\eta_{k}^{h}(s|t)\|^{2(p-1)}]+C_p^{(1)}h\E[\|\eta_{k}^{h}(s|t)\|^{2p}].
\]
Applying the inequality~\eqref{eq:muconvex_bis-bh} which follows from the $\mu$-convexity of $F^h$, one obtains the following upper bound: for all $s\ge t$ one has
\[
\frac{1}{2p}\frac{d\E[\|\eta_{k}^{h}(s|t)\|^{2p}]}{ds}\le -(\mu-C_p^{(1)}h)\E[\|\eta_{k}^{h}(s|t)\|^{2p}].
\]
Let $h_p^{(1)}\in(0,\frac{\mu}{C_p^{(1)}})$, then for all $h\in(0,h_p^{(1)}]$ one has $\mu-C_p^{(1)}h\ge \mu-C_p^{(1)}h_p^{(1)}>0$. Applying the Gronw\"all inequality and recalling that $\eta_{k}^{h}(t|t)=k$ one obtains the following inequality: for all $s\ge t$ and all $h\in(0,h_p^{(1)}]$ one has
\begin{equation}\label{eq:boundetah}
\E[\|\eta_{k}^{h}(s|t)\|^{2p}]\le e^{-2p(\mu-C_p^{(1)}h)(s-t)}\|k\|^{2p}\le e^{-2p(\mu-C_p^{(1)}h_p^{(1)})(s-t)}\|k\|^{2p}.
\end{equation}
It thus suffices to combine the moment bounds~\eqref{eq:momentboundsYh} and ~\eqref{eq:boundetah} with $p=1$, with the inequality~\eqref{eq:ineqD1vh}, to obtain the inequality~\eqref{eq:lemv1}: there exist positive real number $h_1\in(0,h_0)$ and $C_1\in(0,\infty)$ such that for all $y,k\in\R^d$, all $T\ge t\ge 0$, and all $h\in(0,h_1)$ one has
\[
|Dv_T^h(t,y).k|\le C_1\bigl(\|\nabla \varphi(x^\star)\|+\vvvert\varphi\vvvert_2\bigr)(1+\|y-x^\star\|)e^{-(\mu-C_1h_1)(T-t)}\|k\|.
\]
The proof of the inequality~\eqref{eq:lemv1} is thus completed.
\end{proof}

\begin{proof}[Proof of the inequality~\eqref{eq:lemv2}]
Owing to the definition~\eqref{eq:vh} of the mapping $v_T^h$, for all $T \geq t$, $y \in \mathbb{R}^d$, and $k_1,k_2\in\R^d$, one has
\[
D^2v_T^h(t,x).(k_1,k_2)=\E[D\varphi(\Y_y^h(T|t)).\zeta_{k_1,k_2}^h(T|t)]+\E[D^2\varphi(\Y_y^h(T|t)).(\eta_{k_1}^{h}(T|t),\eta_{k_2}^{h}(T|t))],
\]
where the processes $s\ge t\mapsto \eta_{k_1}^{h}(T|t)$ and $s\ge t\mapsto \eta_{k_2}^{h}(T|t)$ are given by~\eqref{eq:etah} with $k=k_1$ and $k=k_2$ respectively, and where the process $s\ge t\mapsto\zeta_{k_1,k_2}^h(s|t)$ is the solution of the stochastic differential equation
\begin{equation}\label{eq:zetah}
\left\lbrace
\begin{aligned}
&d\zeta_{k_1,k_2}^{h}(s|t)=Db^h(\Y_y^h(s|t).\zeta_{k_1,k_2}^{h}(s)~ds+\sqrt{h}D\sigma(s,\Y_y^h(s|t)).\zeta_{k_1,k_2}^{h}(s|t)~dB(s)\\
&+D^2b^h(\Y_y^h(s|t)).(\eta_{k_1}^{h}(s|t),\eta_{k_2}^{h}(s|t))~ds+\sqrt{h}D^2\sigma(s,\Y_y^h(s|t)).(\eta_{k_1}^{h}(s|t),\eta_{k_2}^{h}(s|t))~dB(s),\\
&\zeta_{k_1,k_2}^{h}(t|t)=0.
\end{aligned}
\right.
\end{equation}
Using the inequality~\eqref{eq:ineqDvarphi} and the Cauchy--Schwarz inequality, one obtains for all $T\ge t\ge 0$
\begin{equation}\label{eq:ineqD2vh}
\begin{aligned}
|D^2v_T^h(t,y).(k_1,k_2)|&\le \Bigl(\|\nabla\varphi(x^\star)\|+\vvvert\varphi\vvvert_2\bigl(\E[\|\Y_y^h(T|t)-x^\star\|^2]\bigr)^{\frac12}\Bigr)\bigl(\E[\|\zeta_{k_1,k_2}^{h}(T|t)\|^2]\bigr)^{\frac12}\\
&+\vvvert\varphi\vvvert_2\bigl(\E[\|\eta_{k_1}^{h}(T|t)\|^2]\bigr)^{\frac12}\bigl(\E[\|\eta_{k_2}^{h}(T|t)\|^2]\bigr)^{\frac12}.
\end{aligned}
\end{equation}
Moment bounds for $\Y_y^h(T|t)-x^\star$ are given by Lemma~\ref{lem:momentboundsYh}, whereas moment bounds for $\eta_{k_1}^{h}(T|t)$ and $\eta_{k_2}^{h}(T|t)$ are given by~\eqref{eq:boundetah} above. It thus remains to prove moment bounds for $\zeta_{k_1,k_2}^{h}(T|t)$.

Recall that the first and second order derivatives of the mapping $\sigma(s,\cdot)$ are bounded owing to Assumption~\ref{ass:sigma2}, uniformly with respect to $s\in[0,\infty)$. Moreover, the second order derivative of the auxiliary mapping $b^h$ has at most linear growth, uniformly with respect to $h\in(0,h_0)$, see~\eqref{eq:D2bh}. For all $p\in\N^\star$, applying the It\^o formula to the process $s\ge t\mapsto \|\zeta_{k_1,k_2}^{h}(s|t)\|^{2p}$, there exists a positive real number $C_p\in(0,\infty)$ such that for all $s\ge t$ one has
\begin{align*}
\frac{1}{2p}\frac{d~\E[\|\zeta_{k_1,k_2}^h(s|t)\|^{2p}]}{ds}&\le \E[\langle Db^h(\Y_y^{h}(s|t)).\zeta_{k_1,k_2}^{h}(s|t),\zeta_{k_1,k_2}^{h}(s|t)\rangle \|\zeta_{k_1,k_2}^{h}(s|t)\|^{2(p-1)}]\\
&+C_p h \E[\|\zeta_{k_1,k_2}^{h}(s|t)\|^{2p}]\\
&+C_p\E[(1+\|\Y_y^h(s|t)-x^\star\|)\|\eta_{k_1}^{h}(s|t)\| \|\eta_{k_2}^{h}(s|t)\| \|\zeta_{k_1,k_2}^{h}(s|t)\|^{2p-1}]\\
&+C_p h \E[\|\eta_{k_1}^{h}(s|t)\|^2 \|\eta_{k_2}^{h}(s|t)\|^2 \|\zeta_{k_1,k_2}^{h}(s|t)\|^{2(p-1)}].
\end{align*}
Assume that $\lambda\in(0,\mu)$. Using the $\mu$-convexity property~\eqref{eq:muconvex_bis-bh} of the modified objective function $F^h$, one has
\[
\E[\langle Db^h(\Y_y^{h}(s|t)).\zeta_{k_1,k_2}^{h}(s|t),\zeta_{k_1,k_2}^{h}(s|t)\rangle \|\zeta_{k_1,k_2}^{h}(s|t)\|^{2(p-1)}]\le -\mu\E[\|\zeta_{k_1,k_2}^{h}(s|t)\|^{2p}].
\]
In addition, using the H\"older and Young inequalities, one obtains the upper bounds
\begin{align*}
\E[(1&+\|\Y_y^h(s|t)-x^\star\|)\|\eta_{k_1}^{h}(s|t)\| \|\eta_{k_2}^{h}(s|t)\| \|\zeta_{k_1,k_2}^{h}(s|t)\|^{2p-1}]\\
&\le \bigl(\E[(1+\|\Y_y^h(s|t)-x^\star\|)^{2p}\|\eta_{k_1}^{h}(s|t)\|^{2p}\|\eta_{k_2}^{h}(s|t)\|^{2p}]\bigr)^{\frac{1}{2p}}\bigl(\E[\|\zeta_{k_1,k_2}^{h}(s|t)\|^{2p}]\bigr)^{\frac{2p-1}{2p}}\\
&\le C_p\E[(1+\|\Y_y^h(s|t)-x^\star\|)^{2p}\|\eta_{k_1}^{h}(s|t)\|^{2p}\|\eta_{k_2}^{h}(s|t)\|^{2p}]+(\mu-\lambda)\E[\|\zeta_{k_1,k_2}^{h}(s|t)\|^{2p}],
\end{align*}
and
\begin{align*}
\E[\|\eta_{k_1}^{h}(s|t)\|^2\|\eta_{k_2}^{h}(s|t)\|^2 & \|\zeta_{k_1,k_2}^{h}(s|t)\|^{2(p-1)}]\\
&\le \bigl(\E[\|\eta_{k_1}^{h}(s|t)\|^{2p}\|\eta_{k_2}^{h}(s|t)\|^{2p}]\bigr)^{\frac{1}{p}} \bigl(\E[\|\zeta_{k_1,k_2}^{h}(s|t)\|^{2p}]\bigr)^{\frac{p-1}{p}}\\
&\le \frac{1}{p}\E[\|\eta_{k_1}^{h}(s|t)\|^{2p}\|\eta_{k_2}^{h}(s|t)\|^{2p}]+\frac{p-1}{p}\E[\|\zeta_{k_1,k_2}^{h}(s|t)\|^{2p}].
\end{align*}
Combining the upper bounds above, there exists a positive real number $C_p^{(2)}\in(0,\infty)$ such that for all $s\ge t$ one has
\begin{align*}
\frac{1}{2p}\frac{d~\E[\|\zeta_{k_1,k_2}^h(s|t)\|^{2p}]}{ds}&\le -(\lambda-C_p^{(2)}h)\E[\|\zeta_{k_1,k_2}^{h}(s|t)\|^{2p}]\\
&+C_p^{(2)}\E[(1+\|\Y_y^h(s|t)-x^\star\|)^{2p}\|\eta_{k_1}^{h}(s|t)\|^{2p}\|\eta_{k_2}^{h}(s|t)\|^{2p}].
\end{align*}
In addition, applying the H\"older inequality and using the moment bounds~\eqref{eq:momentboundsYh} and~\eqref{eq:boundetah}, for all $s\ge t$ one has
\[
\E[(1+\|\Y_y^h(s|t)-x^\star\|)^{2p}\|\eta_{k_1}^{h}(s|t)\|^{2p}\|\eta_{k_2}^{h}(s|t)\|^{2p}]\le C_p^{(2)}(1+\|y-x^\star\|)^{2p}e^{-4p(\mu-C_{2p}^{(1)}h)(s-t)}\|k_1\|^{2p}\|k_2\|^{2p}.
\]
Without loss of generality, it may be assumed that $C_p^{(2)}\ge C_{2p}^{(1)}$. Let $h_p^{(2)}\in(0,\frac{\lambda}{C_p^{(2)}})$, then for all $h\in(0,h_p^{(2)}]$ one has $\lambda-C_p^{(2)}h\ge \lambda-C_p^{(2)}h_p^{(2)}>0$. Applying the Gr\"onwall inequality and recalling that $\zeta_{k_1,k_2}^h(t|t)=0$, one obtains for all $s\ge t$ and all $h\in(0,h_p^{(2)}]$
\begin{equation}\label{eq:boundzetah}
\E[\|\zeta_{k_1,k_2}^{h}(s|t)\|^{2p}]\le C_{p}^{(2)}e^{-2p(\lambda-C_p^{(2)}h)(s-t)}(1+\|y-x^\star\|)^{2p}\|k_1\|^{2p}\|k_2\|^{2p}.
\end{equation}
It thus suffices to combine the moment bounds~\eqref{eq:momentboundsYh},~\eqref{eq:boundetah} and~\eqref{eq:boundzetah} with $p=1$, with the inequality~\eqref{eq:ineqD2vh}, to obtain the inequality~\eqref{eq:lemv2}: for all $\lambda\in(0,\mu)$, there exist positive real numbers $h_2\in(0,h_1)$ and $C_2\in(0,\infty)$ such that for all $y,k_1,k_2\in\R^d$, all $T\ge t\ge 0$, and all $h\in(0,h_2)$ one has
\[
|D^2v_T^h(t,y).(k_1,k_2)|\le C_2\bigl(\|\nabla \varphi(x^\star)\|+\vvvert\varphi\vvvert_2\bigr)(1+\|y-x^\star\|^2)e^{-(\mu-C_2h_2)(T-t)}\|k_1\|\|k_2\|.
\]
The proof of the inequality~\eqref{eq:lemv2} is thus completed.
\end{proof}

\begin{proof}[Proof of the inequality~\eqref{eq:lemv3}]Owing to the definition~\eqref{eq:vh} of the mapping $v_T^h$, for all $T\ge t\ge 0$, $y\in \mathbb{R}^d$, and $k_1,k_2,k_3\in\R^d$, one has
\begin{align*}
D^3v_T^h(t,x).(k_1,k_2,k_3)&=\E[D\varphi(\Y_y^h(T|t)\xi_{k_1,k_2,k_3}^h(T|t))]\\
&+\sum_{(\ell_1,\ell_2,\ell_3)\in \mathcal{K}}\E[D^2\varphi(\Y_y^h(T|t))(\eta_{\ell_1}^{h}(T|t),\zeta_{\ell_2,\ell_3}^{h}(T|t))]\\
%&+\E[D^2\varphi(\Y_y^h(T|t))(\eta_{k_1}^{h}(T|t),\zeta_{k_2,k_3}^{h}(T|t))]\\
%&+\E[D^2\varphi(\Y_y^h(T|t))(\eta_{k_2}^{h}(T|t),\zeta_{k_3,k_1}^{h}(T|t))]\\
%&+\E[D^2\varphi(\Y_y^h(T|t))(\eta_{k_3}^{h}(T|t),\zeta_{k_1,k_2}^{h}(T|t))]\\
&+\E[D^3\varphi(\Y_y^h(T|t))(\eta_{k_1}^h(T|t),\eta_{k_2}^h(T|t),\eta_{k_3}^h(T|t))],
\end{align*}
where the set $\mathcal{K}=\mathcal{K}(k_1,k_2,k_3)$ is defined by~\eqref{eq:ensembleK}, where the processes $s\ge t\mapsto \eta_{\ell}^h(T|t)$ with $\ell\in\{k_1,k_2,k_3\}$ are given by~\eqref{eq:etah}, where the processes $s\ge t\mapsto \zeta_{\ell_1,\ell_2}^{h}(T|t)$ with indices $(\ell_1,\ell_2)\in\{(k_2,k_3),(k_3,k_1),(k_1,k_2)\}$ are given by~\eqref{eq:zetah}, and where the process $s\ge t\mapsto \xi_{k_1,k_2,k_3}^{h}(s|t)$ is the solution of the stochastic differential equation
\begin{equation}\label{eq:xih}
\left\lbrace
\begin{aligned}
d\xi_{k_1,k_2,k_3}^h(s|t)&=Db^h(\Y_y^h(s|t)).\xi_{k_1,k_2,k_3}^h(s|t)~ds+\sqrt{h}D\sigma(s,\Y_y^h(s|t)).\xi_{k_1,k_2,k_3}^h(s|t)~dB(s)\\
&+\sum_{(\ell_1,\ell_2,\ell_3)\in \mathcal{K}}D^2b^h(\Y_y^h(s|t)).(\eta_{\ell_1}^h(s|t),\zeta_{\ell_2,\ell_3}^h(s|t))~ds\\
&+\sum_{(\ell_1,\ell_2,\ell_3)\in \mathcal{K}}\sqrt{h}D^2\sigma(s,\Y_y^h(s|t)).(\eta_{\ell_1}^h(s|t),\zeta_{\ell_2,\ell_3}^h(s|t))~dB(s)\\
%&+D^2b^h(\Y_y^h(s|t)).(\eta_{k_1}^h(s|t),\zeta_{k_2,k_3}^h(s|t))~ds+\sqrt{h}D^2\sigma(s,\Y_y^h(s|t)).(\eta_{k_1}^h(s|t),\zeta_{k_2,k_3}^h(s|t))~dB(s)\\
%&+D^2b^h(\Y_y^h(s|t)).(\eta_{k_2}^h(s|t),\zeta_{k_3,k_1}^h(s|t))~ds+\sqrt{h}D^2\sigma(s,\Y_y^h(s|t)).(\eta_{k_2}^h(s|t),\zeta_{k_3,k_1}^h(s|t))~dB(s)\\
%&+D^2b^h(\Y_y^h(s|t)).(\eta_{k_3}^h(s|t),\zeta_{k_1,k_2}^h(s|t))~ds+\sqrt{h}D^2\sigma(s,\Y_y^h(s|t)).(\eta_{k_3}^h(s|t),\zeta_{k_1,k_2}^h(s|t))~dB(s)\\
&+D^3b^h(\Y_y^h(s|t)).(\eta_{k_1}^h(s|t),\eta_{k_2}^h(s|t),\eta_{k_3}^h(s|t))~ds\\
&+\sqrt{h}D^3\sigma(s,\Y_y^h(s|t)).(\eta_{k_1}^h(s|t),\eta_{k_2}^h(s|t),\eta_{k_3}^h(s|t))~dB(s),\\
\xi_{k_1,k_2,k_3}^h(t|t)&=0.
\end{aligned}
\right.
\end{equation}
Using the inequality~\eqref{eq:ineqDvarphi} and the Cauchy--Schwarz and H\"older inequality, one obtains for all $s\ge t$
\begin{equation}\label{eq:ineqD3vh}
\begin{aligned}
|D^3v_T^h(t,y).(k_1,k_2,k_3)|&\le \Bigl(\|\nabla\varphi(x^\star)\|+\vvvert\varphi\vvvert_2\bigl(\E[\|\Y_y^h(T|t)-x^\star\|^2]\bigr)^{\frac12}\Bigr)\bigl(\E[\|\xi_{k_1,k_2,k_3}^{h}(T|t)\|^2]\bigr)^{\frac12}\\
&+\vvvert\varphi\vvvert_2\sum_{(\ell_1,\ell_2,\ell_3)\in \mathcal{K}}\bigl(\E[\|\eta_{\ell_1}^{h}(T|t)\|^2]\bigr)^{\frac12}\bigl(\E[\|\zeta_{\ell_2,\ell_3}^{h}(T|t)\|^2]\bigr)^{\frac12}\\
%&+\vvvert\varphi\vvvert_2\bigl(\E[\|\eta_{k_1}^{h}(T|t)\|^2]\bigr)^{\frac12}\bigl(\E[\|\zeta_{k_2,k_3}^{h}(T|t)\|^2]\bigr)^{\frac12}\\
%&+\vvvert\varphi\vvvert_2\bigl(\E[\|\eta_{k_2}^{h}(T|t)\|^2]\bigr)^{\frac12}\bigl(\E[\|\zeta_{k_3,k_1}^{h}(T|t)\|^2]\bigr)^{\frac12}\\
%&+\vvvert\varphi\vvvert_2\bigl(\E[\|\eta_{k_3}^{h}(T|t)\|^2]\bigr)^{\frac12}\bigl(\E[\|\zeta_{k_1,k_2}^{h}(T|t)\|^2]\bigr)^{\frac12}\\
&+\vvvert\varphi\vvvert_3\bigl(\E[\|\eta_{k_1}^{h}(T|t)\|^3]\bigr)^{\frac13}\bigl(\E[\|\eta_{k_2}^{h}(T|t)\|^3]\bigr)^{\frac13}\bigl(\E[\|\eta_{k_3}^{h}(T|t)\|^3]\bigr)^{\frac13}.
\end{aligned}
\end{equation}
Moment bounds for $\Y_y^h(T|t)-x^\star$ are given by Lemma~\ref{lem:momentboundsYh}, moment bounds for $\eta_{k_1}^{h}(T|t)$, $\eta_{k_2}^{h}(T|t)$ and $\eta_{k_3}^{h}(T|t)$ are given by~\eqref{eq:boundetah} above, and moment bounds for $\zeta_{k_1,k_2}^{h}(T|t)$, $\zeta_{k_2,k_3}^{h}(T|t)$ and $\zeta_{k_3,k_1}^{h}(T|t)$ are given by~\eqref{eq:boundzetah} above. It thus remains to prove moment bounds for $\xi_{k_1,k_2,k_3}^{h}(T|t)$. It suffices to consider second order moments, i.e. $p=1$.

Recall that the first, second and third order derivatives of the mapping $\sigma(s,\cdot)$ are bounded owing to Assumption~\ref{ass:sigma2}, uniformly with respect to $s\in(0,\infty)$. Moreover, the second and third order derivatives of the auxiliary mapping $b^h$ have at most linear growth, see~\eqref{eq:D2bh} and~\eqref{eq:D3bh}, uniformly with respect to $h\in(0,h_0)$. Applying the It\^o formula to the process $s\ge t\mapsto \|\xi_{k_1,k_2,k_3}^{h}(s|t)\|^{2}$, there exists a positive real number $C\in(0,\infty)$ such that for all $s\ge t$ one has
\begin{align*}
\frac{1}{2}\frac{d~\E[\|\xi_{k_1,k_2,k_3}^h(s|t)\|^2]}{ds}&\le \E[\langle Db^h(\Y_y^{h}(s|t)).\xi_{k_1,k_2,k_3}^{h}(s|t),\xi_{k_1,k_2,k_3}^{h}(s|t)\rangle]\\
&+C h \E[\|\xi_{k_1,k_2,k_3}^{h}(s|t)\|^{2}]\\
&+C\sum_{(\ell_1,\ell_2,\ell_3)\in \mathcal{K}}\E[(1+\|\Y_y^h(s|t)-x^\star\|)\|\eta_{\ell_1}^{h}(s|t)\| \|\zeta_{\ell_2,\ell_3}^{h}(s|t)\| \|\xi_{k_1,k_2,k_3}^{h}(s|t)\|]\\
&+Ch \sum_{(\ell_1,\ell_2,\ell_3)\in \mathcal{K}}\E[\|\eta_{\ell_1}^{h}(s|t)\|^2 \|\zeta_{\ell_2,\ell_3}^{h}(s|t)\|^2]\\
%&+C \E[(1+\|\Y_y^h(s|t)-x^\star\|)\|\eta_{k_1}^{h}(s|t)\| \|\zeta_{k_2,k_3}^{h}(s|t)\| \|\xi_{k_1,k_2,k_3}^{h}(s|t)\|]\\
%&+Ch \E[\|\eta_{k_1}^{h}(s|t)\|^2 \|\zeta_{k_2,k_3}^{h}(s|t)\|^2]\\
%&+C \E[(1+\|\Y_y^h(s|t)-x^\star\|)\|\eta_{k_2}^{h}(s|t)\| \|\zeta_{k_3,k_1}^{h}(s|t)\| \|\xi_{k_1,k_2,k_3}^{h}(s|t)\|]\\
%&+Ch \E[\|\eta_{k_2}^{h}(s|t)\|^2 \|\zeta_{k_3,k_1}^{h}(s|t)\|^2]\\
%&+C \E[(1+\|\Y_y^h(s|t)-x^\star\|)\|\eta_{k_3}^{h}(s|t)\| \|\zeta_{k_1,k_2}^{h}(s|t)\| \|\xi_{k_1,k_2,k_3}^{h}(s|t)\|]\\
%&+C h \E[\|\eta_{k_3}^{h}(s|t)\|^2 \|\zeta_{k_1,k_2}^{h}(s|t)\|^2]\\
&+C \E[(1+\|\Y_y^h(s|t)-x^\star\|)\|\eta_{k_1}^{h}(s|t)\| \|\eta_{k_2}^{h}(s|t)\| \|\eta_{k_3}^{h}(s|t)\| \|\xi_{k_1,k_2,k_3}^{h}(s|t)\|]\\
&+C h \E[\|\eta_{k_1}^{h}(s|t)\|^2 \|\eta_{k_2}^{h}(s|t)\|^2 \|\eta_{k_3}^{h}(s|t)\|^2].
\end{align*}
Assume that $\lambda\in(0,\mu)$. Using the $\mu$-convexity property~\eqref{eq:muconvex_bis-bh} of the modified objective function $F^h$, one has
\[
\E[\langle Db^h(\Y_y^{h}(s|t)).\xi_{k_1,k_2,k_3}^{h}(s|t),\xi_{k_1,k_2,k_3}^{h}(s|t)\rangle ]\le -\mu\E[\|\xi_{k_1,k_2,k_3}^{h}(s|t)\|^{2}].
\]
Using the H\"older and Young inequalities, there exists a positive real number $C_1^{(3)}\in(0,\infty)$ such that for all $s\ge t$ one has
\begin{align*}
\frac{1}{2}\frac{d~\E[\|\xi_{k_1,k_2,k_3}^h(s|t)\|^2]}{ds}&\le -(\lambda-C_1^{(3)}h)\E[\|\xi_{k_1,k_2,k_3}^h(s)\|^2]\\
&+C_1^{(3)}\sum_{(\ell_1,\ell_2,\ell_3)\in \mathcal{K}}\E[(1+\|\Y_y^h(s|t)-x^\star\|)^2\|\eta_{\ell_1}^{h}(s|t)\|^2 \|\zeta_{\ell_2,\ell_3}^{h}(s|t)\|^2]\\
%&+C_1^{(3)}\E[(1+\|\Y_y^h(s|t)-x^\star\|)^2\|\eta_{k_1}^{h}(s|t)\|^2 \|\zeta_{k_2,k_3}^{h}(s|t)\|^2]\\
%&+C_1^{(3)}\E[(1+\|\Y_y^h(s|t)-x^\star\|)^2\|\eta_{k_2}^{h}(s|t)\|^2 \|\zeta_{k_3,k_1}^{h}(s|t)\|^2]\\
%&+C_1^{(3)}\E[(1+\|\Y_y^h(s|t)-x^\star\|)^2\|\eta_{k_3}^{h}(s|t)\|^2 \|\zeta_{k_1,k_2}^{h}(s|t)\|^2]\\
&+C_1^{(3)}\E[(1+\|\Y_y^h(s|t)-x^\star\|)^2\|\eta_{k_1}^{h}(s|t)\|^2 \|\eta_{k_2}^{h}(s|t)\|^2 \|\eta_{k_3}^{h}(s|t)\|^2]
\end{align*}
Applying the H\"older inequality and using the moment bounds~\eqref{eq:momentboundsYh},~\eqref{eq:boundetah} and~\eqref{eq:boundzetah}, one thus obtains the upper bound
\begin{align*}
\frac{1}{2}\frac{d~\E[\|\xi_{k_1,k_2,k_3}^h(s|t)\|^2]}{ds}&\le -(\lambda-C_1^{(3)}h)\E[\|\xi_{k_1,k_2,k_3}^h(s)\|^2]\\
&+C_1^{(3)}e^{-2(\mu+\lambda-(C_3^{(1)}+C_3^{(2)})h)(s-t)}(1+\|y-x^\star\|)^2\|k_1\|^2\|k_2\|^2\|k_3\|^2\\
&+C_1^{(3)}e^{-3(\mu-C_4^{(1)}h)(s-t)}(1+\|y-x^\star\|)^2\|k_1\|^2\|k_2\|^2\|k_3\|^2.
\end{align*}
Without loss of generality it may be assumed that $C_1^{(3)}\ge \max(C_3^{(1)},C_3^{(2)},C_4^{(1)})$. Let $h_1^{(3)}\in(0,\frac{\lambda}{C_1^{(3)}})$, then for all $h\in(0,h_1^{(3)}]$ one has $\lambda-C_1^{(3)}h\ge \lambda-C_1^{(3)}h_1^{(3)}>0$. Applying the Gr\"onwall inequality and recalling that $\xi_{k_1,k_2,k_3}^{h}(t|t)=0$, one obtains for all $h\in(0,h_1^{(3)}]$
\begin{equation}\label{eq:boundxih}
\E[\|\xi_{k_1,k_2,k_3}^{h}(s|t)\|^2]\le C_1^{(3)}e^{-2(\lambda-C_1^{(3)}h)(s-t)}(1+\|y-x^\star\|)^2\|k_1\|^2\|k_2\|^2\|k_3\|^2.
\end{equation}
It suffices to combine the moment bounds~\eqref{eq:momentboundsYh},~\eqref{eq:boundetah},~\eqref{eq:boundzetah} and~\eqref{eq:boundxih} with the inequality~\eqref{eq:ineqD3vh} to obtain the inequality~\eqref{eq:lemv3}: for all $\lambda\in(0,\mu)$, there exist positive real numbers $h_3\in(0,h_2)$ and $C_3\in(0,\infty)$ such that for all $y,k_1,k_2\in\R^d$, all $T\ge t\ge 0$, and all $h\in(0,h_3)$ one has
\[
|D^3v_T^h(t,y).(k_1,k_2,k_3)|\le C_2\bigl(\|\nabla \varphi(x^\star)\|+\vvvert\varphi\vvvert_2+\vvvert\varphi\vvvert_3\bigr)(1+\|y-x^\star\|)^2e^{-(\lambda-C_3h_3)(T-t)}\|k_1\|\|k_2\|\|k_3\|.
\]
The proof of the inequality~\eqref{eq:lemv3} is thus completed.
\end{proof}
%This concludes the proof of Lemma~\ref{lem:v}.

\subsection{Proof of Lemma~\ref{lem:vt}}

Lemma~\ref{lem:vt} provides upper bounds for the first order derivative $D(\partial_tv_T^h)(t,\cdot)$. To prove them, it suffices to express the temporal derivative $\partial_tv_T^h(t,y)$ using the backward Kolmogorov equation~\eqref{eq:Kolmogorov_vh}, to differentiate with respect to $y$, and to apply the upper bounds on the derivatives $D^jv_T^h(t,\cdot)$ of order $j\in\{1,2,3\}$ stated in Lemma~\ref{lem:v}.

\begin{proof}[Proof of Lemma~\ref{lem:vt}]
    Owing to the backward Kolmogorov equation~\eqref{eq:Kolmogorov_vh}, for all $t \in [0, T]$, $y \in \mathbb{R}^d$, one has
    \[
    \partial_tv_T^h(t,y)=Dv_T^h(t,y).\nabla F^h(y)-\frac{h}{2}\sum_{j=1}^{d}D^2v_T^h(t,y).\bigl(\sigma(t,y)e_j,\sigma(t,y)e_j\bigr),
    \]
    where $e_1,\ldots,e_n$ denotes an arbitrary orthonormal system of $\R^d$.
    
    Differentiating with respect to $y\in\R^d$, for all $k\in\R^d$ one obtains
    \begin{align*}
        D(\partial_tv_T^h)&(t,y).k=D^2 F^h(y).\bigl(\nabla v_T^h(t,y),k\bigr)+D^2 v_T^h(t,y).\bigl(\nabla F^h(y),k\bigr)\\
        &-h \sum_{j=1}^{d}D^2v_T^h(t,y).\bigl((D\sigma(t,y).k)e_j,\sigma(t,y)e_j\bigr) -\frac{h}{2}\sum_{j=1}^{d}D^3v_T^h(t,y).\bigl(k,\sigma(t,y)e_j,\sigma(t,y)e_j\bigr). 
    \end{align*}
    It remains to prove upper bounds for the four terms appearing in the right-hand side above. Since the results of Lemma~\ref{lem:v} are employed, it is assumed that $h\in(0,h_3)$ below.% The value of the positive real number $C\in(0,\infty)$ may change from line to line.
    
    For the first term, using the local Lipschitz continuity property~\eqref{eq:localLipFh} of $\nabla F^h$ and to the inequality~\eqref{eq:lemv1} on the first order derivative of $v_T^h(t,\cdot)$, one obtains the upper bounds
    \begin{align*}
    |D^2 F^h(y).\bigl(\nabla v_T^h(t,y),k\bigr)|&\le C\bigl(1+\|y-x^\star\|\bigr)\|\nabla v_T^h(t,y)\| \|k\|\\
    &\le C\bigl(\|\nabla \varphi(x^\star)\|+\vvvert\varphi\vvvert_2\bigr)\bigl(1+\|y-x^\star\|^2\bigr)e^{-(\mu-Ch)(T-t)}\|k\|.
    \end{align*}
    For the second term, observe that combining~\eqref{eq:assF1-growth} and the first inequality from~\eqref{eq:nablaFh}, one has $\|\nabla F^h(y)\|\le C\|y-x^\star\|$. As a result, using the inequality~\eqref{eq:lemv2} on the second order derivative of $v_T^h(t,\cdot)$, one obtains the upper bound
    \[
    |D^2 v_T^h(t,y).\bigl(\nabla F^h(y),k\bigr)|\le C\bigl(\|\nabla \varphi(x^\star)\|+\vvvert\varphi\vvvert_2\bigr)e^{-(\lambda-Ch)(T-t)}\bigl(1+\|y-x^\star\|^3\bigr)\|k\|.
    \]
    For the third term, using the linear growth and Lipschitz continuity properties of the diffusion coefficient $\sigma(t,\cdot)$, see~\eqref{eq:asssigma1growth} and~\eqref{eq:asssigma1Lip}, and the inequality~\eqref{eq:lemv2} on the second order derivative of $v_T^h(t,\cdot)$, one obtains the upper bounds
    \begin{align*}
    |\sum_{j=1}^{d}D^2v_T^h(t,y)&.\bigl((D\sigma(t,y).k)e_j,\sigma(t,y)e_j\bigr)|\\
    &\le C\bigl(\|\nabla \varphi(x^\star)\|+\vvvert\varphi\vvvert_2\bigr)e^{-(\lambda-Ch)(T-t)}\bigl(1+\|y-x^\star\|^2\bigr)\|D\sigma(t,y).k\| \|\sigma(t,y)\|\\
    &\le C\|\varsigma\|_\infty^2 \bigl(\|\nabla \varphi(x^\star)\|+\vvvert\varphi\vvvert_2\bigr)e^{-(\lambda-Ch)(T-t)}\bigl(1+\|y-x^\star\|^3\bigr)\|k\|.
    \end{align*}
    For the fourth term, using the linear growth property~\eqref{eq:asssigma1growth} of the diffusion coefficient $\sigma(t,\cdot)$, and the inequality~\eqref{eq:lemv3} on the third order derivative of $v_T^h(t,\cdot)$, one obtains the upper bounds
    \begin{align*}
    |\sum_{j=1}^{d}D^3v_T^h(t,y)&.\bigl(k,\sigma(t,y)e_j,\sigma(t,y)e_j\bigr)|\\
    &\le C\bigl(\|\nabla \varphi(x^\star)\|+\vvvert\varphi\vvvert_2+\vvvert\varphi\vvvert_3\bigr)e^{-(\lambda-Ch)(T-t)}\bigl(1+\|y-x^\star\|^2\bigr)\|\sigma(t,y)\|^2\\
    &\le C\bigl(\|\nabla \varphi(x^\star)\|+\vvvert\varphi\vvvert_2+\vvvert\varphi\vvvert_3\bigr)e^{-(\lambda-Ch)(T-t)}\bigl(1+\|y-x^\star\|^4\bigr)\|k\|.
    \end{align*}
    Combining the four upper bounds then yields the inequality~\eqref{eq:lemvt}, for a sufficiently large positive real number $C_4\in(0,\infty)$ and a sufficiently small positive real number $(0,h_0)$. The proof of Lemma~\ref{lem:vt} is thus completed.
\end{proof}

\end{appendix}

%\section*{Declarations}
%
%\subsection*{Funding}
%N. En Nabazzi was funded by a PhD grant of the Région Nouvelle Aquitaine and of the Université de Pau et des Pays de l'Adour. Ch. Brehier, M. Dambrine and N. En Nebazzi have been funded by the ANR grant ANR-24-CE40-3786 \emph{Schémas d'optimisation stochastique - dynamiques inertielles et en dimension infinie – SOS2ID}. 
%
%%\subsection*{Competing Interests} 
%%The authors have no relevant financial or non-financial interests to disclose. 
%
%%\subsection*{Author Contributions}
%%All authors contributed to the research. The first draft of the manuscript was written by N. En Nabazzi and all authors commented on previous versions of the manuscript. All authors read and approved the final manuscript.
\end{document}